\newcommand{\der}{\delta}
\newcommand{\hep}{\hat{\varepsilon}}
\newcommand{\iot}{\int_{0}^{t}}
\newcommand{\ist}{\int_{s}^{t}}
\newcommand{\js}{j^{*}}
\newcommand{\1}{{\bf 1}}
\newcommand{\R}{\mathbb R}
\newcommand{\N}{\mathbb N}
\newcommand{\be}{\mathbf{E}}
\newcommand{\cac}{\mathcal C}
\newcommand{\cn}{\mathcal N}
\newcommand{\al}{\alpha}
\newcommand{\ep}{\varepsilon}
\newcommand{\ga}{\gamma}
\newcommand{\gga}{\Gamma}
\newcommand{\ka}{\kappa}
\newcommand{\la}{\lambda}
\newcommand{\si}{\sigma}
\newcommand{\vp}{\varphi}
\newcommand{\lp}{\left(}
\newcommand{\rp}{\right)}
\newcommand{\lc}{\left[}
\newcommand{\rc}{\right]}
\newcommand{\lcl}{\left\{}
\newcommand{\rcl}{\right\}}
\newcommand{\lln}{\left|}
\newcommand{\rrn}{\right|}
\newtheorem{theorem}{Theorem}[section]
\newtheorem{corollary}[theorem]{Corollary}
\newtheorem{hypothesis}[theorem]{Hypothesis}
\newtheorem{lemma}[theorem]{Lemma}
\newtheorem{proposition}[theorem]{Proposition}
\theoremstyle{remark}
\newtheorem{remark}[theorem]{Remark}
\theoremstyle{remark}
\newtheorem{example}[theorem]{Example}
\newcommand{\bean}{\begin{eqnarray*}}
\newcommand{\eean}{\end{eqnarray*}}
\newcommand{\ben}{\begin{enumerate}}
\newcommand{\een}{\end{enumerate}}
\newcommand{\beq}{\begin{equation}}
\newcommand{\eeq}{\end{equation}}
\begin{document}

\title[SDEs with power nonlinearities]{Young differential equations\\ with power type nonlinearities}

\date{\today}

\author[J.A Le\'on \and D. Nualart \and S. Tindel]{Jorge A. Le\'on \and David Nualart \and Samy Tindel}

\address{
Jorge A. Le\'{o}n:
Depto. de Control Autom\'{a}tico, CINVESTAV-IPN, Apartado Postal 14-740, 07000 M\'{e}xico, D.F., Mexico}
\email{jleon@ctrl.cinvestav.mx}

\address{David Nualart: Department of Mathematics, University of Kansas, 405 Snow Hall, Law\-rence, Kansas, USA.}
\email{nualart@ku.edu}

\address{Samy Tindel: Department of Mathematics,
Purdue University,
150 N. University Street,
W. Lafayette, IN 47907,
USA.}
\email{stindel@purdue.edu}

\thanks{J.A. Le\'on was supported by the CONACyT grant 220303}

\thanks{D. Nualart was supported by the NSF grant  DMS1208625 and the ARO grant FED0070445}

\begin{abstract}
In this note we give several methods to construct nontrivial solutions to the equation $dy_{t}=\si(y_{t}) \, dx_{t}$, where $x$ is a $\ga$-H\"older $\R^{d}$-valued signal with $\ga\in(1/2,1)$ and $\si$ is a function behaving like  a power function $|\xi|^{\ka}$, with $\ka\in(0,1)$. In this situation, classical Young integration techniques allow to get existence and uniqueness results whenever $\ga(\ka+1)>1$, while we focus on cases where $\ga(\ka+1)\le 1$. Our analysis then relies on some extensions of Young's integral allowing to cover the situation at hand.
\end{abstract}

\maketitle

\section{Introduction}
\label{sec:intro}

Let $T>0$ be a fixed arbitrary horizon, and consider a noisy function $x:[0,T]\to\R^{d}$  in the H\"older space $\cac^{\ga}([0,T];\, \R^{d})$, with $\ga>1/2$. Let $\si^{1},\ldots,\si^{d}$ be some vector fields on $\R^{m}$, $a$ be an initial data in $\R^{m}$ and consider the following integral equation
\begin{equation}\label{eq:sde-power}
y_{t} = a +  \sum_{j=1}^{d}\int_{0}^{t} \si^{j}(y_{u}) \, dx_{u}^{j},\quad t\in[0,T].
\end{equation}
When $\si^{1},\ldots,\si^{d}$ are smooth enough, equation \eqref{eq:sde-power} can be solved thanks to fractional calculus~\cite{Ly, NR,Za} or Young integration techniques. Extensions of these methods, thanks to the rough paths theory (see e.g \cite{FV-bk,Ly2}), also allow to handle cases of signals with regularity lower than $1/2$.

In the current paper, we are concerned with a different, though very natural problem: can we define and solve equation \eqref{eq:sde-power} for coefficients which are only H\"older continuous? Stated in such a generality the question is still open, but we consider here the special case of a coefficient $\si$ behaving like a power function.

This problem has quite a long story, and a full answer in  the case of a 1-dimensional equation driven by a standard Brownian motion is given in \cite{Lg,WY}. The basic idea on which Watanabe-Yamada's contribution relies, is the following a priori estimate. Consider equation~\eqref{eq:sde-power} driven by a Brownian motion $B$, with a non-linearity $\si(\xi)=|\xi|^{\ka}$ where $\ka>1/2$. Namely, let $y$ be a solution to
\begin{equation}\label{eq:sde-power-brownian}
y_{t} = a + \int_{0}^{t} |y_{u}|^{\ka} \, dB_{u},\quad t\in[0,T],
\end{equation}
where the differential with respect to $B$ is understood in the It\^o sense.
Then obviously the main problem in order to estimate $y$ is its behavior close to 0, since elsewhere $\xi\mapsto |\xi|^{\ka}$ is a Lipschitz function. For $n\ge 1$ we thus consider an approximation $\vp_{n}$ of the function $\xi\mapsto |\xi|$ such that $\vp_{n}\in C_{b}^{2}(\R)$, $\varphi_n \ge 0$  and $\|\vp_{n}^{(2)}\|_{\infty}\le n$. Then applying It\^o's formula to equation~\eqref{eq:sde-power-brownian} we get
\begin{equation}\label{eq:expect-vpn-yt}
\be\lc \vp_{n}(y_{t}) \rc
=
\vp_{n}(a) + \frac12 \int_{0}^{t} \be\lc \vp_{n}^{(2)}(y_{u}) \, |y_{u}|^{2\ka}\rc \, du.
\end{equation}
The right hand side of equation \eqref{eq:expect-vpn-yt} is then controlled by noticing that, whenever $|y_{u}|\le 1/n$, we have $|\vp_{n}^{(2)}(y_{u}) |\, |y_{u}|^{2\ka}\le n^{-(2\ka-1)}$. This quantity converges to 0 as $n\to\infty$, which is the key step in order to control $\be[ \vp_{n}(y_{t}) ]$ in \cite{WY}.

The method described above in order to handle the Brownian case is short and elegant, but fails to give a true intuition of the phenomenon allowing to solve equation \eqref{eq:sde-power} with a power type coefficient. This intuition has been highlighted in \cite{MP,MPS}, though in the much more technical context of the stochastic heat equation. In order to understand the main idea, let us go back to equation \eqref{eq:sde-power} understood in the Young sense. Then two cases can be thought of (we restrict our considerations to 1-dimensional paths in the remainder of the introduction for notational sake):

\noindent
\textit{(i)}
One expects $y$ to be an element of $\cac^{\ga}$, since the equation is driven by $x\in\cac^{\ga}$. This means that $\si(y)$ should lye in $\cac^{\ka\ga}$.
When $\ka$ satisfies $\ka\,\ga+\ga>1$, each integral $\int_{0}^{t} \si(y_{u}) \, dx_{u}$ can thus be defined as a usual Young integral, and equation \eqref{eq:sde-power} is solved thanks to classical methods as in \cite{FV-bk,Ly,Za}.

\noindent
\textit{(ii)}
Let us now consider the case $\ka\,\ga+\ga\le1$.
If one wishes to define the integral $\int_{0}^{t} \si(y_{u}) \, dx_{u}$ properly when $y_{u}$ is close to 0, the heuristic argument is as follows: when $y_{u}$ is small the equation is basically noiseless, so that $\si(y)$ should be considered as a $\cac^{\ka}$-H\"older function  instead of a $\cac^{\ka\ga}$-H\"older function. This means that the expected condition on $\ka$ in order to solve equation~\eqref{eq:sde-power} is just $\ka+\ga>1$.

\noindent
As mentioned above, this strategy has been successfully implemented in \cite{MP,MPS} in a Brownian SPDE context. It  heavily relies  on the regularity gain when $y$ hits 0. In our case, we will follow two directions which are somehow different in their nature: (i) We will see that if $y$ does not hit 0 too sharply, this condition being quantified in an integral way, then the integrals $\int_{0}^{t} \si(y_{u}) \, dx_{u}$ still have a good chance to be defined even if $\ka\,\ga+\ga <1$. One can then construct a  solution of \eqref{eq:sde-power} in this landmark. (ii) Another approach consists in quantifying the regularity gain enforced by equation \eqref{eq:sde-power} when the solution $y$ approaches 0. In this way, one can get some uniform a priori H\"older bounds on  $y$ and invoke some compactness arguments.

To be more specific, we shall proceed as follows:

\noindent
\textbf{(1)}
We start with a general lemma on Young integration. Namely (see Proposition \ref{prop:integral} for a precise statement), we consider $\eta$ such that $(\ka+\eta)\ga>1-\ga$. We also consider a path $y\in\cac^{\ga}$ and a function $\si$ behaving like a power function $|\xi|^{\ka}$. By adding the assumption $|y|^{-1}\in L^{q}([0,\tau])$ with $q=\frac{\eta}{\ga(\ka+\eta)}$, we prove that 
$\int_{0}^{t} \si(y_{u}) \, dx_{u}$ is well defined as a Young-type integral and gives raise to a $\ga$-H\"older function. Notice that we have carried out this part of our program with fractional integration techniques because the calculations are easily expressed in this setting. We can however link the integral we obtain with Riemann sums.

\noindent
\textbf{(2)}
With this integration result in hand, we consider the 1-dimensional version of equation~\eqref{eq:sde-power} and perform a Lamperti-type transformation $y_{t}=\phi^{-1}(x_{t})$, where $\phi(\xi)=\int_{0}^{\xi}[\si(s)]^{-1} ds$. Then we prove that $y$ is a solution to our equation of interest by identifying the Young integral $\int_{0}^{t} \si(y_{u}) \, dx_{u}$ for $y_{t}=\phi^{-1}(x_{t})$. Our result is valid for any $\ka$ such that $\ga(1+\ka)<1$, and we refer to Theorem \ref{th1v1} for a precise statement.

\noindent
\textbf{(3)}
Our basic a priori estimate for \eqref{eq:sde-power} states that whenever a solution $y$ satisfies $|y_{u}|\le 2^{-k}$ for $u$ lying in an interval $I$, then we also have $|y_{t}-y_{s}|$ of order $2^{-\ka k}|t-s|^{\ga}$ for $s,t\in I$. Our regularity gain is thus expressed by the coefficient $2^{-\ka k}$ above. This gain is sufficient to get  to the existence of a $\gamma$-H\"older continuous  solution to equation \eqref{eq:sde-power} in the $d$-dimensional case. This solution vanishes as soon as it hits the origin (see Theorem \ref{thm:exist-multidim}).

\noindent
Summarizing the considerations above, we are able to get existence theorems for equation~\eqref{eq:sde-power} with power type nonlinearities in a wide range of cases. The situation would obviously be clearer if we could get the corresponding pathwise uniqueness results, like in the aforementioned references \cite{Lg,MP,MPS,WY}. However, these articles handle the case of It\^o type equations, for which uniqueness is expected. In our Stratonovich-Young case uniqueness of the solution is ruled out, since both the nontrivial solution we shall construct and the solution $y\equiv 0$ solve equation \eqref{eq:sde-power} when $a=0$. We shall go back to this issue below.

Our paper is structured as follows: an extension of Young's integral related to our power type coefficient is constructed in Section 2. Section 3 deals with its application to the existence of solutions to equation \eqref{eq:sde-power} in dimension 1. The other approach, based on the a priori regularity gain of the solution when it hits 0, is developed in Section \ref{sec:increment-control}. Finally, in Section 5 we discuss the application of these results to the case of stochastic differential equations driven by a  fractional Brownian motion.

\paragraph{\textbf{Notations:}} Throughout the article, we use the following conventions: for 2 quantities $a$ and $b$, we write $a \lesssim b$ if there exists a universal constant $c$ (which might depend on the parameters of the model, such as, $\ga$, $\ka$, $\eta$, $\alpha$, $T$,...) such that $a\le c\,b$.  If $f$ is a vector-valued function defined on an interval $[0,T]$ and $s,t\in [0,T]$,  $\der f_{st}$ denotes  the increment $f_{t}-f_{s}$.

\section{An extension of Young's integral}\label{sec:frac-calc-approach}

This section is devoted to an extension of  Young's integral using  fractional calculus techniques, which will be suitable to handle  equation \eqref{eq:sde-power} with H\"older-type and singular nonlinearities. We shall first recall some general elements of fractional calculus.

\subsection{Elements of fractional calculus}\label{sec:frac-calc}
We restrict this introduction to real-valued functions for notational sake.
Consider $0\le a < b \le  T$ and an $L^{1}([0,T])$-function $f$. For $t\in [a,b]$ and $\al\in(0,1)$ the fractional integrals of $f$ are defined  as
\begin{equation*}
I_{a+}^{\al} f_{t} = \frac{1}{\gga(\al)} \int_{a}^{t} (t-r)^{\al-1} f_{r} \, dr,
\quad\mbox{and}\quad
I_{b-}^{\al} f_{t} = \frac{1}{\gga(\al)} \int_{t}^{b} (r-t)^{\al-1} f_{r} \, dr .
\end{equation*}
For any $p\ge 1$, we denote by $I_{a+}^{\al}(L^{p})$ the image of $L^{p}([a,b])$ by $I_{a+}^{\al}$, and similarly for $I_{b-}^{\al}(L^{p})$.

The inverse of the operators $I_{a+}^{\al}$ and $I_{b-}^{\al}$ are called fractional derivatives, and are defined as follows. For $f\in I_{a+}^{\al}(L^{p})$ 
and $t\in[a,b]$ we set
\begin{equation}\label{def:der+}
D_{a+}^{\al} f_{t}
=L^p-\lim_{\varepsilon\downarrow0}\frac{1}{\Gamma(1-\alpha)}
\lp
\frac{f_{t}}{(t-a)^{\al}}
+ \al \int_{a}^{t-\varepsilon} \frac{f_{t}-f_{r}}{(t-r)^{1+\al}} \, dr
\rp,
\end{equation}
where we use the convention $f_r=0$ on $[a,b]^c$.
In the same way, for $f\in I_{b-}^{\al}(L^{p})$  and $t\in[a,b]$, we set
\begin{equation}\label{def:der-}
D_{b-}^{\al} f_{t}
=L^p-\lim_{\varepsilon\downarrow0}\frac{1}{\Gamma(1-\alpha)}
\lp
\frac{f_{t}}{(b-t)^{\al}}
+ \al \int_{t+\varepsilon}^{b} \frac{f_{t}-f_{r}}{(r-t)^{1+\al}} \, dr
\rp.
\end{equation}
By \cite[Remark 13.2]{Sam} we have  that, for $p> 1$, $f\in I_{a+}^{\al}(L^{p})$ (resp.
$f\in I_{b-}^{\al}(L^{p})$) if and only if $f\in L^{p}([a,b])$ and the limit in the
right-hand side of (\ref{def:der+}) (resp. (\ref{def:der-})) exists. In this case
$f=I^{\alpha}_{a+}(D_{a+}^{\al} f)$ (resp. $f=I^{\alpha}_{b-}(D_{b-}^{\al} f)$).
It is not difficult to see that, as a consequence of the proof of
\cite[Theorem 13.2]{Sam}, the fact that $f\in L^{p}([a,b])$, $\frac{f(\cdot)}
{(\cdot-a)^{\alpha}}$ and $\int_a^{\cdot}\frac{f(\cdot)-f_r}{(\cdot-r)^{1+\alpha}}dr$
(resp. $\frac{f(\cdot)}
{(b-\cdot)^{\alpha}}$ and $\int_{\cdot}^b\frac{f(\cdot)-f_r}{(r-\cdot)^{1+\alpha}}dr$)
belong to $L^p([a,b])$ implies that $f\in I_{a+}^{\al}(L^{p})$ (resp.
$f\in I_{b-}^{\al}(L^{p})$) and 
\begin{equation}\label{eq:def-Da-plus}
D_{a+}^{\al} f_{t}
=\frac{1}{\Gamma(1-\alpha)}
\lp
\frac{f_{t}}{(t-a)^{\al}}
+ \al \int_{a}^{t} \frac{f_{t}-f_{r}}{(t-r)^{1+\al}} \, dr
\rp
\end{equation}
(resp.
$$D_{b-}^{\al} f_{t}
=\frac{1}{\Gamma(1-\alpha)}
\lp
\frac{f_{t}}{(b-t)^{\al}}
+ \al \int_{t}^{b} \frac{f_{t}-f_{r}}{(r-t)^{1+\al}} \, dr
\rp).
$$
Notice that $\cac^{\al+\varepsilon}([a,b])\subset I_{a+}^{\al}(L^{p})$, with
$\varepsilon>0$. In the same manner, we have $\cac^{\al+\varepsilon}([a,b])\subset I_{b-}^{\al}(L^{p})$.

Let $g, f\in L^1([0,T])$ be two functions
such that, for some $\alpha\in(0,1)$, $f\in I_{a+}^{\al}(L^{1})$ and
$g^{b-}\in I_{b-}^{1-\al}(L^{1})$, where $g_r^{b-}=g_r-g_{b-}$. In this case we say that
$f$ is integrable with respect to $g$ if and only if 
$(D_{a+}^{\al} f)  D_{b-}^{1-\al}g_{r}^{b-}\in L^{1}([a,b])$. In this case we 
define the integral $\int_a^b f\, dg$ in the following way
\begin{equation}\label{eq:def-intg-frac}
\int_{a}^{b} f_{r}\, dg_{r}
:=
\int_{a}^{b} (D_{a+}^{\al} f_{r}) D_{b-}^{1-\al}g_{r}^{b-} \, dr.
\end{equation}
 Under our assumptions, it can be checked that $\int_{a}^{b} f_{r}\, dg_{r}$ is well-defined, and that it coincides with Young's integral defined as a limit of Riemann sums. We shall derive below an extension of this integral suited to our purposes.

\subsection{An extension of the fractional integral}
We assume in this section that  $x$ is real valued.
Consider the following additional assumption on the coefficient $\si: \R^m \rightarrow \R^m$.
\begin{hypothesis}\label{hyp:sigma}
The function $\si:\R^m\to\R^m$ satisfies $\si(0)=0$ and
\begin{equation}\label{eq:dsigma}
|\si(\xi_{2})-\si(\xi_{1})| \lesssim \left||\xi_{2}|^{\ka}-|\xi_{1}|^{\ka}\right|,
\quad \xi_1,\xi_2\in\R^m,
\end{equation}
for some $\ka\in(0,1)$ such that $\ga(\ka+1)<1$.
\end{hypothesis}
\begin{remark}\label{re:2.2}
In order to understand the implications of Hypothesis \ref{hyp:sigma}, note that if $\si$ fulfills condition \eqref{eq:dsigma} and if we consider $\xi_{1},\xi_{2}\in\R^{m}$ such that $|\xi_{1}|=|\xi_{2}|$, then we obviously have $\si(\xi_{2})=\si(\xi_{1})$. Thus (\ref{eq:dsigma})  implies that $\sigma$ is a radial function, that is, $\sigma(\xi)=\rho(|\xi|)$, where $\rho: [0,\infty) \to \R^m$.
 On the other hand, it is not difficult to see that  a radial function $\sigma(\xi)=\rho(|\xi|)$  such that $\rho \in \mathcal{C}^1((0,\infty))$, $\rho(0)=0$ and
$|\rho^{(1)}(y)|\lesssim  y^{\ka-1}$, $y>0$, satisfies inequality (\ref{eq:dsigma}).
\end{remark}

For a function $\sigma$ satisfying Hypothesis 2.1, we  define   
\begin{equation}\label{eq:def-holder-semi-norm}
\cn_{\ka,\si}
:= 
\sup\lcl \frac{\lln\sigma(\xi_2)-\sigma(\xi_1)\rrn}{\left| |\xi_2|^{\ka}-|\xi_1|^{\ka}\right|} 
:\, \xi_2,\xi_1\in\mathbb{R}^m, |\xi_{1}|\ne |\xi_{2}|\rcl.
\end{equation}

We now label the following  auxiliary result for further use.

\begin{lemma}\label{lem:psigma}
Assume $\si$ satisfies Hypothesis \ref{hyp:sigma}. Then we have
$$|\si(\xi_{2})-\si(\xi_{1})| \le \frac{ \ka}{ \ka+\eta}    \cn_{\ka,\si} \left(|\xi_2|^{-\eta}+|\xi_1|^{-\eta}\right)
\left|\xi_2-\xi_1\right|^{\ka+\eta},
$$
for any $0\le \eta \le 1-\ka$ and $\xi_1,\xi_2\in\R^m\setminus\{ 0\}$.
\end{lemma}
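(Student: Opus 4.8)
The plan is to reduce the vector statement to a scalar inequality in the radii $|\xi_1|,|\xi_2|$ and then to prove that scalar inequality by an elementary one–dimensional integral estimate. First I would invoke the definition \eqref{eq:def-holder-semi-norm} of $\cn_{\ka,\si}$: as long as $|\xi_1|\ne|\xi_2|$ it gives at once
\[
|\si(\xi_2)-\si(\xi_1)|\le\cn_{\ka,\si}\,\left| |\xi_2|^{\ka}-|\xi_1|^{\ka}\right|,
\]
while the case $|\xi_1|=|\xi_2|$ is trivial, since then $\si(\xi_2)=\si(\xi_1)$ by Remark \ref{re:2.2}, so the left-hand side of the claimed bound vanishes. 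Writing $a=\min(|\xi_1|,|\xi_2|)$ and $b=\max(|\xi_1|,|\xi_2|)$, which are both strictly positive because $\xi_1,\xi_2\ne0$, the problem thus reduces to the scalar inequality
\[
b^{\ka}-a^{\ka}\le\frac{\ka}{\ka+\eta}\,\left(a^{-\eta}+b^{-\eta}\right)(b-a)^{\ka+\eta}.
\]
Indeed, the reverse triangle inequality gives $b-a=\left| |\xi_2|-|\xi_1|\right|\le|\xi_2-\xi_1|$, and since $\ka+\eta>0$ the right-hand side of the target only increases when $b-a$ is replaced by $|\xi_2-\xi_1|$; moreover $a^{-\eta}+b^{-\eta}=|\xi_1|^{-\eta}+|\xi_2|^{-\eta}$.

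For the scalar inequality I would start from the representation $b^{\ka}-a^{\ka}=\ka\int_a^b s^{\ka-1}\,ds$ and split the integrand as $s^{\ka-1}=s^{-\eta}\,s^{\ka+\eta-1}$. Because $\eta\ge0$, the factor $s^{-\eta}$ is nonincreasing on $[a,b]$, so $s^{-\eta}\le a^{-\eta}$ there and
\[
b^{\ka}-a^{\ka}\le\ka\,a^{-\eta}\int_a^b s^{\ka+\eta-1}\,ds=\frac{\ka}{\ka+\eta}\,a^{-\eta}\left(b^{\ka+\eta}-a^{\ka+\eta}\right).
\]
The hypothesis $\eta\le1-\ka$ forces $\ka+\eta\in(0,1]$, whence $t\mapsto t^{\ka+\eta}$ is subadditive and $b^{\ka+\eta}-a^{\ka+\eta}\le(b-a)^{\ka+\eta}$. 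Inserting this bound and using the crude estimate $a^{-\eta}\le a^{-\eta}+b^{-\eta}$ yields exactly the scalar inequality, and hence the lemma.

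I do not anticipate a genuine obstacle here; the care needed is confined to the role of the two constraints on $\eta$. The condition $\eta\ge0$ is what makes $s^{-\eta}$ decreasing, legitimizing the evaluation at the left endpoint $a$, while $\eta\le1-\ka$ is precisely what guarantees $\ka+\eta\le1$ and therefore the subadditivity of the power invoked in the final step; one should check that the extreme cases $\eta=0$ and $\eta=1-\ka$ (where subadditivity degenerates to the identity $b-a=b-a$) are both covered, which they are. The symmetric factor $a^{-\eta}+b^{-\eta}$ in the statement is slightly wasteful, as the argument in fact produces the sharper prefactor $\min(|\xi_1|,|\xi_2|)^{-\eta}$; the symmetric form is simply the convenient one to carry into the subsequent integration estimates and is obtained for free at the last step.
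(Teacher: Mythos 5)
Your proof is correct and is essentially the paper's own argument: both reduce to the radial quantity via the defining bound, write $b^{\ka}-a^{\ka}=\ka\int_a^b s^{\ka-1}\,ds$, and extract the factor $a^{-\eta}=\min(|\xi_1|,|\xi_2|)^{-\eta}$ from the integrand. The only (cosmetic) difference is that the paper bounds $s^{\ka+\eta-1}\le (s-a)^{\ka+\eta-1}$ pointwise before integrating, whereas you integrate first and then invoke subadditivity of $t\mapsto t^{\ka+\eta}$ --- two phrasings of the same estimate.
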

\begin{proof}  The case $\eta =0$ or $\eta= 1-\ka$ is obvious, so we assume $0< \eta < 1-\ka$.
Without loss of generality, we can assume that $|\xi_1|\le |\xi_2|$.
According to (\ref{eq:dsigma}), we can write
\begin{eqnarray*}
|\si(\xi_{2})-\si(\xi_{1})|   
&\le &  \cn_{\ka,\si} \left( |\xi_{2}|^{\ka}-|\xi_{1}|^{\ka}  \right)
= \ka \cn_{\ka,\si}\int_{|\xi_1|}^{|\xi_2|}z^{\ka-1}dz
\le\ka  \cn_{\ka,\si}|\xi_1|^{-\eta}
\int_{|\xi_1|}^{|\xi_2|}z^{\ka+\eta-1}dz\\
&\le &  \ka    \cn_{\ka,\si} |\xi_1|^{-\eta}
\int_{|\xi_1|}^{|\xi_2|}\left(z-|\xi_1|\right)^{\ka+\eta-1}dz,
\end{eqnarray*}
which yields our claim.
\end{proof}

We are now ready to establish our extension result for the Young's integral.

\begin{proposition}\label{prop:integral}
Assume that $\sigma $ satisfies Hypothesis \ref{hyp:sigma}, and recall that we consider $\ka,\ga$ such that $1-\ga(\ka+1)>0$.  For any $\gamma\in (0,1)$ and $\eta>0$ we introduce the space
\begin{equation}\label{eq:def-C-gamma-eta}
\mathcal{C}^\ga_\eta ([0,T] ;\R^m) =\{y \in \mathcal{C}^\ga  ([0,T] ;\R^m): |y|^{-1} \in L^{\eta}([0,T];\R)\}.
\end{equation}
Then the following results hold true:

\noindent
\emph{(i)}
If  $y\in\cac^{\ga}_\eta([0,T];\R^m) $   for some $\eta$ such that $\frac{1-\ga(1+\ka)}{\ga}<\eta<1-\ka$, then, for any $t\in [0,T]$, the integral
\begin{equation*}
\lc\Lambda(y)\rc_{t}: = \int_0^{t}\sigma(y_s) \, dx_s,
\end{equation*}
is well defined  in the sense of relation~\eqref{eq:def-intg-frac}. 

\noindent
\emph{(ii)}
Notice that if $\eta<1-\ka$, then we have $\ga(\ka+\eta)<\ga<1$.
Now if $y $  satisfies the stronger condition  $y \in \mathcal{C}^\ga_{ \frac  \eta  {\ga(\ka+\eta)}} ([0,T] ;\R^m) $, then $\Lambda(y)$ belongs to  the space $\cac^{\ga}([0,T];\R^m)$, and
\begin{equation}\label{eq:bnd-Gamma-y-with-intg}
\left\|\Lambda(y)\right\|_{\ga}
\lesssim
\|x\|_{\ga}\left(\|\sigma(y)\|_{\infty}+\cn_{\ka,\si} \|y\|_{\ga}^{\ka+\eta}
\left(\int_{0}^{T}|y_s|^{-\frac \eta { \ga(\ka+\eta)}}ds\right)^{\ga(\ka+\eta)}\right),
\end{equation}
where $\cn_{\ka,\si}$ has been introduced in \eqref{eq:def-holder-semi-norm}.
\end{proposition}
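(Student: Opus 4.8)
The plan is to prove both parts of Proposition~\ref{prop:integral} via the fractional-integral representation \eqref{eq:def-intg-frac}, the natural strategy being to verify the integrability condition $(D_{0+}^{\al}\sigma(y))\,D_{t-}^{1-\al}x^{t-}\in L^{1}([0,t])$ for a suitable splitting parameter $\al\in(0,1)$. First I would fix $\al$ so that the two fractional derivatives land in dual Lebesgue spaces; the estimate for $D_{t-}^{1-\al}x^{t-}$ is classical since $x\in\cac^{\ga}$ and yields $|D_{t-}^{1-\al}x_{s}^{t-}|\lesssim\|x\|_{\ga}(t-s)^{\ga-1+\al}$, so one needs $\al<\ga$. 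The genuine work is controlling $D_{0+}^{\al}\sigma(y)$, and here is where Lemma~\ref{lem:psigma} enters: using the representation \eqref{eq:def-Da-plus}, the numerator $|\sigma(y_s)-\sigma(y_r)|$ is bounded by $\tfrac{\ka}{\ka+\eta}\cn_{\ka,\si}(|y_s|^{-\eta}+|y_r|^{-\eta})|y_s-y_r|^{\ka+\eta}$, and since $y\in\cac^{\ga}$ we replace $|y_s-y_r|^{\ka+\eta}$ by $\|y\|_{\ga}^{\ka+\eta}|s-r|^{\ga(\ka+\eta)}$. This turns the singular integral into $\int_0^s (|y_s|^{-\eta}+|y_r|^{-\eta})|s-r|^{\ga(\ka+\eta)-1-\al}\,dr$, plus the boundary term $|\sigma(y_s)|\,s^{-\al}$.

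For part \emph{(i)} the goal is merely finite integrability, so I would choose $\al$ in the admissible window and show $D_{0+}^{\al}\sigma(y)\in L^{1}$. The exponent condition $\eta>\tfrac{1-\ga(1+\ka)}{\ga}$ rearranges to $\ga(\ka+\eta)>1-\ga$, guaranteeing that after integrating the regular $|y_s|^{-\eta}$-piece we can pick $\al<\ga(\ka+\eta)-(1-\ga)$ so that the kernel $|s-r|^{\ga(\ka+\eta)-1-\al}$ remains integrable near the diagonal; meanwhile $\eta<1-\ka$ keeps Lemma~\ref{lem:psigma} applicable. The term involving $|y_r|^{-\eta}$ under the integral is handled by observing $|y|^{-1}\in L^{\eta}$, i.e. $|y|^{-\eta}\in L^{1}$, so its contribution is controlled by the $L^{1}$-norm of $|y|^{-\eta}$ against a bounded kernel.

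Part \emph{(ii)} is quantitative and is where the sharp exponent $q=\tfrac{\eta}{\ga(\ka+\eta)}$ appears. To get the $\ga$-Hölder bound \eqref{eq:bnd-Gamma-y-with-intg} I would estimate $|\der[\Lambda(y)]_{st}|=|\int_s^t\sigma(y_u)\,dx_u|$ on a generic subinterval, again through \eqref{eq:def-intg-frac} but now on $[s,t]$, and extract a factor $(t-s)^{\ga}$. The boundary piece produces the $\|\sigma(y)\|_{\infty}$ term. For the singular piece the key analytic device is to apply Hölder's inequality in the $r$-integral with the conjugate pair that exactly converts the local $L^{\eta}$-control of $|y|^{-1}$ into the factor $(\int_0^T|y_s|^{-\eta/(\ga(\ka+\eta))}\,ds)^{\ga(\ka+\eta)}$ displayed in \eqref{eq:bnd-Gamma-y-with-intg}; the Hölder exponents are dictated precisely by matching $q$ with the scaling of the kernel.

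The main obstacle I anticipate is the simultaneous balancing of three constraints on $\al$: it must satisfy $\al<\ga$ (for the $x$-derivative), $\al<\ga(\ka+\eta)$ (for diagonal integrability after using Lemma~\ref{lem:psigma}), and it must leave enough room for the Hölder step in part \emph{(ii)} to reproduce the exact exponent $\ga(\ka+\eta)$ on the $|y|^{-1}$-integral. Verifying that the window $\tfrac{1-\ga(1+\ka)}{\ga}<\eta<1-\ka$ is nonempty and admits such an $\al$ is the crux; once the bookkeeping of exponents is organized, the remaining estimates are routine applications of Hölder's inequality and the elementary kernel bounds for Beta-type integrals $\int_0^1 u^{a-1}(1-u)^{b-1}\,du$.
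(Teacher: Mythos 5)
Your overall strategy coincides with the paper's own proof: represent the integral via \eqref{eq:def-intg-frac}, bound $D^{1-\al}_{t-}x^{t-}$ using the H\"older regularity of $x$, control $D^{\al}_{0+}\sigma(y)$ through Lemma \ref{lem:psigma} together with $y\in\cac^{\ga}$, obtain part \emph{(i)} from $|y|^{-\eta}\in L^{1}$, and obtain part \emph{(ii)} from H\"older's inequality with $p^{-1}=\ga(\ka+\eta)$. However, your bookkeeping of the constraints on $\al$ contains a genuine error, and it sits exactly at the point you yourself identify as the crux. The condition required by the $x$-derivative is not $\al<\ga$ but $\al>1-\ga$: the bound $|D^{1-\al}_{t-}x^{t-}_{s}|\lesssim\|x\|_{\ga}(t-s)^{\al+\ga-1}$ needs the inner integral $\int_{s}^{t}(r-s)^{\al+\ga-2}\,dr$ to converge, i.e. $\al+\ga>1$; for $\al\le 1-\ga$ the fractional derivative of order $1-\al$ of a merely $\ga$-H\"older path need not exist at all, so the representation \eqref{eq:def-intg-frac} is simply unavailable. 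Since you repeat ``$\al<\ga$ (for the $x$-derivative)'' as one of your three constraints, and since your additional requirement $\al<\ga(\ka+\eta)-(1-\ga)$ encourages taking $\al$ small, a literal implementation of your plan would select inadmissible values of $\al$ (e.g. $\al$ close to $0$), and the estimates would collapse. Note also that your ``bounded kernel'' argument in part \emph{(i)} tacitly uses $\al+\ga-1\ge 0$, i.e. again the lower bound you omitted.

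The correct window is $1-\ga<\al<\ga(\ka+\eta)$: the lower bound makes $D^{1-\al}_{t-}x^{t-}$ well defined and the factor $(t-s)^{\al+\ga-1}$ bounded, while the upper bound makes the diagonal kernel $(s-u)^{\ga(\ka+\eta)-\al-1}$ integrable. Nonemptiness of this window is precisely equivalent to the hypothesis $\eta>\frac{1-\ga(1+\ka)}{\ga}$, i.e. $\ga(\ka+\eta)>1-\ga$; this is where that hypothesis enters, not through your condition $\al<\ga(\ka+\eta)-(1-\ga)$, which is both unnecessary for diagonal integrability and potentially incompatible with $\al>1-\ga$ (their compatibility would require the strictly stronger inequality $\ga(\ka+\eta)>2(1-\ga)$). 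Finally, no extra constraint on $\al$ arises from part \emph{(ii)}: applying H\"older with $p^{-1}=\ga(\ka+\eta)$, $q^{-1}=1-p^{-1}$ produces Beta-type integrals whose exponents $q(\al+\ga-1)$ and $q(\ga(\ka+\eta)-\al)$ are positive for any $\al$ in the correct window, and the total power is $(\al+\ga-1)+(\ga(\ka+\eta)-\al)+\tfrac1q=\ga$ independently of $\al$, which yields \eqref{eq:bnd-Gamma-y-with-intg}. With these corrections your outline matches the paper's proof step by step.
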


\begin{remark} 
Taking into account that the function $\eta \to \frac \eta{ \eta+\ka}$ is strictly increasing we deduce that
  $\eta >\frac  1 \ga  -1-\ka  $ if and only if  $\frac \eta { \ga(\ka+\eta)} >\frac {1-\ga-\ka\ga}{\ga (1-\ga)}$. Therefore, in condition (ii) the integrability condition for $|y|^{-1}$ is of order strictly larger than 
$\frac {1-\ga-\ka\ga}{\ga (1-\ga)}$.
\end{remark}

\begin{proof}[Proof of Proposition \ref{prop:integral}] 
Let $\alpha$  be such that $1-\ga<\alpha< \ga (\ka +\eta)$, which implies   $\alpha\ga^{-1}-\ka<\eta<1-\ka$. 
Let $0\le t_1<t_2\le T$.  Recall that the integral
$\int_{t_{1}}^{t_{2}} [\si(y)]_s\, dx_{s}$ is defined by formula~\eqref{eq:def-intg-frac}. To show that this integral exists and to establish suitable estimates,  we first analyze the fractional derivative of $x$
\begin{eqnarray}
\left|  D^{1-\alpha}_{t_2-}x^{t_2-}_s \right|
&=&
\frac{1}{\Gamma(\alpha)}
\left|\frac{x_s-x_{t_2}}{(t_2-s)^{1-\alpha}}
+(1-\alpha)\int_s^{t_2} \frac{x_s-x_{r}}{(r-s)^{2-\alpha}}dr\right|\nonumber\\
&\lesssim& \|x\|_{\ga}(t_2-s)^{\alpha+\ga-1}+
\|x\|_{\ga}\int_s^{t_2}(r-s)^{\alpha+\ga-2}dr\nonumber\\
&\lesssim& \|x\|_{\ga}(t_2-s)^{\alpha+\ga-1},\label{eq:dfracx}
\end{eqnarray}
where we have used the fact that $\al+\ga>1$ for the last step. Hence, we can write
\begin{equation*}
  \int_{t_{1}}^{t_{2}}  \left| [D_{t_{1}+}^{\al} \si(y)]_{s} \, D_{t_{2}-}^{1-\al}x_{s}^{t_{2}-}  \right| ds  
\lesssim  \| x\| _\ga \left(J_{t_{1}t_{2}}^{1} + J_{t_{1}t_{2}}^{2}\right),
\end{equation*}
with
\[
J_{t_{1}t_{2}}^{1}=
\|\sigma(y)\|_{\infty}\int_{t_1}^{t_2}(s-t_1)^{-\alpha}(t_2-s)^{\alpha+\ga-1}ds 
\]
and
\[
J_{t_{1}t_{2}}^{2}=
\int_{t_1}^{t_2}\left(\int_{t_1}^{s}\frac{|\sigma(y_s)-\sigma(y_u)|}{(s-u)^{\alpha+1}}
du\right)(t_2-s)^{\alpha+\ga-1}ds.
\]
It is now readily checked that 
\begin{equation}  \label{eq1}
J_{t_{1}t_{2}}^{1}\lesssim \|\sigma(y)\|_{\infty}  (t_{2}-t_{1})^{\ga}.
\end{equation}
For the term $J_{t_{1}t_{2}}^{2}$, invoking Lemma \ref{lem:psigma} and some elementary algebraic manipulations, we get
\begin{eqnarray} \label{eq:inty}
J_{t_{1}t_{2}}^{2}&\lesssim&   \cn_{\kappa, \sigma}
\int_{t_1}^{t_2}(t_2-s)^{\alpha+\ga-1}
\int_{t_1}^{s}
\left(|y_s|^{-\eta}+|y_u|^{-\eta}\right)\frac{|y_s-y_u|^{\ka+\eta}}{(s-u)^{\alpha+1}}duds \notag\\
&\lesssim& \cn_{\kappa, \sigma} \|y\|_{\ga}^{\ka+\eta}\int_{t_1}^{t_2}(t_2-s)^{\alpha+\ga-1}
\int_{t_1}^{s}
\left(|y_s|^{-\eta}+|y_u|^{-\eta}\right)(s-u)^{\ga(\ka+\eta)-\alpha-1}duds \notag\\
&\lesssim& \cn_{\kappa, \sigma} \|y\|_{\ga}^{\ka+\eta}  \Bigg( \int_{t_1}^{t_2}(t_2-s)^{\alpha+\ga-1}|y_s|^{-\eta}\int_{t_1}^{s}
(s-u)^{\ga(\ka+\eta)-\alpha-1}duds  \\
&&
\hspace{2in}+\int_{t_1}^{t_2}|y_u|^{-\eta}\int_{u}^{t_2}(t_2-s)^{\alpha+\ga-1}(s-u)^{\ga(\ka+\eta)-\alpha-1}dsdu\Bigg) . \notag
\end{eqnarray}
Notice that   $\eta>\alpha\ga^{-1}-\ka$ implies that
$\ga(\ka+\eta)-\alpha>0$.  This implies that the integral  $\int_{t_{1}}^{t_{2}} [\si(y)]_{s}\, dx_{s}$  is well defined, provided   $|y|^{-1} \in L^\eta([0,T];\R)$.

 Applying H\"older's inequality with $p^{-1}=\ga(\ka+\eta)$ and $q^{-1}=1-p^{-1}$,  and assuming  $ |y|^{-1} \in L^{\eta/(\ga(\ka+\eta))}([0,T];\R) $,  yields
\begin{multline*}
J_{t_{1}t_{2}}^{2}
\lesssim  \cn_{\kappa, \sigma} \|y\|_{\ga}^{\ka+\eta} 
\left(\int_{t_1}^{t_2}|y_u|^{-p\eta}du\right)^{1/p}
\Bigg[
\left(\int_{t_1}^{t_2}(t_2-s)^{q(\alpha+\ga-1)}
(s-t_1)^{q(\ga(\ka+\eta)-\alpha)}ds\right)^{1/q} \\
+
\left(\int_{t_1}^{t_2}(t_2-u)^{q\ga(\ka+\eta+1)-q}du\right)^{1/q}
 \Bigg].
\end{multline*}
Now a simple analysis of the exponents in the above relation  implies 
\begin{equation} \label{eq2} 
J_{t_{1}t_{2}}^{2}\lesssim   \cn_{\kappa, \sigma} \|y\|_{\ga}^{\ka+\eta} \left(\int_{0}^{T}|y_s|^{-\frac{\eta}{\ga(\ka+\eta)}}ds\right)^{\ga(\ka+\eta)}  (t_2-t_1)^{\ga}.
\end{equation}
Finally, the estimates \eqref{eq:bnd-Gamma-y-with-intg} follows from  \eqref{eq1} and \eqref{eq2}. The proof is now complete.
\end{proof}

\subsection{The integral via Riemann sums}
The next goal is to see that the integral $\Lambda(y)$ given in Proposition \ref{prop:integral} can be approximated by Riemann sums. Towards this end,  for any $n\ge 2$, we consider
a  uniform partition $\Pi_n=\{a=t_1<t_2<\dots<t_n=b\}$ of the interval $[a,b]\subset[0,T]$,
such that $ \left| \Pi_n \right|:=\frac  {b-a}{n-1} =t_{j+1}-t_j   $ for all $j \in\{1,2,\ldots,n-1\}$. For $y$ as in Proposition \ref{prop:integral} (i), we define the following approximation based on $\Pi_{n}$
\begin{equation}\label{eq:def-y-n}
z^{n}_s=\sum_{i=2}^n\frac{1}{\left|\Pi_n\right|}\left(\int_{t_{i-1}}^{t_i}\sigma(y_s)
ds\right)\1_{(t_{i-1},t_i]}(s),\quad  s\in[a,b].
\end{equation}
We observe that, owing  to \cite[Corollary 2.3]{Za}, we have
$$\int_a^{b} z_{s}^{n}dx_s=\sum_{i=2}^n\frac{1}{\left|\Pi_n\right|}\left(\int_{t_{i-1}}^{t_i}\sigma(y_s)
ds\right) \delta x_{t_{i-1}t_{i}},
$$
where the left hand side is understood as in relation \eqref{eq:def-intg-frac} and where we recall that $\delta x_{uv}:= x_{v} - x_{u}$.   
The convergence of $\int_a^b z^{n}_s \, dx_s$ is given in the following theorem, which is the main result of this subsection.

\begin{theorem}\label{thm:lim-riemann-sums}
Suppose that $\sigma$ satisfies Hypothesis \ref{hyp:sigma}. 
Let   $\eta$ be such that $\frac{1-\ga(1+\ka)}{\ga}<\eta<1-\ka$. Consider
$y \in \cac_{\eta}^{\ga}([0,T];\R^m)$ as introduced in \eqref{eq:def-C-gamma-eta}, and 
recall that $z^{n}$ is defined by~\eqref{eq:def-y-n}. Then for all $0 \le a<b \le T$ we have
$$
\lim_{n\to\infty}\int_a^b z_{s}^{n}dx_s
= 
\int_a^b\sigma(y_s)dx_s.
$$
\end{theorem}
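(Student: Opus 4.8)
The plan is to work on a fixed interval $[a,b]\subset[0,T]$, to use the fractional representation~\eqref{eq:def-intg-frac}, and to show that the integrand attached to $z^{n}$ converges in $L^{1}([a,b])$ to the one attached to $\si(y)$. Fix $\alpha$ with $1-\ga<\alpha<\ga(\ka+\eta)$, exactly as in the proof of Proposition~\ref{prop:integral}, so that $\ga(\ka+\eta)-\alpha>0$ and the bound $|D_{b-}^{1-\alpha}x_s^{b-}|\lesssim\|x\|_\ga(b-s)^{\alpha+\ga-1}$ from~\eqref{eq:dfracx} is at our disposal. Writing $w=\si(y)$ and $g^{n}=z^{n}-w$, relation~\eqref{eq:def-intg-frac} gives
\[
\int_a^b z_s^{n}\,dx_s-\int_a^b w_s\,dx_s=\int_a^b\lp D_{a+}^{\alpha}g^{n}\rp_s\,D_{b-}^{1-\alpha}x_s^{b-}\,ds ,
\]
and it suffices to prove that this quantity tends to $0$. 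Inserting~\eqref{eq:dfracx} and the expression~\eqref{eq:def-Da-plus} for $D_{a+}^{\alpha}$, one is reduced to controlling $\|x\|_\ga\,(K_n^{1}+K_n^{2})$, where $K_n^{1}$ collects the boundary term $(s-a)^{-\alpha}g_s^{n}$ and $K_n^{2}$ collects the double integral in $(s,r)$ with kernel $(s-r)^{-1-\alpha}$ acting on the increment $g_s^{n}-g_r^{n}$.

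The term $K_n^{1}$ is handled by dominated convergence. Since $y\in\cac^\ga$ is continuous and $\si$ is continuous with $\si(0)=0$, the average $z_s^{n}$ converges to $w_s$ at every $s$, while $|g_s^{n}|\le 2\|w\|_\infty<\infty$; as the weight $(s-a)^{-\alpha}(b-s)^{\alpha+\ga-1}$ is integrable on $[a,b]$ (because $\alpha<1$ and $\alpha+\ga-1>0$), we obtain $K_n^{1}\to0$.

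The term $K_n^{2}$ is the heart of the matter. The starting point is the representation $g_s^{n}=|\Pi_n|^{-1}\int_{I_s}(w_\tau-w_s)\,d\tau$, where $I_s$ denotes the subinterval of $\Pi_n$ containing $s$; combined with Lemma~\ref{lem:psigma} and the inequality $|\tau-s|\le|\Pi_n|$, this yields the pointwise bound $|g_s^{n}|\lesssim\cn_{\ka,\si}\|y\|_\ga^{\ka+\eta}|\Pi_n|^{\ga(\ka+\eta)}\,(|y_s|^{-\eta}+|\Pi_n|^{-1}\int_{I_s}|y_\tau|^{-\eta}\,d\tau)$, which already carries the crucial gain $|\Pi_n|^{\ga(\ka+\eta)}$. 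I would then split the inner $r$-integral according to whether $r$ lies in the same subinterval as $s$ or not. For same-subinterval pairs one has the exact identity $g_s^{n}-g_r^{n}=w_r-w_s$, so the corresponding contribution is dominated by the integrable bound already established for $J_{t_1t_2}^{2}$ in~\eqref{eq:inty}, now restricted to the shrinking diagonal band $\{r,s\ \text{in the same subinterval}\}$; dominated convergence then forces it to $0$. For pairs in distinct subintervals one uses $|g_s^{n}-g_r^{n}|\le|g_s^{n}|+|g_r^{n}|$ together with the gain above, and the integral of the bare kernel $(s-r)^{-1-\alpha}$ over $\{r\notin I_s\}$, once summed over the subintervals against the averaged weights, costs a factor of order $|\Pi_n|^{-\alpha}$, so that the whole off-diagonal contribution is of order $|\Pi_n|^{\ga(\ka+\eta)-\alpha}\to0$.

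The main obstacle lies precisely in this last estimate. Because $z^{n}$ is piecewise constant, the increment $g_s^{n}-g_r^{n}$ does not vanish as $r\to s$ once a grid point separates $r$ and $s$, so no Hölder modulus is available across the jumps and one cannot simply reproduce the $J_{t_1t_2}^{2}$-bound of~\eqref{eq:inty}. The delicate point is that, after integrating the bare kernel near the grid points, the singular weight $|y|^{-\eta}$ tends to get coupled with a second singularity of the form $(s-t_{i-1})^{-\alpha}$; this coupling must be broken, e.g.\ by applying H\"older's inequality on each subinterval before summing, the strict inequality $\eta>\frac{1-\ga(1+\ka)}{\ga}$ leaving exactly the integrability margin needed to absorb the loss against the favourable power $|\Pi_n|^{\ga(\ka+\eta)-\alpha}$. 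Once $K_n^{1}\to0$ and $K_n^{2}\to0$ are secured, the convergence of the Riemann sums follows, and the identification of the limit with $\int_a^b\si(y_s)\,dx_s$ is immediate from~\eqref{eq:def-intg-frac}.
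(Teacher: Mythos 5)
Your overall architecture --- the fractional representation \eqref{eq:def-intg-frac}, the split into a boundary term $K_n^{1}$ and a double integral $K_n^{2}$, and dominated convergence --- is exactly the paper's, and your treatment of $K_n^{1}$ and of the same-subinterval pairs (where $g^n_s-g^n_r=w_r-w_s$) is correct. The gap is in the off-diagonal estimate, and it is fatal as written. For $r\notin I_s$ you bound $|g^n_s-g^n_r|\le |g^n_s|+|g^n_r|$ and use $|g^n_s|\lesssim \cn_{\ka,\si}\|y\|_\ga^{\ka+\eta}|\Pi_n|^{\ga(\ka+\eta)}\bigl(|y_s|^{-\eta}+|\Pi_n|^{-1}\int_{I_s}|y_\tau|^{-\eta}d\tau\bigr)$. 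Integrating the bare kernel over $\{r<s,\ r\notin I_s\}$ produces the factor $(s-t_{i(s)-1})^{-\alpha}$, where $t_{i(s)-1}$ is the left endpoint of $I_s$; this factor is \emph{not} of order $|\Pi_n|^{-\alpha}$ pointwise (it is unbounded), so the claimed cost $|\Pi_n|^{-\alpha}$ is only valid for the \emph{averaged} part of your bound. For the pointwise part you must control $\int_a^b |y_s|^{-\eta}(s-t_{i(s)-1})^{-\alpha}(b-s)^{\alpha+\ga-1}ds$, and the only available information is $|y|^{-\eta}\in L^1$: the hypothesis $y\in\cac^\ga_\eta$ gives no $L^p$-integrability with $p>1$, and the strict inequality $\eta>\frac{1-\ga(1+\ka)}{\ga}$ is a constraint on exponents (it is what permits choosing $\alpha\in(1-\ga,\ga(\ka+\eta))$), not an integrability margin, so H\"older's inequality cannot be applied as you propose. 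Concretely, take $m=1$, $s_0=(a+b)/2$, and $y_s=|s-s_0|^{\beta}$ near $s_0$ (suitably extended), with $\max\bigl(\ga,\frac{1-\alpha}{\eta}\bigr)\le\beta<\frac{1}{\eta}$; then $y\in\cac^\ga_\eta$, but for every $n$ for which $s_0$ is a grid point (all odd $n$) one has $\int_{I}|y_s|^{-\eta}(s-s_0)^{-\alpha}ds=\int_0^{|\Pi_n|}u^{-\beta\eta-\alpha}du=\infty$, so your off-diagonal bound is infinite along a subsequence, hence vacuous. A quantitative repair of your idea (run Lemma~\ref{lem:psigma} with a smaller exponent $\eta'<\eta$, so that $|y|^{-\eta'}\in L^{\eta/\eta'}$, then apply H\"older) closes only when $\frac{1-\ga(1+\ka)}{\ga}<\eta'<\ga\eta$, i.e.\ under the strictly stronger hypothesis $\eta>\frac{1-\ga(1+\ka)}{\ga^2}$; it does not prove the theorem as stated.

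The paper avoids this coupling altogether: it never writes $|g^n_s|+|g^n_u|$ across grid points. Instead, the rectangular increment is split as $|\si(y_s)-\si(y_u)|+|z^n_s-z^n_u|$. The first term is an $n$-independent dominating function, integrable with respect to the measure $\mu$ of \eqref{eq:def-mu} because $\ga(\ka+\eta)>\alpha$. The second term is bounded in Lemma~\ref{lem:4.7} via the shift identity
\begin{equation*}
z^n_s-z^n_u=\frac{1}{|\Pi_n|}\int_{t_{j-1}}^{t_j}\bigl(\si(y_{r+t_{i-1}-t_{j-1}})-\si(y_r)\bigr)\,dr ,
\end{equation*}
which, through Lemma~\ref{lem:psigma}, produces \emph{only subinterval-averaged} weights $\int_{I}|y_r|^{-\eta}dr$; these decouple from the kernel singularity, and Lemmas~\ref{lem:4.8} and \ref{lem:a4.9} show the resulting majorants converge in $L^1([a,b]^2,\mu)$, so a generalized dominated convergence theorem (with $L^1$-convergent dominating functions) finishes the proof under the bare assumption $|y|^{-\eta}\in L^1$. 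If you want to salvage your scheme, replace the off-diagonal bound $|g^n_s|+|g^n_r|$ by this pairing argument for $|z^n_s-z^n_r|$, keeping $|\si(y_s)-\si(y_r)|$ as the fixed dominating term.
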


 In order to prove this theorem, we first go through a series of  auxiliary results.

\begin{lemma}\label{lem:aux2}
Let $\sigma$ satisfy Hypothesis \ref{hyp:sigma}, $y\in \cac^{\ga}([0,T];\R^m)$ and consider $[a,b]\subset[0,T]$. Then for all $s\in [a,b]$ we have
$$\left|\sigma(y_s)-z_{s}^{n}\right|\le
 \cn_{\ka,\si} \|y\|_{\ga}^{\ka}\left|\Pi_n\right|^{\ka\ga}. 
$$
\end{lemma}
\begin{proof}
For $s\in (a,b]$, the definition of $z^{n}$ gives
\begin{eqnarray*}
\left|\sigma(y_s)-z_{s}^{n}\right|&=&\sum_{i=2}^n\left|\sigma(y_s)-
\frac{1}{\left|\Pi_n\right|}\int_{t_{i-1}}^{t_i}\sigma(y_r)
dr\right|\1_{(t_{i-1},t_i]}(s)\\
&\le&\sum_{i=2}^n\frac{1}{\left|\Pi_n\right|}\left(\int_{t_{i-1}}^{t_i}|\sigma(y_s)-\sigma(y_r)|
dr\right)\1_{(t_{i-1},t_i]}(s)\\
&\le& \cn_{\ka,\si} \sum_{i=2}^n\frac{1}{\left|\Pi_n\right|}
\left(\int_{t_{i-1}}^{t_i} \big| \, |y_s|^{\ka}-|y_r|^{\ka} \big|
dr\right)\1_{(t_{i-1},t_i]}(s).
\end{eqnarray*}
Since $y$ is $\ga$-H\"older continuous, we thus have
\begin{eqnarray*}
\left|\sigma(y_s)-z_{s}^{n}\right|&\le&\cn_{\ka,\si}\|y\|_{\ga}^{\ka}
\sum_{i=2}^n\frac{1}{\left|\Pi_n\right|}\left(\int_{t_{i-1}}^{t_i}|s
-r|^{\ka\ga}
dr\right)\1_{(t_{i-1},t_i]}(s)\\
&\le&\cn_{\ka,\si} \|y\|_{\ga}^{\ka}
\sum_{i=2}^n|\Pi_n|^{\ka\ga}\1_{(t_{i-1},t_i]}(s),
\end{eqnarray*}
which completes the proof.
\end{proof}

We now estimate the H\"older regularity of our approximation $z^{n}$.  
\begin{lemma} \label{lem:4.7}
 Let $\sigma$ and $y$ be functions verifying the assumptions of Theorem \ref{thm:lim-riemann-sums}. Then, 
for $a< u<s\le b$, we have
\begin{equation*}
\big|z_{s}^{n}-z_{u}^{n}\big|
\lesssim \|y\|_{\ga}^{\ka+\eta} \lp \Phi^{n}_{u,s} + \Psi^{n}_{u,s}  \rp,
\end{equation*}
where 
\[
\Phi^{n}_{u,s}=
|\Pi_n|^{\ga(\ka+\eta)-1}
\sum_{2\le j<i\le n}
\left(\int_{t_{i-1}}^{t_i}|y_{r}|^{-\eta}dr +\int_{t_{j-1}}^{t_j}|y_{r}|^{-\eta}dr\right) 
\1_{(t_{j-1},t_j]}(u)\1_{(t_{i-1},t_i]}(s)
\]
and
\[
\Psi^{n}_{u,s}=
\frac{(s-u)^{\ga(\ka+\eta)}}{|\Pi_n|}
\sum_{2\le j<i\le n}
\left(\int_{t_{i-1}}^{t_i}|y_{r}|^{-\eta}dr +\int_{t_{j-1}}^{t_j}|y_{r}|^{-\eta}dr\right) 
\1_{(t_{j-1},t_j]}(u)\1_{(t_{i-1},t_i]}(s).
\]
\end{lemma}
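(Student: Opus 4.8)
The plan is to use that $z^{n}$ is piecewise constant, equal on each cell $(t_{i-1},t_i]$ to the mean of $\si(y)$ over that cell, and to reduce the increment $z^n_s-z^n_u$ to a normalized double integral of increments of $\si(y)$, to which Lemma~\ref{lem:psigma} applies directly. First I would dispose of the trivial situation: if $u$ and $s$ lie in the same subinterval of $\Pi_n$ then $z^n_u=z^n_s$, so the left-hand side vanishes, while the right-hand side is a sum over strictly ordered pairs $j<i$ and therefore also vanishes in that case. Hence I may assume $u\in(t_{j-1},t_j]$ and $s\in(t_{i-1},t_i]$ with $2\le j<i\le n$, and it suffices to establish the bound for this single surviving pair before reinstating the selecting indicators.

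For such a pair, using $\frac{1}{|\Pi_n|}\int_{t_{j-1}}^{t_j}dr'=1$ to symmetrize, I would rewrite
\begin{equation*}
z^{n}_{s}-z^{n}_{u}
=
\frac{1}{|\Pi_n|^{2}}
\int_{t_{i-1}}^{t_i}\int_{t_{j-1}}^{t_j}
\lp \si(y_r)-\si(y_{r'}) \rp \, dr' \, dr .
\end{equation*}
Applying Lemma~\ref{lem:psigma} (legitimate since $\eta\le 1-\ka$) together with the $\ga$-H\"older continuity of $y$ then gives the pointwise bound
\begin{equation*}
|\si(y_r)-\si(y_{r'})|
\lesssim
\cn_{\ka,\si}\,\|y\|_{\ga}^{\ka+\eta}
\lp |y_r|^{-\eta}+|y_{r'}|^{-\eta} \rp |r-r'|^{\ga(\ka+\eta)} .
\end{equation*}

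The geometric heart of the argument is to control $|r-r'|$ uniformly over $r\in(t_{i-1},t_i]$ and $r'\in(t_{j-1},t_j]$. Since $r$ and $s$ share a cell while $r'$ and $u$ share a cell, the triangle inequality yields $|r-r'|\le (s-u)+2|\Pi_n|$. Because $\eta<1-\ka$ forces $\ga(\ka+\eta)<\ga<1$, the map $t\mapsto t^{\ga(\ka+\eta)}$ is subadditive, so $|r-r'|^{\ga(\ka+\eta)}\lesssim (s-u)^{\ga(\ka+\eta)}+|\Pi_n|^{\ga(\ka+\eta)}$. Inserting this and evaluating the remaining integral by Fubini, namely $\int_{t_{i-1}}^{t_i}\int_{t_{j-1}}^{t_j}(|y_r|^{-\eta}+|y_{r'}|^{-\eta})\,dr'\,dr=|\Pi_n|\big(\int_{t_{i-1}}^{t_i}|y_r|^{-\eta}dr+\int_{t_{j-1}}^{t_j}|y_{r'}|^{-\eta}dr'\big)$, one power of $|\Pi_n|$ cancels against the prefactor $|\Pi_n|^{-2}$, and the two resulting contributions are precisely $\Phi^{n}_{u,s}$ (from the $|\Pi_n|^{\ga(\ka+\eta)}$ term) and $\Psi^{n}_{u,s}$ (from the $(s-u)^{\ga(\ka+\eta)}$ term), once the indicators $\1_{(t_{j-1},t_j]}(u)\1_{(t_{i-1},t_i]}(s)$ are restored and summed over $j<i$.

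I expect the only delicate point to be the uniform comparison $|r-r'|\lesssim (s-u)+|\Pi_n|$ and the attendant subadditivity step: one must check that the boundary effect coming from the nonzero cell width $|\Pi_n|$ produces only the harmless $\Phi^n$ term and does not degrade the H\"older exponent $\ga(\ka+\eta)$. Everything else — the double-integral identity and the invocation of Lemma~\ref{lem:psigma} — is routine, so the proof reduces to the clean splitting $|r-r'|^{\ga(\ka+\eta)}\lesssim (s-u)^{\ga(\ka+\eta)}+|\Pi_n|^{\ga(\ka+\eta)}$ followed by Fubini.
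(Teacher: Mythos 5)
Your proof is correct and follows essentially the same route as the paper's: the only difference is that you represent the difference of cell averages as a symmetrized double integral $\frac{1}{|\Pi_n|^2}\int_{t_{i-1}}^{t_i}\int_{t_{j-1}}^{t_j}(\sigma(y_r)-\sigma(y_{r'}))\,dr'\,dr$, whereas the paper uses the uniformity of the partition to write it as a single integral of shifted increments $\frac{1}{|\Pi_n|}\int_{t_{j-1}}^{t_j}(\sigma(y_{r+t_{i-1}-t_{j-1}})-\sigma(y_r))\,dr$ — a cosmetic distinction. Both arguments then rest on the identical core: Lemma \ref{lem:psigma}, the $\ga$-H\"older bound on $y$, the comparison of the relevant time gap with $(s-u)+O(|\Pi_n|)$, and the subadditivity of $t\mapsto t^{\ga(\ka+\eta)}$ to split the estimate into the $\Phi^{n}$ and $\Psi^{n}$ contributions.
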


\begin{proof}
Assume $s\in(t_{i-1},t_{i}]$. If $u$ lies into $(t_{i-1},t_{i}]$ too, then $|z_{s}^{n}-z_{u}^{n}|=0$ by definition of $z^{n}$. We now assume that $u\in(t_{j-1},t_{j}]$ with $j\in\{2,\ldots,i-1\}$. Then it is readily checked that
\begin{eqnarray*}
z_{s}^{n}-z_{u}^{n}
&=&
\frac{1}{|\Pi_{n}|} \lp \int_{t_{i-1}}^{t_{i}} \si(y_{r}) \, dr  - \int_{t_{j-1}}^{t_{j}} \si(y_{r}) \, dr\rp \\
&=&
\frac{1}{|\Pi_{n}|} \int_{t_{j-1}}^{t_{j}} \lp \si(y_{r+t_{i-1}-t_{j-1}}) - \si(y_{r}) \rp \, dr.
\end{eqnarray*}
Therefore, thanks to Lemma \ref{lem:psigma} we obtain
\begin{eqnarray*}
\big|z_{s}^{n}-z_{u}^{n}\big|
&\lesssim&\cn_{\ka,\si}
\frac{1}{|\Pi_{n}|} \int_{t_{j-1}}^{t_{j}} 
\lp |y_{r+t_{i-1}-t_{j-1}}|^{-\eta} + |y_{r}|^{-\eta} \rp
\lln y_{r+t_{i-1}-t_{j-1}} - y_{r} \rrn^{\ka+\eta} \, dr \\
&\lesssim&\cn_{\ka,\si}
\frac{\|y\|_{\ga}^{\ka+\eta} \, |t_{i-1}-t_{j-1}|^{\ga(\ka+\eta)}}{|\Pi_{n}|} \int_{t_{j-1}}^{t_{j}} 
\lp |y_{r+t_{i-1}-t_{j-1}}|^{-\eta} + |y_{r}|^{-\eta} \rp \, dr,
\end{eqnarray*}
from which we derive
\begin{multline*}
\big|z_{s}^{n}-z_{u}^{n}\big|
\lesssim\frac{\|y\|_{\ga}^{\ka+\eta}(s-u+|\Pi_{n}|)^{\ga(\ka+\eta)}}{|\Pi_n|} \\
\times\sum_{2\le j<i\le n}
\left(\int_{t_{i-1}}^{t_i}|y_{r}|^{-\eta}dr +\int_{t_{j-1}}^{t_j}|y_{r}|^{-\eta}dr\right) \1_{(t_{j-1},t_j]}(u)\1_{(t_{i-1},t_i]}(s).
\end{multline*}
Our claim is now easily deduced.
\end{proof}

The next result will help to handle some of the terms appearing in Lemma \ref{lem:4.7}.
\begin{lemma}\label{lem:4.8}
Let the assumptions of Theorem \ref{thm:lim-riemann-sums} prevail, and consider the path $\Phi^{n}:[a,b]^2\to\R_{+}$ introduced in Lemma \ref{lem:4.7}. We also introduce the following measure on $[a,b]^2$
\begin{equation}\label{eq:def-mu}
\mu(du,ds)=  (s-u) ^{-\al -1} (b-s) ^{\al +\ga -1}\mathbf{1}_{\{u<s\}}  \, duds ,
\end{equation}
where  $\alpha$  is  such that $1-\ga<\alpha< \ga (\ka +\eta)$. Then $\Phi^{n}$ converges to zero in $L^1([a,b]^2, \mu)$, as $n\rightarrow\infty$.
\end{lemma}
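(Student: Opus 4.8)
The plan is to bound the $L^1(\mu)$ norm of $\Phi^n$ by a direct computation and show it vanishes as $n\to\infty$. Since $\Phi^n\ge0$, one has $\|\Phi^n\|_{L^1([a,b]^2,\mu)}=\int_{[a,b]^2}\Phi^n_{u,s}\,\mu(du,ds)$, and it suffices to prove that this integral tends to $0$. Setting $A_i:=\int_{t_{i-1}}^{t_i}|y_r|^{-\eta}\,dr$, the starting observation is that on the cell $(t_{j-1},t_j]\times(t_{i-1},t_i]$ with $j<i$ the integrand $\Phi^n$ equals the constant $|\Pi_n|^{\ga(\ka+\eta)-1}(A_i+A_j)$, and that $u<s$ holds automatically there; hence $\int\Phi^n\,d\mu$ is a double sum over $2\le j<i\le n$ of these constants against the $\mu$-mass of each cell. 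I would split this into an $A_i$-contribution and an $A_j$-contribution.

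For the $A_i$-contribution I would sum over $j$ first. For fixed $i$ the intervals $(t_{j-1},t_j]$, $j=2,\dots,i-1$, tile $(a,t_{i-1}]$, so the $u$-integrals telescope into $\int_a^{t_{i-1}}(s-u)^{-\al-1}\,du\le\al^{-1}(s-t_{i-1})^{-\al}$, the strict ordering $u\le t_{i-1}<s$ keeping the kernel integrable. Bounding $(b-s)^{\al+\ga-1}$ by the constant $(b-a)^{\al+\ga-1}$, which is legitimate since $\al+\ga-1>0$ by $\al>1-\ga$, and using $\int_{t_{i-1}}^{t_i}(s-t_{i-1})^{-\al}\,ds=(1-\al)^{-1}|\Pi_n|^{1-\al}$ (here $\al<1$), this contribution is $\lesssim|\Pi_n|^{\ga(\ka+\eta)-1}|\Pi_n|^{1-\al}\sum_iA_i=|\Pi_n|^{\ga(\ka+\eta)-\al}\sum_iA_i$. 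The $A_j$-contribution is symmetric: summing over $i$ first tiles $(t_j,b]$, one bounds $\int_{t_j}^b(s-u)^{-\al-1}\,ds\le\al^{-1}(t_j-u)^{-\al}$ together with $(b-s)^{\al+\ga-1}\lesssim1$, and integrates $\int_{t_{j-1}}^{t_j}(t_j-u)^{-\al}\,du=(1-\al)^{-1}|\Pi_n|^{1-\al}$, yielding the same bound $|\Pi_n|^{\ga(\ka+\eta)-\al}\sum_jA_j$.

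Since $y\in\cac^{\ga}_{\eta}([0,T];\R^m)$ means $|y|^{-1}\in L^{\eta}([0,T])$, i.e. $|y|^{-\eta}\in L^1$, one has $\sum_iA_i\le\int_a^b|y_r|^{-\eta}\,dr<\infty$ uniformly in $n$; therefore both contributions, and hence $\|\Phi^n\|_{L^1(\mu)}$, are $\lesssim|\Pi_n|^{\ga(\ka+\eta)-\al}$. It then remains to invoke the admissible range of $\al$: the choice $1-\ga<\al<\ga(\ka+\eta)$ forces the exponent $\ga(\ka+\eta)-\al$ to be strictly positive, and since $|\Pi_n|=(b-a)/(n-1)\to0$ we conclude $\|\Phi^n\|_{L^1(\mu)}\to0$.

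I expect the only real, and rather mild, difficulty to be the bookkeeping of the double sum together with the near-diagonal singularity of the kernel $(s-u)^{-\al-1}$. The point that resolves it is that the strict inequality $j<i$ separates $u$ from $s$, so that after reordering the summation each $u$- or $s$-integral of the singular kernel contributes only the harmless factor $|\Pi_n|^{1-\al}$ (using $\al<1$), while the decisive gain comes from the surplus $\ga(\ka+\eta)-\al>0$. It is worth noting that the existence of an admissible $\al$ is exactly what the hypothesis $\eta>\frac{1-\ga(1+\ka)}{\ga}$ guarantees, since this is equivalent to $\ga(\ka+\eta)>1-\ga$; and, in contrast with Proposition \ref{prop:integral}(ii), only the plain integrability $|y|^{-1}\in L^{\eta}$ is used here.
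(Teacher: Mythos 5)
Your proof is correct and arrives at exactly the same quantitative bound as the paper, namely $\|\Phi^{n}\|_{L^{1}([a,b]^2,\mu)}\lesssim |\Pi_n|^{\gamma(\kappa+\eta)-\alpha}\int_a^b|y_r|^{-\eta}\,dr$, with the conclusion then following from $\gamma(\kappa+\eta)-\alpha>0$; the difference lies in how the off-diagonal double sum (the only delicate point) is organized. The paper keeps the sum over pairs $(j,i)$ and splits it by adjacency: the terms $j=i-1$, where the kernel singularity sits, are handled by noting that the cell integral of $(s-u)^{-1-\alpha}$ is of order $|\Pi_n|^{1-\alpha}$, while the terms $j\le i-2$ are rescaled to unit cells, the kernel is bounded by $(i-j-1)^{-1-\alpha}$, and the convergent series $\sum_k k^{-1-\alpha}$ is invoked. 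You instead split the weight $A_i+A_j$ into its two halves and, for each half, apply Tonelli to sum the complementary index first: since the cells tile $(a,t_{i-1}]$ (resp.\ $(t_j,b]$), the kernel integrals telescope into
\begin{equation*}
\int_a^{t_{i-1}}(s-u)^{-1-\alpha}\,du\le \alpha^{-1}(s-t_{i-1})^{-\alpha},
\qquad\text{resp.}\qquad
\int_{t_j}^{b}(s-u)^{-1-\alpha}\,ds\le \alpha^{-1}(t_j-u)^{-\alpha},
\end{equation*}
and the remaining one-dimensional integral contributes $(1-\alpha)^{-1}|\Pi_n|^{1-\alpha}$, using $\alpha<1$. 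This removes both the adjacent/non-adjacent case distinction and the discrete summability argument, so your version is somewhat more economical; the paper's version, in exchange, displays explicitly that the $\mu$-mass seen by $\Phi^n$ concentrates on near-diagonal cells, which is the same phenomenon your telescoping absorbs into an exact integration. You also correctly isolate where each hypothesis enters: $\alpha+\gamma-1>0$ to treat $(b-s)^{\alpha+\gamma-1}$ as a constant, $|y|^{-\eta}\in L^1$ (and nothing stronger) to control $\sum_i A_i$ uniformly in $n$, and $\eta>\frac{1-\gamma(1+\kappa)}{\gamma}$ to guarantee that an admissible $\alpha$ exists.
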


\begin{proof}
We can write
\begin{multline}\label{eq:bnd-Phi-L1}
\|\Phi^{n}\|_{L^{1}([a,b]^2,\mu)} \lesssim 
|\Pi_n|^{-1+\ga(\ka+\eta)}
\sum_{2\le j<i\le n}
\left(\int_{t_{i-1}}^{t_i}|y_{r}|^{-\eta}dr 
+\int_{t_{j-1}}^{t_j}|y_{r}|^{-\eta}dr\right) \\
\times\int_{t_{i-1}}^{t_i}\int_{t_{j-1}}^{t_j}(s-u)^{-1-\alpha}duds  
\lesssim 
I_{1}^{n}+I_{2}^{n},
\end{multline}
where
\[
I_{1}^{n}=
|\Pi_n|^{-1+\ga(\ka+\eta)}
\sum_{i=3}^n
\left(\int_{t_{i-1}}^{t_i}|y_{r}|^{-\eta}dr 
+\int_{t_{i-2}}^{t_{i-1}}|y_{r}|^{-\eta}dr\right)
\int_{t_{i-1}}^{t_i}\int_{t_{i-2}}^{t_{i-1}}(s-u)^{-1-\alpha}duds  
\]
and
\[
I_{2}^{n}=
|\Pi_n|^{-1+\ga(\ka+\eta)}
\sum_{i=4}^n\sum_{j=2}^{i-2}
\left(\int_{t_{i-1}}^{t_i}|y_{r}|^{-\eta}dr 
+\int_{t_{j-1}}^{t_j}|y_{r}|^{-\eta}dr\right)
\int_{t_{i-1}}^{t_i}\int_{t_{j-1}}^{t_j}(s-u)^{-1-\alpha}duds.
\]
We now bound the terms $I_{1}^{n}$ and $I_{2}^{n}$ separately.

It is easily seen from the expression of $I_{1}^{n}$ that
\begin{equation*}
I_1^{n}\lesssim |\Pi_n|^{\ga(\ka+\eta)-\alpha}
\sum_{i=2}^n
\int_{t_{i-1}}^{t_i}|y_{r}|^{-\eta}dr =|\Pi_n|^{\ga(\ka+\eta)-\alpha}
\int_{a}^{b}|y_{r}|^{-\eta}dr.
\end{equation*}
Hence, due to the fact that $\ga(\ka+\eta)-\alpha>0$, we obtain $\lim_{n\to\infty}I_{1}^{n}=0$.

As far as $I_{2}^{n}$ is concerned, a simple scaling argument entails
\begin{equation*}
I_2^{n}\lesssim
|\Pi_n|^{\ga(\ka+\eta)-\alpha}
\sum_{i=4}^n\sum_{j=2}^{i-2}
\left(\int_{t_{i-1}}^{t_i}|y_{r}|^{-\eta}dr 
+\int_{t_{j-1}}^{t_j}|y_{r}|^{-\eta}dr\right) 
\int_{i-1}^{i}\int_{j-1}^j(s-u)^{-1-\alpha}duds,
\end{equation*}
and roughly bounding the term $s-u$ by $i-j-1$ in the integral above, we get
\begin{eqnarray*}
I_2^{n}
&\lesssim &|\Pi_n|^{\ga(\ka+\eta)-\alpha}
\sum_{i=4}^n\sum_{j=2}^{i-2}
\left(\int_{t_{i-1}}^{t_i}|y_{r}|^{-\eta}dr 
+\int_{t_{j-1}}^{t_j}|y_{r}|^{-\eta}dr\right) (i-j-1)^{-1-\alpha}\\
&\lesssim &|\Pi_n|^{\ga(\ka+\eta)-\alpha}
\sum_{i=2}^n\int_{t_{i-1}}^{t_i}|y_{r}|^{-\eta}dr
\sum_{k=1}^{n-1}k^{-1-\alpha}
\lesssim |\Pi_n|^{\ga(\ka+\eta)-\alpha}\int_{a}^{b}|y_{r}|^{-\eta}dr.
\end{eqnarray*}
We thus get $\lim_{n\to\infty}I_{2}^{n}=0$, again according to the fact that $\ga(\ka+\eta)-\alpha>0$.

Finally, taking into account $\lim_{n\to\infty}I_{1}^{n}=0$, $\lim_{n\to\infty}I_{2}^{n}=0$ and relation \eqref{eq:bnd-Phi-L1}, our claim is now proved.
\end{proof}

Still having in mind a bound on the terms of  Lemma \ref{lem:4.7}, we state the following intermediate result.
\begin{lemma}\label{lem:a4.9}
 Assume the hypotheses of Lemma \ref{lem:4.8} hold true, and recall that $\Psi^{n}$ is introduced in Lemma \ref{lem:4.7}.
Then as $n\to\infty$, $\Psi^{n}$ converges in $L^1([a,b]^2, \mu)$ to the function $\Psi$ defined as follows
\begin{equation*}
\Psi_{u,s}=
\left(|y_s|^{-\eta}+|y_{u}|^{-\eta}\right) (s-u)^{\ga(\ka+\eta)} \mathbf{1}_{\{u<s\}}.
\end{equation*}
\end{lemma}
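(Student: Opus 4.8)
The plan is to reduce the whole statement to the $L^{1}([a,b])$-convergence of a piecewise-constant averaging operator. Write $g_{r}:=|y_{r}|^{-\eta}$; since the hypotheses of Theorem \ref{thm:lim-riemann-sums} include $|y|^{-1}\in L^{\eta}([0,T])$, we have $g\in L^{1}([a,b])$. Introduce the block average of $g$ along the partition $\Pi_{n}$,
\[
\bar g^{n}_{s}=\sum_{i=2}^{n}\frac{1}{|\Pi_{n}|}\left(\int_{t_{i-1}}^{t_{i}}g_{r}\,dr\right)\1_{(t_{i-1},t_{i}]}(s),\qquad s\in[a,b].
\]
Reading off the definition of $\Psi^{n}$ in Lemma \ref{lem:4.7}, for $u<s$ lying in two distinct blocks we have $\Psi^{n}_{u,s}=(s-u)^{\ga(\ka+\eta)}\big(\bar g^{n}_{s}+\bar g^{n}_{u}\big)$, whereas $\Psi_{u,s}=(s-u)^{\ga(\ka+\eta)}(g_{s}+g_{u})$; moreover $\Psi^{n}_{u,s}=0$ as soon as $u$ and $s$ sit in the same block. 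This identification is what makes the convergence transparent.

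Next I would split the region $\{a\le u<s\le b\}$ into the off-diagonal set $D^{n}_{\mathrm{off}}$, where $u$ and $s$ belong to different blocks of $\Pi_{n}$, and the diagonal set $D^{n}_{\mathrm{diag}}$, where they share a block. On $D^{n}_{\mathrm{off}}$ we have $\Psi^{n}_{u,s}-\Psi_{u,s}=(s-u)^{\ga(\ka+\eta)}\big[(\bar g^{n}_{s}-g_{s})+(\bar g^{n}_{u}-g_{u})\big]$, and I would bound the $\mu$-integral of each of the two resulting terms by integrating out the variable that does not carry the difference. For the $s$-term one integrates $(s-u)^{\ga(\ka+\eta)-\al-1}$ over $u\in(a,s)$, which is finite because $\al<\ga(\ka+\eta)$ and leaves a bounded factor $(s-a)^{\ga(\ka+\eta)-\al}$; the remaining weight $(b-s)^{\al+\ga-1}$ is bounded since $\al>1-\ga$. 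For the $u$-term one integrates the Beta-type kernel $(s-u)^{\ga(\ka+\eta)-\al-1}(b-s)^{\al+\ga-1}$ over $s\in(u,b)$, which is finite for the same two reasons and produces the bounded factor $(b-u)^{\ga(1+\ka+\eta)-1}$, nonnegative precisely because $\ga(1+\ka+\eta)>1$ (equivalently $\eta>\tfrac{1-\ga(1+\ka)}{\ga}$). In both cases the outcome is $\lesssim\|\bar g^{n}-g\|_{L^{1}([a,b])}$.

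On $D^{n}_{\mathrm{diag}}$ we simply have $\Psi^{n}=0$, so the contribution equals $\int_{D^{n}_{\mathrm{diag}}}\Psi\,d\mu$. Here $s-u\le|\Pi_{n}|$ on every diagonal block, so integrating the singular factor $(s-u)^{\ga(\ka+\eta)-\al-1}$ over the block gains a positive power $|\Pi_{n}|^{\ga(\ka+\eta)-\al}$; summing over blocks against the bounded weight $(b-s)^{\al+\ga-1}$ gives a bound $\lesssim|\Pi_{n}|^{\ga(\ka+\eta)-\al}\int_{a}^{b}g_{r}\,dr=|\Pi_{n}|^{\ga(\ka+\eta)-\al}\|g\|_{L^{1}}$, which tends to $0$ since $\ga(\ka+\eta)-\al>0$.

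The one genuinely analytic input, and the step I expect to be the main obstacle, is the claim $\|\bar g^{n}-g\|_{L^{1}([a,b])}\to0$. Because $g=|y|^{-\eta}$ is merely integrable (it is singular exactly where $y$ meets the origin), no pointwise or uniform statement is available, and the uniform partitions $\Pi_{n}$ are not nested, so a direct martingale argument does not apply. Instead I would use that the averaging operators $g\mapsto\bar g^{n}$ are $L^{1}$-contractions together with the density of $\cac([a,b])$ in $L^{1}([a,b])$: the convergence is immediate (in fact uniform) for continuous $g$, and the contraction property upgrades it to all of $L^{1}$ via a standard three-$\varepsilon$ estimate. Combining this with the off-diagonal and diagonal bounds above yields $\|\Psi^{n}-\Psi\|_{L^{1}([a,b]^{2},\mu)}\to0$, which is the assertion.
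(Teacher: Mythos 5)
Your proof is correct and follows the same route the paper intends: the paper's own proof is a one-line assertion that the lemma is an ``immediate consequence'' of $|y|^{-\eta}\in L^1([a,b])$, $\al+\ga-1>0$ and $\ga(\ka+\eta)>\al$ --- precisely the three facts on which your argument is built. Your write-up simply supplies the details the paper omits, most notably the $L^1([a,b])$-convergence of the block averages $\bar g^{n}\to g$ via the contraction property plus density of continuous functions, together with the off-diagonal/diagonal splitting that converts this into convergence in $L^1([a,b]^2,\mu)$.
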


\begin{proof}
The result is an immediate consequence of the fact that $|y|^{-\eta}\in L^1([a,b])$, together with the conditions
$\alpha+\ga-1>0$ and $\ga(\ka+\eta)>\al$.
\end{proof}

We are now ready to give the proof of Theorem \ref{thm:lim-riemann-sums}.
 
\begin{proof}[Proof of Theorem \ref{thm:lim-riemann-sums}] Let $\alpha$  be such that $1-\ga<\alpha< \ga (\ka +\eta)$. Owing to \eqref{eq:dfracx} we can write
\[
\lln\int_a^b \lp z_{s}^{n} - \si(y_{s}) \rp \, dx_s \rrn
=
\left|\int_{a}^{b} \left[D^{\alpha}_{a+}\left(\sigma(y)-z^{n}\right)\right]_s
D^{1-\alpha}_{b-}x^{b-}_s ds\right|
\lesssim
 \|x\|_\ga \left( L_{1}^{n}+L_{2}^{n}\right),
\]
where
\[
L_{1}^{n}=  \int_{a}^{b}\frac{\left|\sigma(y_s)-z^{n}_s\right|}{
(s-a)^{\alpha}}(b-s)^{\alpha+\ga-1}ds 
\]
and
\[
L_{2}^{n}=
\int_{a}^{b}\left(\int_{a}^{s}\frac{\Big|\sigma(y_s)
-z^{n}_s-\big(\sigma(y_{u})-z^{n}_u\big)\Big|}{(s-u)^{\alpha+1}}
du\right)(b-s)^{\alpha+\ga-1}ds.
\]
Moreover, notice that invoking Lemma \ref{lem:aux2} we can deduce that $L_{1}^{n}\lesssim
 \cn_{\ka,\si} \|y\|_{\ga}^{\ka} |\Pi_n|^{\ka\ga}$.
Therefore $L_{1}^{n}$ goes to zero as $n\rightarrow\infty$. Thus, in order to finish the
proof we only need to see that $L_{2}^{n}$ converges to zero as $n\rightarrow\infty$.

In order to study the limit of $L_{2}^{n}$, first notice that thanks to Lemma \ref{lem:aux2} we can write 
\begin{equation}  \label{15}
\left|\sigma(y_s)-z^{n}_s-\left(\sigma(y_u)-z^{n}_u\right)\right|
\le
\left|\sigma(y_s)-z^{n}_s\right|+\left|\sigma(y_u)-z^{n}_u\right|
\lesssim \cn_{\ka,\si} \|y\|_{\ga}^{\ka}  |\Pi_{n}|^{\ka\ga},
\end{equation}
which implies that the integrand in $L_{2}^{n}$ converges to zero as $n$ tends to infinity, for each $u$ and $s$ such that $ a \le u<s \le b$.
On the other hand, we can also bound the 
  rectangular increment $\sigma(y_s)-z^{n}_s-(\sigma(z_{u})-z^{n}_u)$ as follows
\begin{equation}
\left|\sigma(y_s)-z^{n}_s-\left(\sigma(y_u)-z^{n}_u\right)\right|
\le
\left|\sigma(y_s)-\sigma(y_u)\right|+\left|z^{n}_s-z^{n}_u\right|. \label{eq:bnd-rect-incr-2}
\end{equation}
Lemma \ref{lem:psigma}  plus the fact that $y\in\cac_{\eta}^{\ga}$ imply that
\begin{equation*}
\left|\sigma(y_s)-\sigma(y_u)\right|
\lesssim
\left(|y_{s}|^{-\eta}+|y_{u}|^{-\eta}\right)
\left|y_{s}-y_{u}\right|^{\ka+\eta}
\lesssim
\left(|y_{s}|^{-\eta}+|y_{u}|^{-\eta}\right)
(s-u)^{(\ka+\eta)\ga}.
\end{equation*}
Since $(\ka+\eta)\ga>\al$, we get that the term  $\left|\sigma(y_s)-\sigma(y_u)\right|$  is integrable
in $[a,b]^2$ with respect to the measure $\mu(du,ds)=  (s-u) ^{-\al -1} (b-s) ^{\al +\ga -1}\mathbf{1}_{\{u<s\}} duds$ introduced in equation \eqref{eq:def-mu}. Moreover,  the term  
$\left|z^{n}_s-z^{n}_u\right|$ is bounded, up to a constant, by $\Phi^n_{u,s}
+\Psi_{u,s}^n$ (see Lemma \ref{lem:4.7}).
 Applying the dominated convergence theorem as stated in \cite[Theorem 11.4.18]{Roy},
 together with Lemmas \ref{lem:4.8} and~\ref{lem:a4.9},
  we deduce that 
 $L_{2}^{n}$ tends to $0$ as $n $ tends to infinity, which finishes the proof.
\end{proof}

\section{One-dimensional differential equations} 
The purpose of this section is to obtain existence results for the system \eqref{eq:sde-power} in dimension 1, that is for the following equation
\begin{equation}\label{eq:mequ}
y_t=\int_0^t\sigma(y_s)dx_s
,\quad t\ge0.
\end{equation}
We now give a general condition on the coefficient $\si$ in \eqref{eq:mequ}. Notice that a basic example of a function $\sigma$ satisfying Hypothesis  \ref{hyp:nbsigma} below is any power coefficient of the form  $\sigma(\xi)=C|\xi|^{\ka}$, where $\kappa <1$. 

\begin{hypothesis}\label{hyp:nbsigma}
We suppose that $\si :\mathbb{R}\rightarrow  \mathbb{R}$ satisfies Hypothesis \ref{hyp:sigma}, and moreover
\begin{itemize}
\item[(i)]
$\si$ is continuous and   increasing on $\mathbb{R}_{+}$.
\item[(ii)]
$1/\si$ is integrable on compact neighborhoods of zero.
\end{itemize}
\end{hypothesis}

With Hypothesis \ref{hyp:nbsigma} in mind, we shall solve equation \eqref{eq:mequ} thanks to an approximation procedure. We first state the following lemma, whose elementary proof is left to the reader.
\begin{lemma}\label{lem:bar-sigma-m}
For $\si$ satisfying Hypothesis \ref{hyp:nbsigma} and $n\in\mathbb{N}$, define the following function on~$\R$
\begin{equation*}
\si_{n}(\xi) = \left\{ \begin{array}{cc}  \si(\xi), & | \xi | > 2^{-n},   \\  \si(2^{-n}), & |\xi| \leq  2^{-n}.\end{array} \right.
\end{equation*}
Then $\si_{n}$ satisfies (\ref{eq:dsigma}), with 
$ \cn_{\ka,\si_{n}} \le\cn_{\ka,\si}$, where 
$\cn_{\ka,\si}$ is given in (\ref{eq:def-holder-semi-norm}).
\end{lemma}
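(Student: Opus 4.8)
The plan is to prove directly the quantitative claim $\cn_{\ka,\si_{n}}\le\cn_{\ka,\si}$, from which the qualitative assertion that $\si_{n}$ satisfies \eqref{eq:dsigma} follows at once, since $\cn_{\ka,\si}<\infty$ is exactly the content of Hypothesis \ref{hyp:sigma}. In view of the definition \eqref{eq:def-holder-semi-norm} of the seminorm, it suffices to fix an arbitrary pair $\xi_1,\xi_2\in\R$ with $|\xi_1|\ne|\xi_2|$ and establish
\[
|\si_{n}(\xi_2)-\si_{n}(\xi_1)|\le \cn_{\ka,\si}\,\big||\xi_2|^{\ka}-|\xi_1|^{\ka}\big|;
\]
passing to the supremum then yields the bound on $\cn_{\ka,\si_{n}}$. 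By the symmetry of this inequality in $\xi_1,\xi_2$ I may assume $|\xi_1|\le|\xi_2|$, and I would organize the estimate according to the position of $|\xi_1|,|\xi_2|$ relative to the truncation level $2^{-n}$.

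In the regime $2^{-n}<|\xi_1|<|\xi_2|$ one has $\si_{n}(\xi_i)=\si(\xi_i)$ for $i=1,2$, so the desired inequality is precisely the definition of $\cn_{\ka,\si}$. In the regime $|\xi_1|<|\xi_2|\le 2^{-n}$ both values equal $\si(2^{-n})$ and the left-hand side vanishes. The only substantial case is the mixed one, $|\xi_1|\le 2^{-n}<|\xi_2|$, where $\si_{n}(\xi_1)=\si(2^{-n})$ while $\si_{n}(\xi_2)=\si(\xi_2)$. Applying the definition \eqref{eq:def-holder-semi-norm} to the admissible pair consisting of $\xi_2$ and the point $2^{-n}\in\R$ (admissible because $|\xi_2|>2^{-n}$), I would obtain
\[
|\si_{n}(\xi_2)-\si_{n}(\xi_1)|=|\si(\xi_2)-\si(2^{-n})|\le \cn_{\ka,\si}\,\big(|\xi_2|^{\ka}-(2^{-n})^{\ka}\big),
\]
and it then remains to compare this right-hand side with $\cn_{\ka,\si}\big(|\xi_2|^{\ka}-|\xi_1|^{\ka}\big)$.

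The heart of the argument, and the only point that is not pure bookkeeping, is this final comparison: truncation replaces the left endpoint $|\xi_1|$ by the larger value $2^{-n}$, which shrinks the denominator $||\xi_2|^{\ka}-|\xi_1|^{\ka}|$, so I must check that it shrinks the numerator at least as much. Since $t\mapsto t^{\ka}$ is increasing and $|\xi_1|\le 2^{-n}<|\xi_2|$, we have $|\xi_1|^{\ka}\le(2^{-n})^{\ka}<|\xi_2|^{\ka}$, whence
\[
|\xi_2|^{\ka}-(2^{-n})^{\ka}\le |\xi_2|^{\ka}-|\xi_1|^{\ka}=\big||\xi_2|^{\ka}-|\xi_1|^{\ka}\big|,
\]
which closes the mixed case. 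Combining the three regimes and taking the supremum gives $\cn_{\ka,\si_{n}}\le\cn_{\ka,\si}$, hence \eqref{eq:dsigma} for $\si_{n}$. I would finally note that only the finiteness of $\cn_{\ka,\si}$, that is Hypothesis \ref{hyp:sigma} itself, is used in this argument; the monotonicity in Hypothesis \ref{hyp:nbsigma}(i) and the integrability of $1/\si$ in (ii) play no role in the present lemma.
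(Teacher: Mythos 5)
Your proof is correct. The paper itself gives no proof of this lemma (it is explicitly ``left to the reader''), and your argument---splitting into the three regimes relative to the truncation level $2^{-n}$, with the only nontrivial point being that replacing $|\xi_1|$ by $2^{-n}$ shrinks $\bigl||\xi_2|^{\ka}-|\xi_1|^{\ka}\bigr|$ while the definition of $\cn_{\ka,\si}$ applied to the pair $(\xi_2,2^{-n})$ controls the numerator---is precisely the elementary argument the authors intend; your closing observation that only the finiteness of $\cn_{\ka,\si}$ is used (and not the monotonicity or integrability assumptions of Hypothesis \ref{hyp:nbsigma}) is also accurate.
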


We shall construct a solution to equation \eqref{eq:mequ} by means of a Lamperti type transformation 
 for $\si$. This transform is classically defined in the following way.

\begin{lemma}\label{prop:invphis}
Let $\si$ be a function fulfilling Hypothesis \ref{hyp:nbsigma} and $\si_{n}$ be defined as in Lemma~\ref{lem:bar-sigma-m}.  For those two functions and $\xi\in\R$, we set
\begin{equation}\label{eq:nphis}
\phi(\xi)=\int_0^{\xi}\frac{ds}{\si(s)}
\quad\mbox{and}\quad
 \phi_{n}(\xi)=\int_0^{\xi}\frac{ds}{ \sigma_{n}(s)}.
\end{equation}
Then $ \phi $ and $ \phi_{n}$ are both   invertible, and  for any  $\xi \in\R$ we have
$| \phi^{-1}(\xi)|   \le   | \phi_{n}^{-1}(\xi)| $, where $\phi^{-1}$, $\phi_{n}^{-1}$ stand for the respective inverse of $\phi$ and $\phi_{n}$.
\end{lemma}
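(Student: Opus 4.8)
The plan is to show that both $\phi$ and $\phi_n$ are odd, continuous, strictly increasing bijections of $\R$ onto $\R$, and then to compare them pointwise so as to transfer the resulting inequality to their inverses.

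First I would record the basic structure of the two maps. Since we are in dimension one, the radial property noted in Remark~\ref{re:2.2} means that $\si$ is even, hence $1/\si$ and $1/\si_n$ are even and the primitives $\phi,\phi_n$ are \emph{odd}. The map $\phi$ is well defined and continuous on all of $\R$: near the origin this is exactly the integrability assumption (ii) of Hypothesis~\ref{hyp:nbsigma}, while away from the origin $1/\si$ is continuous and bounded. For $\phi_n$ there is no singularity at all, since $\si_n(s)=\si(2^{-n})>0$ for $|s|\le 2^{-n}$, so $1/\si_n$ is bounded. Both integrands are strictly positive off the origin, so $\phi$ and $\phi_n$ are strictly increasing, hence injective. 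Finally, from $\si(0)=0$ and \eqref{eq:dsigma} we get $|\si(s)|\lesssim |s|^{\ka}$, so that $1/\si(s)\gtrsim |s|^{-\ka}$ with $\ka<1$, which is not integrable at infinity; thus $\phi(\xi)\to\pm\infty$ as $\xi\to\pm\infty$, and the same holds for $\phi_n$ since $\si_n=\si$ for large arguments. Consequently both are bijections of $\R$, so $\phi^{-1}$ and $\phi_n^{-1}$ are defined on all of $\R$, which is what makes the claimed inequality meaningful for every $\xi$.

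The heart of the argument is a pointwise comparison of the two integrands on $\R_+$. For $s>2^{-n}$ we have $\si_n(s)=\si(s)$, while for $0<s\le 2^{-n}$ monotonicity of $\si$ on $\R_+$ (item (i) of Hypothesis~\ref{hyp:nbsigma}) gives $\si_n(s)=\si(2^{-n})\ge \si(s)>0$. In either case $0<\si(s)\le\si_n(s)$, hence $1/\si_n(s)\le 1/\si(s)$ for all $s>0$. Integrating from $0$ to $\xi>0$ yields $\phi_n(\xi)\le\phi(\xi)$, and by oddness $|\phi_n(\xi)|\le|\phi(\xi)|$ for every $\xi\in\R$. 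To transfer this to the inverses, I would fix $\xi>0$ and set $y=\phi^{-1}(\xi)$, $y_n=\phi_n^{-1}(\xi)$; since $\phi,\phi_n$ are increasing and vanish at $0$, both $y,y_n$ are positive, and applying the pointwise inequality at $y$ together with $\phi(y)=\xi=\phi_n(y_n)$ gives $\phi_n(y)\le\phi(y)=\phi_n(y_n)$. Monotonicity of $\phi_n$ then forces $y\le y_n$, i.e. $|\phi^{-1}(\xi)|\le|\phi_n^{-1}(\xi)|$. The case $\xi<0$ is identical after invoking oddness, and $\xi=0$ is trivial.

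I expect the only genuinely delicate point to be the bookkeeping around the domains of the inverses: one must verify that both $\phi$ and $\phi_n$ are surjective onto $\R$ (the divergence at infinity coming from $\ka<1$) so that $\phi^{-1}$ and $\phi_n^{-1}$ are simultaneously defined, and one must keep the direction of the inequality consistent through the reduction by oddness. Everything else reduces to the elementary monotonicity comparison $1/\si_n\le 1/\si$ on $\R_+$.
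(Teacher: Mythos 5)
Your proof is correct and follows essentially the same route as the paper: the paper's (one-line) argument rests on exactly the comparison you establish, namely $\phi_n \le \phi$ on $\R_+$ and $\phi \le \phi_n$ on $\R_-$, which follow from $\si \le \si_n$ via the monotonicity of $\si$ on $\R_+$ and the radial (even) structure from Remark \ref{re:2.2}. Your additional verification that both maps are bijections of $\R$ (divergence at infinity from $\ka<1$) and the explicit transfer of the inequality to the inverses are details the paper leaves implicit, so there is no substantive difference in approach.
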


\begin{proof} The result is an immediate consequence of the inequalities 	
$ 	 \phi_{n}  	\le	 \phi $ on $ \mathbb{R}_+$ and 
$ 	 \phi 	\le	{\phi}_{n} $ on $\mathbb{R} _-$,
which follow from our definition (\ref{eq:nphis}).
\end{proof}

 The next result states the uniform (in $n$) Lipschitz regularity of $\phi_{n}^{-1}$.

\begin{lemma}\label{lem:nblip} 
Let $M>0$. Then, there is a constant $ c_M
 > 0$ such that 
$$|\phi_{n}^{-1}(\xi_{1})   -   \phi_{n}^{-1}(\xi_{2})|    \le      c_M
 |\xi_{1} - \xi_{2}|,$$
for all $\xi_1$ and $\xi_2$ such that $|\xi_{1}|, |\xi_{2}| \le  M $ and  for all $ n \in \mathbb{N}$.
\end{lemma}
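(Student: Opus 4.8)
The plan is to work with the explicit derivative of the inverse function. By Lemma~\ref{prop:invphis} the map $\phi_{n}$ of \eqref{eq:nphis} is invertible, and since $\si_{n}$ is continuous and strictly positive on $\R$ (it coincides with $\si$ off $[-2^{-n},2^{-n}]$ and equals the positive constant $\si(2^{-n})$ on that interval), $\phi_{n}$ is in fact a $C^{1}$ increasing bijection with $\phi_{n}'(\eta)=1/\si_{n}(\eta)$. Hence $\phi_{n}^{-1}$ is $C^{1}$ and, differentiating $\phi_{n}(\phi_{n}^{-1}(\xi))=\xi$, one obtains $(\phi_{n}^{-1})'(\xi)=\si_{n}(\phi_{n}^{-1}(\xi))$. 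The Lipschitz bound will follow by integrating this derivative, so the whole problem reduces to bounding $\si_{n}(\phi_{n}^{-1}(\xi))$ uniformly in $n$ for $|\xi|\le M$.

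The key step is a uniform a priori bound $|\phi_{n}^{-1}(\xi)|\le A_{M}$ for $|\xi|\le M$ and all $n$. Because $\si$ is radial, hence even (see Remark~\ref{re:2.2}), each $\si_{n}$ is even and $\phi_{n}$ is odd, so it suffices to control $\phi_{n}^{-1}(M)$ from above. Writing $\si_{n}(\eta)=\si(\max(|\eta|,2^{-n}))$ and using that $\si$ is increasing on $\R_{+}$, I observe that $\si_{n}\le\si_{1}$ on $\R$ for every $n\ge 1$, since $2^{-n}\le 1/2$. This yields $\phi_{n}\ge\phi_{1}$ on $\R_{+}$, and as both functions are increasing their inverses reverse the inequality, giving $\phi_{n}^{-1}(M)\le\phi_{1}^{-1}(M)=:A_{M}<\infty$ for all $n$. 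By the odd symmetry we then get $|\phi_{n}^{-1}(\xi)|\le A_{M}$ whenever $|\xi|\le M$, uniformly in $n$.

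It remains to turn this into a bound on the derivative. For $|\xi|\le M$ we have $\si_{n}(\phi_{n}^{-1}(\xi))=\si(\max(|\phi_{n}^{-1}(\xi)|,2^{-n}))\le\si(\max(A_{M},1/2))=:c_{M}$, again using that $\si$ is even, increasing on $\R_{+}$, and that $2^{-n}\le 1/2$. Consequently $|(\phi_{n}^{-1})'|\le c_{M}$ on $[-M,M]$, and integrating between $\xi_{2}$ and $\xi_{1}$ gives $|\phi_{n}^{-1}(\xi_{1})-\phi_{n}^{-1}(\xi_{2})|\le c_{M}|\xi_{1}-\xi_{2}|$ with $c_{M}$ independent of $n$.

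The only genuinely delicate point is the uniform-in-$n$ upper bound on $|\phi_{n}^{-1}|$: a naive comparison with $\phi$ in the style of Lemma~\ref{prop:invphis} runs the wrong way and only produces a lower bound, so one must instead compare with the \emph{least}-regularized coefficient $\si_{1}$, exploiting the monotonicity $\si_{n}\le\si_{1}$ for $n\ge 1$. Everything else is routine one-dimensional calculus.
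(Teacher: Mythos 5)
Your proof is correct, and it follows the same skeleton as the paper's: both arguments differentiate the inverse, $(\phi_{n}^{-1})'(\xi)=\si_{n}(\phi_{n}^{-1}(\xi))$, reduce the lemma to a bound on this derivative that is uniform in $n$ on $[-M,M]$, and conclude by the mean value theorem (equivalently, by integrating the derivative). The difference is in how the uniform-in-$n$ bound is obtained, which is indeed the heart of the matter. The paper bounds $|\si_{n}(\phi_{n}^{-1}(\xi))|\le |\si_{n}(\phi_{n}^{-1}(M))|$ by monotonicity and then simply observes that $\si_{n}(\phi_{n}^{-1}(M))\to\si(\phi^{-1}(M))$ as $n\to\infty$, so that the sequence $\{\si_{n}(\phi_{n}^{-1}(M))\}_{n}$ is bounded; this is short but produces no explicit constant, and the convergence claim itself rests on the pointwise convergence $\phi_{n}^{-1}\to\phi^{-1}$ (which the paper reuses in the proof of Theorem \ref{th1v1}). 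You instead prove a monotone domination: since $\si$ is even and increasing on $\R_{+}$, one has $\si_{n}\le\si_{1}$ for all $n\ge1$, hence $\phi_{n}\ge\phi_{1}$ on $\R_{+}$ and $\phi_{n}^{-1}(M)\le\phi_{1}^{-1}(M)$, yielding the explicit constant $c_{M}=\si(\max(\phi_{1}^{-1}(M),1/2))$. Your observation that the comparison in Lemma \ref{prop:invphis} runs the wrong way (it bounds $|\phi_{n}^{-1}|$ from below by $|\phi^{-1}|$, not from above) is accurate, and comparing against the least-regularized coefficient $\si_{1}$ is the right fix. Net effect: your argument is slightly longer but more elementary and quantitative, avoiding the limit argument entirely; the paper's is more economical but leaves the boundedness resting on an asserted convergence.
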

\begin{proof}   Suppose $|\xi|\le M$. By (\ref{eq:nphis}) and Lemma \ref{prop:invphis}, we get
\begin{equation*}
 \left| \frac{d\phi_{n}^{-1} (\xi)}{d\xi}  \right| =  \left|	\sigma_{n} (\phi_{n}^{-1} (\xi))   \right|	
\le	 \left|	\sigma_{n} (\phi_{n}^{-1} (M)   \right|.
\end{equation*}
In addition, observe that $\lim_{n\to\infty} \sigma_{n} (\phi_{n}^{-1} (M))= \sigma  (\phi ^{-1} (M)$, which means in particular that the sequence $\{\sigma_{n} (\phi_{n}^{-1} (M)), \, n\ge 1\}$ is bounded.
Thus a direct application of the mean value theorem finishes the proof.
\end{proof}

We now proceed to the approximation of equation \eqref{eq:mequ}.    

\begin{proposition}\label{prop:n3}
Suppose that  $x \in \cac^{\ga}([0,T])$ and $x_0=0$.
 Let $ n \in \mathbb{N}$ and $ y_{t}^{n} =  \phi_{n}^{-1}(x_t)$. Then $y^{n}$ solves the following equation
$$ y_{t}^{n} = \int_{0}^{t}	\si_{n}(y_{s}^{n}) \, dx_{s}, 	
\quad\text{for all}\quad 
t\ge 0,
$$
where the integral with respect to $x$ is understood in Young's sense.
\end{proposition}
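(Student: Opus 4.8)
The plan is to recognise the identity $y^{n}=\phi_{n}^{-1}(x)$ as an instance of the change-of-variables (It\^o) formula for Young integrals, applied to the $C^{1}$ map $\phi_{n}^{-1}$ along the $\ga$-H\"older path $x$ with $\ga>1/2$. First I would record the preliminary regularity facts. Since $x$ is continuous on the compact interval $[0,T]$, its range lies in some ball $\{|\xi|\le M\}$; by Lemma \ref{lem:nblip}, $\phi_{n}^{-1}$ is Lipschitz there, so $y^{n}=\phi_{n}^{-1}(x)$ belongs to $\cac^{\ga}([0,T])$. Consequently $\si_{n}(y^{n})$ is again $\ga$-H\"older (see below), and the right-hand side $\int_{0}^{t}\si_{n}(y^{n}_{s})\,dx_{s}$ is a genuine Young integral since $2\ga>1$; recall that, under these assumptions, the fractional-calculus integral \eqref{eq:def-intg-frac} coincides with the Riemann-sum Young integral. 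Moreover $\phi_{n}(0)=0$ forces $\phi_{n}^{-1}(0)=0$, so that together with $x_{0}=0$ we get $y^{n}_{0}=0$.

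Next I would compute the derivative of $\phi_{n}^{-1}$. From \eqref{eq:nphis} the map $\phi_{n}$ is $C^{1}$ with $\phi_{n}'(\xi)=1/\si_{n}(\xi)\neq0$, so the inverse function theorem gives $\phi_{n}^{-1}\in C^{1}$ and
\[
\big(\phi_{n}^{-1}\big)'(\xi)=\si_{n}\big(\phi_{n}^{-1}(\xi)\big).
\]
I would then check that this derivative is Lipschitz on bounded sets. By Lemma \ref{lem:bar-sigma-m} the truncated coefficient $\si_{n}$ still obeys \eqref{eq:dsigma}; since $\xi\mapsto|\xi|^{\ka}$ is Lipschitz on $\{|\xi|\ge 2^{-n}\}$ (its slope being $\lesssim 2^{n(1-\ka)}$) while $\si_{n}$ is constant on $\{|\xi|\le 2^{-n}\}$, the function $\si_{n}$ is globally Lipschitz. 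Composing with the Lipschitz map $\phi_{n}^{-1}$ (Lemma \ref{lem:nblip}) shows that $(\phi_{n}^{-1})'$ is Lipschitz on bounded sets. This is exactly the gain afforded by the truncation: replacing the singular $\si$ by $\si_{n}$ turns the coefficient into a Lipschitz one.

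With these properties in hand I would invoke the Young change-of-variables formula, in the fractional-calculus formulation of \cite{Za}, for the $C^{1}$ function $F=\phi_{n}^{-1}$ with locally Lipschitz derivative, evaluated along $x$:
\[
\phi_{n}^{-1}(x_{t})-\phi_{n}^{-1}(x_{0})
=\int_{0}^{t}\big(\phi_{n}^{-1}\big)'(x_{s})\,dx_{s}
=\int_{0}^{t}\si_{n}\big(\phi_{n}^{-1}(x_{s})\big)\,dx_{s}.
\]
Since $y^{n}_{s}=\phi_{n}^{-1}(x_{s})$ and $\phi_{n}^{-1}(x_{0})=0$, the right-hand side is $\int_{0}^{t}\si_{n}(y^{n}_{s})\,dx_{s}$ while the left-hand side is $y^{n}_{t}$, which is the claimed identity.

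The main obstacle is the justification of the Young It\^o formula in the last step: one must ensure that the integrand $(\phi_{n}^{-1})'\circ x=\si_{n}(y^{n})$ is H\"older of some order $\beta$ with $\beta+\ga>1$, so that both the Young integral and the chain rule hold. This is precisely where the truncation is indispensable: for the untruncated $\si\sim|\cdot|^{\ka}$ the composition $\si\circ\phi^{-1}$ would only be $\ka\ga$-H\"older near the origin, and the elementary chain rule could fail, whereas after truncation $\si_{n}$ is Lipschitz and $\si_{n}(y^{n})\in\cac^{\ga}$, restoring the condition $2\ga>1$. Everything else reduces to the inverse function theorem together with the Lipschitz bounds already established in Lemmas \ref{prop:invphis}--\ref{lem:nblip}.
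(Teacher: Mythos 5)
Your proof is correct and takes essentially the same route as the paper: both rest on the Lipschitz regularity of $\phi_{n}^{-1}$ and $\si_{n}$ (Lemmas \ref{lem:nblip}, \ref{lem:bar-sigma-m} and \ref{lem:psigma}) followed by an application of the Young change-of-variables formula of \cite[Theorem 4.3.1]{Za}, using $\ga>1/2$. The details you make explicit --- the inverse function theorem giving $(\phi_{n}^{-1})'=\si_{n}\circ\phi_{n}^{-1}$, the global Lipschitz bound for the truncated $\si_{n}$, and the normalization $y_{0}^{n}=0$ --- are left implicit in the paper's shorter argument.
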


\begin{proof}
We first  observe that that the function $\phi_{n}^{-1}$  is locally Lipschitz due to Lemma \ref{lem:nblip}.  The function $\si_n$ is also locally Lipschitz according to Lemma  \ref{lem:psigma}.
 Therefore,  
 $\sigma_{n}(\phi_{n}^{-1}(x_s))$ is  locally $\ga$-H\"older continuous. 
 Thus, invoking the usual change of variable in Young's integral (see e.g \cite[Theorem 4.3.1]{Za}) and recalling that $\ga>1/2$, we obtain
$$
y_t^{n}=\int_0^t\sigma_{n}(\phi_{n}^{-1}(x_s))dx_s=\int_0^t\sigma_{n}(y_{s}^{n})dx_s
,\quad t\ge0,
$$
and the proof is complete.
\end{proof}

We now turn to the main result of this section which states the convergence of $y^{n}$ to a solution to equation \eqref{eq:mequ}.

\begin{theorem}   \label{th1v1}
Assume that $\si$ satisfies Hypothesis \ref{hyp:nbsigma}.
Consider       $\eta$   such that $\frac{1-\ga(1+\ka)}{\ga}<\eta<1-\ka$.  
 Let $\phi$ be the function given by (\ref{eq:nphis}), and suppose that
  $x \in \cac^{\ga}([0,T])$ is such that $ |\phi^{-1} (x)|^{-\eta} 	\in	L^{1}([0, T])$
  and $x_0=0$.  
Then the function $y =  \phi^{-1}(x)$	 is a solution of	the equation
$$ y_{t} = \int_{0}^{t} \si( y_{s}) dx_{s}, 
\quad t \ge 0,
$$
where the integral $\int_{0}^{t} \si(y_{s}) dx_{s}$ is understood as in Proposition \ref{prop:integral}.
\end{theorem}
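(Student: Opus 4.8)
The plan is to pass to the limit in the approximating equation established in Proposition~\ref{prop:n3}, namely $y_t^n = \int_0^t \si_n(y_s^n)\,dx_s$ with $y_t^n = \phi_n^{-1}(x_t)$, and identify the limit as the integral equation for $y = \phi^{-1}(x)$ in the sense of Proposition~\ref{prop:integral}. First I would record the pointwise convergence $y_t^n \to y_t$ for every $t$, which follows from the convergence $\phi_n^{-1} \to \phi^{-1}$ underlying Lemma~\ref{lem:nblip}; I would also want to argue that $y = \phi^{-1}(x) \in \cac^\ga([0,T])$, using that $\phi^{-1}$ is locally Lipschitz (its derivative is $\si(\phi^{-1}(\cdot))$, which is bounded on compacts) composed with the $\ga$-H\"older path $x$. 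With the standing hypothesis $|\phi^{-1}(x)|^{-\eta} = |y|^{-\eta} \in L^1([0,T])$, this places $y$ in the space $\cac^\ga_\eta([0,T];\R)$ of~\eqref{eq:def-C-gamma-eta}, so that the right-hand side $\int_0^t \si(y_s)\,dx_s$ is a well-defined object by Proposition~\ref{prop:integral}(i).

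The core of the argument is then to show $\int_0^t \si_n(y_s^n)\,dx_s \to \int_0^t \si(y_s)\,dx_s$. I would write both integrals via the fractional-derivative formula~\eqref{eq:def-intg-frac} and estimate the difference using the bound~\eqref{eq:dfracx} on $D^{1-\al}_{t-}x^{t-}$, reducing the problem to the $L^1$-convergence of $D^\al_{0+}\si_n(y^n) \to D^\al_{0+}\si(y)$ against the fixed kernel. The decomposition mirrors the one in the proof of Proposition~\ref{prop:integral}: one controls the ``diagonal'' piece involving $\|\si_n(y^n) - \si(y)\|_\infty$ and the ``off-diagonal'' piece involving the increments $\si_n(y_s^n) - \si_n(y_u^n)$ compared to $\si(y_s) - \si(y_u)$. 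The key analytic inputs are that $\si_n \to \si$ uniformly on compacts with $\cn_{\ka,\si_n} \le \cn_{\ka,\si}$ uniformly (Lemma~\ref{lem:bar-sigma-m}), and the increment estimate of Lemma~\ref{lem:psigma}, which gives a uniform-in-$n$ bound
$$
|\si_n(y_s^n) - \si_n(y_u^n)| \lesssim \cn_{\ka,\si}\lp |y_s^n|^{-\eta} + |y_u^n|^{-\eta}\rp |y_s^n - y_u^n|^{\ka+\eta}.
$$
The strategy would be to establish a uniform integrable majorant for the integrand, obtained exactly as in~\eqref{eq:inty}, and then invoke dominated convergence with the pointwise limit coming from $y^n \to y$ and $\si_n \to \si$.

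The main obstacle will be the integrability of $|y^n|^{-\eta}$ uniformly in $n$, since the majorant requires control of $\int_0^T |y_s^n|^{-\eta}\,ds$ independent of $n$. Here I would exploit the inequality $|\phi^{-1}(\xi)| \le |\phi_n^{-1}(\xi)|$ from Lemma~\ref{prop:invphis}, which gives $|y_s^n| = |\phi_n^{-1}(x_s)| \ge |\phi^{-1}(x_s)| = |y_s|$, hence $|y_s^n|^{-\eta} \le |y_s|^{-\eta}$ pointwise. This is precisely the right direction: it furnishes the dominating function $|y|^{-\eta} \in L^1([0,T])$ uniformly in $n$, so that $y^n \in \cac^\ga_\eta$ with a uniform bound on the relevant integral. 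With this domination in place, the singular factors $|y_s^n|^{-\eta}$ are controlled by the fixed integrable function $|y_s|^{-\eta}$, the H\"older seminorms $\|y^n\|_\ga$ are bounded uniformly (by the uniform local Lipschitz constant of Lemma~\ref{lem:nblip} times $\|x\|_\ga$), and the dominated convergence theorem applies. Passing to the limit on both sides of $y_t^n = \int_0^t \si_n(y_s^n)\,dx_s$ then yields $y_t = \int_0^t \si(y_s)\,dx_s$, completing the proof.
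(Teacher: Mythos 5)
Your proposal is correct and follows essentially the same route as the paper's proof: approximation via Proposition \ref{prop:n3}, pointwise convergence $y^n\to y$, reduction to the vanishing of $\int_0^t[\si_n(y^n_s)-\si(y_s)]\,dx_s$ written in the fractional-derivative form \eqref{eq:def-intg-frac}, and dominated convergence. In particular, you identify exactly the paper's key ingredients: the domination $|y^n_s|^{-\eta}\le|y_s|^{-\eta}$ from Lemma \ref{prop:invphis}, the uniform bound $\cn_{\ka,\si_n}\le\cn_{\ka,\si}$ with Lemma \ref{lem:psigma} for the increment term, and the uniform Lipschitz estimates of Lemma \ref{lem:nblip}.
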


\begin{remark}
Note that $y\equiv0$ is also a solution of equation (\ref{eq:mequ}). So, in general, this equation may have several solutions.
\end{remark}

\begin{proof}[Proof of Theorem \ref{th1v1}]
Let $y^{n}$ be as in Proposition \ref{prop:n3}. 
For each $\xi\in \mathbb{R}$ we have  $\phi_n^{-1}(\xi) \rightarrow  \phi ^{-1}(\xi) $ as $n$ tends to infinity. Hence,   $y^{n}$ converges point-wise to $y$ as $n$ tends to infinity. Therefore, thanks to Proposition \ref{prop:n3}, we are reduced to show that for all $t\ge 0$
$$
I(t):= \lim_{n\rightarrow\infty}\int_0^t \lc \sigma_{n}(y_{s}^{n})- \sigma(y_s) \rc \, dx_s = 0.
$$
Otherwise stated, according to Proposition  \ref{prop:integral}, we have to check that, for $t\ge 0$,
\begin{equation}\label{eq:mira11}
\lim_{n\rightarrow\infty}
\int_0^t
\lc D_{0^+}^\alpha\left(\sigma(y)-\sigma_{n}(y^{n})\right)\rc_{s}
  D_{t^-}^{1-\alpha}x^{t^-}_s  \, ds
= 0,
\end{equation}
where $\alpha$  is such that $1-\ga<\alpha< \ga (\ka +\eta)$.
In order to prove  relation \eqref{eq:mira11}, we first invoke   definition \eqref{eq:def-Da-plus} and relation \eqref{eq:dfracx}. For $s\in[0,T]$, this gives
\begin{equation}\label{eq:dfracdif}
\left|\lc D_{0^+}^\alpha\left(\sigma(y)-\sigma_{n}(y^{n})\right)\rc_{s}
  D_{t^-}^{1-\alpha}x^{t^-}_s \right|
\lesssim  \|x\|_\ga \left(
I_{1,n}(s)+\int_0^sI_{2,n}(s,r)dr\right),
\end{equation}
where
\[
I_{1,n}(s)=
\frac{\left|\sigma(\phi^{-1}(x_s))-\sigma_{n}(\phi^{-1}_{n}(x_s))\right|}
{s^{\alpha}} \
\]
and
\[
I_{2,n}(s,r)=
\frac{\left|\sigma(\phi^{-1}(x_s))-\sigma_{n}(\phi^{-1}_{n}(x_s))
-\left(\sigma(\phi^{-1}(x_r))-\sigma_{n}(\phi^{-1}_{n}(x_r))\right)\right|}
{(s-r)^{1+\alpha}}.
\]
Going back to our aim \eqref{eq:mira11}, we are reduced to prove that
\begin{equation}\label{eq:lim-I1n-I2n}
\lim_{n\to\infty} \iot I_{1,n}(s) \, ds = 0,
\quad\text{and}\quad
\lim_{n\to\infty} \iot \int_{0}^{s} I_{2,n}(s,r) \, dr ds = 0.
\end{equation}
Moreover, thanks to the very definition of $\sigma_n$, we have  that for all $0\le r < s \le t$, 
$I_{1,n}(s)\rightarrow 0$ and $I_{2,n}(s,r)\rightarrow 0$, as    $n\rightarrow \infty$. Our claim \eqref{eq:lim-I1n-I2n} is thus ensured if we can bound $I_{1,n}(s)$ and $I_{2,n}(s,r)$ properly.

Let us start with a bound on the term $I_{1,n}(s)$.  
As in the proof of Lemma  \ref{lem:nblip} we can show that  $I_{1,n}(s)$ is bounded by a constant times $s^{-\alpha}$ for all $n \in \N$. This is enough to apply the dominated convergence theorem.

In order to bound the term $I_{2,n}$, we apply Lemmas \ref{lem:psigma},  \ref{prop:invphis}
 and \ref{lem:nblip}, and the fact that 
 $\sigma_{n}$ satisfies (\ref{eq:dsigma}) with 
 $\cn_{\ka,\si_n} \le \cn_{\ka,\sigma}$ (see Lemma \ref{lem:bar-sigma-m})	 to establish
\begin{eqnarray*}
I_{2,n}(s,r) &\le& (s-r)^{-\alpha -1}\left(| \sigma( \phi^{-1}(x_{s})) - \sigma(\phi^{-1}(x_{r}))|	+ |\sigma_{n}(\phi^{-1}_{n}(x_{s}))  -  \sigma_{n}(\phi_{n}^{-1}(x_{r}))|\right) \\
&\lesssim& (s-r)^{\ga(\ka+\eta) -\alpha -1} 
\left(|\phi^{-1}(x_{s})|^{-\eta}  +  |\phi^{-1}(x_{r})|^{-\eta}	 +  |\phi_{n}^{-1}(x_{s})|^{-\eta}  + |\phi_{n}^{-1}(x_{r})|^{-\eta}\right)				\\		
&\le& (s-r)^{\ga(\ka+\eta) -\alpha -1} 
\left(|\phi^{-1}(x_{s})|^{-\eta}  +  |\phi^{-1}(x_{r})|^{-\eta}
\right).
\end{eqnarray*}
We can thus conclude by the dominated convergence theorem, thanks to the fact that $\ga(\ka+\eta) -\alpha>0$. We get the second claim in \eqref{eq:lim-I1n-I2n}, which completes the proof of our theorem.
\end{proof}

\begin{remark}
A small variant of our calculations also allows to construct a solution to the initial value problem
\begin{equation}\label{eq:sde-with-a}
y_{t} = a + \int_{0}^{t} \si( y_{s}) dx_{s}, 
\quad t \ge 0,
\end{equation}
for a general $a\in\R$. Indeed, along the same lines as for Theorem \ref{th1v1}, one can prove that
 $y_t=\phi^{-1}(x_t+ \phi(a))$ is a solution of \eqref{eq:sde-with-a}
 if $|\phi^{-1}(x_t+ \phi(a))|^{-\eta}\in L^1([0,T])$.
\end{remark}

\section{Multidimensional differential equations}\label{sec:increment-control}

We now turn to the multidimensional setting of equation \eqref{eq:sde-power}. As mentioned in the introduction, our considerations will rely on regularity gain estimates for the solution  when it approaches 0, similarly to \cite{MP,MPS}. Before we deal with these regularity estimates, we will first introduce some new notation.

\subsection{Setting}

In the remainder of the article, we assume that each component $\si^j$, $j=1,\dots, d$  in the coefficients of equation  \eqref{eq:sde-power}, satisfies Hypothesis \ref{hyp:sigma}. As in the previous section, we need an additional hypothesis that says that $\si^j$ behaves as a power function.

\begin{hypothesis}\label{hyp4.1}
We suppose that for each $j=1,\dots,d$,  $\si^j :\mathbb{R}^m\rightarrow  \mathbb{R}^m$ satisfies Hypothesis~\ref{hyp:sigma}, and moreover:
\begin{itemize}
\item[(i)] For  any $\xi \in \R^m$ we have $|\si^j (\xi)| \gtrsim    |\xi|^\kappa$.
\item[(ii)]  $\sigma^j$ is differentiable with $\nabla \si^j$ locally H\"older continuous of order larger than $\frac 1\gamma-1$ in the set
$\{|\xi| \not =0\}$.
\end{itemize}
\end{hypothesis}

Fix $a\in \R^m$, $a\not =0$, and 
we consider equation 
\begin{equation}\label{eq:sde-power-approx}
y_{t} = a  + \sum_{j=1}^{d}\int_{0}^{t} \si^{j}(y_{u}) \, dx_{u}^{j}, \quad t \in [0,T].
\end{equation}
Using an approximation of $\si^j$ similar to  Lemma \ref{lem:bar-sigma-m} and applying known results on existence and uniqueness of solutions to equations driven by H\"older continuous functions  (see e.g \cite{FV-bk}), it is easy to show the following result.

\begin{proposition}  \label{th1} Suppose that Hypothesis \ref{hyp4.1} (ii) holds, and let $T$ be a given strictly positive time horizon. Then, there exists a continuous function $y$ defined on $[0,T]$ and an instant $\tau \le T$, such that one of the following two possibilities holds:  

\begin{itemize}
\item[(A)] $\tau =T$, $y$ is nonzero on $[0,T]$,  $y\in\cac^{\ga}([0,T];\R^m)$   and $y$ solves equation  \eqref{eq:sde-power-approx} on $[0,T]$, where the integrals $\int \si^{j}(y_{u}) \, dx_{u}^{j}$ are understood in the usual Young sense.

\item[(B)]   We have $\tau<T$. Then for any $t<\tau$, the path $y$ sits in $\cac^{\ga}([0,t];\R^m)$   and $y$ solves equation  \eqref{eq:sde-power-approx} on $[0,t]$.
Furthermore,  $y_s \not=0$ on   $[0,\tau)$, $\lim_{t\rightarrow \tau} y_t=0$ and $y_t=0$ on the interval $[\tau,T]$.
\end{itemize}
\end{proposition}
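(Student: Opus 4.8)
The plan is to construct the function $y$ and the stopping time $\tau$ via a truncation–and–limit procedure, closely paralleling the one-dimensional argument of Section 3 but now relying on classical Young/rough-path existence away from the origin. First I would introduce the truncated coefficients $\si^j_n$ (defined as in Lemma~\ref{lem:bar-sigma-m}, extended componentwise to the multidimensional setting), which agree with $\si^j$ off the ball $\{|\xi|\le 2^{-n}\}$ and are frozen at $\si^j(2^{-n})$ inside it. By Hypothesis~\ref{hyp4.1}(ii), each $\nabla\si^j$ is locally H\"older of order $>\frac1\ga-1$ away from the origin, so the truncated $\si^j_n$ are globally Lipschitz with $\nabla\si^j_n$ globally H\"older of the required order. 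Standard results for equations driven by $\ga$-H\"older signals with $\ga>1/2$ (for instance \cite{FV-bk,Za}) then yield, for each $n$, a unique global $\ga$-H\"older solution $y^n$ to the truncated equation
\begin{equation*}
y^n_t = a + \sum_{j=1}^{d}\int_0^t \si^j_n(y^n_u)\,dx^j_u,\quad t\in[0,T].
\end{equation*}

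Next I would define, for each $n$, the exit time
\begin{equation*}
\tau_n=\inf\{t\in[0,T]:\ |y^n_t|\le 2^{-n}\}\wedge T,
\end{equation*}
and observe that on $[0,\tau_n]$ the truncated coefficients coincide with the genuine ones, so $y^n$ solves \eqref{eq:sde-power-approx} verbatim there. The thresholds $\tau_n$ are (up to the truncation level) consistent as $n$ grows, so I would pass to a monotone limit $\tau=\lim_n \tau_n$ (arguing that on a common interval $[0,t]$ with $t$ strictly below the relevant threshold, the solutions $y^n$ stabilize by uniqueness of the non-truncated equation). The key input making the limit procedure tractable is the a priori H\"older control announced in item \textbf{(3)} of the introduction: whenever $|y^n_u|\le 2^{-k}$ on an interval $I$, the regularity gain gives $|\der y^n_{st}|\lesssim 2^{-\ka k}\|x\|_\ga|t-s|^\ga$ for $s,t\in I$, using Hypothesis~\ref{hyp4.1}(i) to lower-bound $|\si^j|$ and hence the genuine power behaviour. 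This uniform gain yields equi-$\ga$-H\"older bounds for the family $\{y^n\}$ on $[0,\tau]$, so by Arzel\`a–Ascoli a subsequence converges uniformly to a limit $y\in\cac^\ga([0,T];\R^m)$.

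I would then split into the two alternatives according to whether $y$ ever reaches the origin. If $|y_t|$ stays bounded away from $0$ on all of $[0,T]$, then for $n$ large the truncation is never active along $y^n$, the $y^n$ coincide with the true solution, and one is in case (A): $\tau=T$, $y\in\cac^\ga$, and $y$ solves \eqref{eq:sde-power-approx} in the ordinary Young sense. Otherwise set $\tau=\inf\{t:\ y_t=0\}$; for any $t<\tau$ the solution is bounded away from $0$ on $[0,t]$, so (as in the first case) $y$ solves \eqref{eq:sde-power-approx} there in the Young sense and lies in $\cac^\ga([0,t];\R^m)$. The regularity gain again controls the behaviour as $t\uparrow\tau$, forcing $\lim_{t\to\tau}y_t=0$; and since $\si^j(0)=0$ by Hypothesis~\ref{hyp:sigma}, the constant extension $y_t\equiv 0$ on $[\tau,T]$ solves the equation on $[\tau,T]$ and is continuous at $\tau$, giving case (B).

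The main obstacle I anticipate is justifying the passage to the limit in the integral terms $\int_0^t\si^j_n(y^n_u)\,dx^j_u$ on $[0,\tau]$, particularly near times where $y$ approaches $0$: one must show that the Young integrals converge to $\int_0^t\si^j(y_u)\,dx^j_u$ and that this limiting integral is well defined. This is exactly where the integrability of $|y|^{-1}$ and the machinery of Proposition~\ref{prop:integral} and Theorem~\ref{thm:lim-riemann-sums} are decisive: the equi-H\"older estimate plus Hypothesis~\ref{hyp4.1}(i) must be leveraged to produce the required $L^q$-control of $|y^n|^{-1}$ uniformly in $n$, so that the extended Young integral is continuous along the approximating sequence. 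Controlling this negative-power integrability as $y$ nears the origin, and ensuring it does not blow up before time $\tau$, is the delicate quantitative heart of the argument; everything else is a fairly routine compactness-and-stability scheme.
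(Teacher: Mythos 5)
Your overall skeleton --- truncate the coefficients as in Lemma \ref{lem:bar-sigma-m}, solve the truncated equations by classical Young theory, introduce the exit times $\tau_n$ from $\{|\xi|>2^{-n}\}$, patch the solutions together, and extend by zero after $\tau$ --- is exactly the paper's (one-sentence) argument for this proposition. The genuine problem is the analytic engine you bolt onto it. The ``regularity gain'' of item \textbf{(3)} of the introduction is Proposition \ref{prop:regularity-gain}, and it requires Hypothesis \ref{hyp4.1}\emph{(i)} \emph{and} Hypothesis \ref{hyp:reg-x-gamma-gamma1} on the noise; Proposition \ref{th1} assumes only Hypothesis \ref{hyp4.1}\emph{(ii)}, so neither the claimed equi-$\ga$-H\"older bound for $\{y^n\}$ up to $\tau$ nor the Arzel\`a--Ascoli extraction of a limit in $\cac^{\ga}([0,T];\R^m)$ is available here. (It is also doubly unjustified: past $\tau_n$ the path $y^n$ solves the \emph{truncated} equation, whose coefficient does not vanish at the origin, so no gain estimate applies to it at all.) Moreover, this step proves more than the statement asserts: case (B) only claims $\ga$-H\"older regularity and the equation on $[0,t]$ for $t<\tau$; H\"older continuity of $y$ across $\tau$ is precisely the content of the later Proposition \ref{th2}, and solvability on all of $[0,T]$ that of Theorem \ref{thm:exist-multidim}, both of which need the stronger hypotheses. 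The stabilization remark you make only in passing is the actual proof: for $n'\ge n$ one has $y^{n'}=y^n$ on $[0,\tau_n]$ by uniqueness of the \emph{truncated} equations (not of the original equation \eqref{eq:sde-power-approx}, which has no uniqueness --- witness the zero solution), so $y$ is defined on $[0,\tau)$ with no compactness whatsoever, and on each $[0,t]$ with $t<\tau$ it coincides with some $y^n$ and $\inf_{[0,t]}|y|>0$, giving (B) with ordinary Young integrals. For the same reason your closing paragraph is misdirected: no passage to the limit in the integrals, no extended integral (Proposition \ref{prop:integral}, Theorem \ref{thm:lim-riemann-sums}), and no $L^{q}$-control of $|y|^{-1}$ enters this proposition; that machinery serves Proposition \ref{prop:riemann-sums} and Theorem \ref{thm:exist-multidim}.

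Two further points. First, the raw truncation of Lemma \ref{lem:bar-sigma-m} is Lipschitz but its gradient jumps across the sphere $\{|\xi|=2^{-n}\}$, so your claim that $\nabla\si^{j}_{n}$ is globally H\"older ``of the required order'' is false as written; one must smooth the truncation near that sphere (this is the latitude in the paper's phrase ``similar to Lemma \ref{lem:bar-sigma-m}'') before classical results give the uniqueness on which the whole patching rests. Second, the one claim of (B) that genuinely requires an argument under the weak hypotheses is $\lim_{t\to\tau}y_t=0$, i.e.\ ruling out oscillation of $|y|$ between $0$ and some $\ep>0$ as $t\uparrow\tau$; your justification via the regularity gain is again unavailable, but a cheap substitute works: whenever $y$ crosses the annulus $\{\ep/4\le|\xi|\le\ep/2\}$ it solves a (smoothed) truncated equation with bounded, differentiable coefficients having H\"older gradient, whose a priori H\"older estimate forces each such crossing to last at least a fixed positive time, so only finitely many crossings fit before $\tau\le T$.
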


Notice that our option (A) above leads to classical solutions of equation \eqref{eq:sde-power-approx}. 
In the rest of this section, we will assume (B), that is the function $y$ given by Proposition \ref{th1}  vanishes  in the interval $[\tau, T]$. Our aim is thus to prove the following two facts:
\begin{itemize}
\item 
The path $y$ is globally $\ga$-H\"older continuous on $[0,T]$.

\item
The integrals $\int \si^{j}(y_{u}) \, dx_{u}^{j}$ can be understood as limits of Riemann sums, and $y$ solves equation \eqref{eq:sde-power-approx} on $[0,T]$.
\end{itemize}
Notice that in order to achieve this aim, we will need some additional hypotheses  on $x$. We shall also assume $\ga+\ka>1$, which is a natural condition in our context (as explained in the introduction).

In order to quantify the regularity gain of solutions close to the origin, we split the interval $[0,\tau)$ as follows. We first define $a_{q}=2^{-q}$ and we introduce a decomposition of the  space $\R_{+}$, which is the state space for $|y|$, into the following sets:
\[
I_{-1}=\left[1,\infty\right), \quad {\rm and} \quad  I_{q}=[a_{q+1}, a_{q}),  \quad q\ge 0.
\]
We also need  to define the intervals:
\[
J_{-1}= \left[3/4, \infty\right), \quad  {\rm and} \quad
J_{q} = \lc \frac{a_{q+2}+a_{q+1}}{2}, \frac{a_{q+1}+a_{q}}{2}  \rp
=: \lc \hat{a}_{q+1}, \hat{a}_{q} \rp, \quad q\ge 0.
\]
Notice that $\hat{a}_{q}=\frac{3}{2^{q+2}}$. We now construct a partition of $[0,\tau)$ as follows. Assume that $|a|\in I_{q_{0}}$, and set $\la_{0}=0$ and 
\begin{equation*}
\tau_{0} = \inf\lcl t\ge 0 :\, |y_{t}|\not\in I_{q_{0}} \rcl.
\end{equation*}
In this case $y_{\tau_{0}}\in J_{{\hat q}_{0}}$ with ${\hat q}_0\in\{q_0,q_0-1\}$.
We then set:
\begin{equation*}
\la_{1} = \inf\lcl t\ge \tau_{0} :\, |y_{t}|\not\in J_{{\hat q}_{0}}\rcl.
\end{equation*}
In this way we recursively construct a sequence of stopping times $\la_{0}<\tau_{0}<\cdots<\la_{k}<\tau_{k}$ such that
\begin{equation}\label{eq:def-sigma-tau-k}
|y_{t}| \in \lc \frac{b_1}{2^{q_{k}}}, \, \frac{b_2}{2^{q_{k}}} \rc,
\quad\text{for}\quad
t\in [\la_{k},\tau_{k}] \cup [\tau_{k},\la_{k+1}],
\end{equation}
where $b_1=\frac 38$, $b_2=\frac 34$
and $q_{k+1}=q_{k}+\ell$, with $\ell\in\{-1,0,1\}$, assuming that $q_k\ge 1$. Notice that if $q_k =0$ or $q_k=1$, then the upper bound $b_2$ may be infinity. This construction is depicted in Figure \ref{fig:stop-times}.

\begin{figure}[htbp]
\begin{center}
\caption{An example of path with stopping times.}
\label{fig:stop-times}

\begin{tikzpicture}[xscale=0.3,yscale=0.3]

\draw [->,  thick] (-1,4) --(28,4);
\draw [->,  thick] (3,2) --(3,20);

\draw [fill,black] (6,4) circle [radius=.2];
\node [below,scale=.9, blue] at (6,4) {$\la_{k}$}; 

\draw [fill,black] (10,4) circle [radius=.2];
\node [below,scale=.9, red] at (10,4) {$\tau_{k}$}; 

\draw [fill,black] (18,4) circle [radius=.2];
\node [below,scale=.9, blue] at (18,4) {$\la_{k+1}$};

\draw [fill,black] (24,4) circle [radius=.2];
\node [below,scale=.9, red] at (24,4) {$\tau_{k+1}$}; 


\draw [fill,black] (3,6) circle [radius=.2];
\node [left,scale=.9, red] at (3,6) {$a_{q+1}$}; 

\draw [fill,black] (3,10) circle [radius=.2];
\node [left,scale=.9, blue] at (3,10) {$\hat{a}_{q}$}; 

\draw [fill,black] (3,14) circle [radius=.2];
\node [left,scale=.9, red] at (3,14) {$a_{q}$}; 

\draw [fill,black] (3,18) circle [radius=.2];
\node [left,scale=.9, blue] at (3,18) {$\hat{a}_{q-1}$}; 

\draw [<->,  thick,red, dashed] (-1,6) --(-1,14);
\node [right,scale=.9, red] at (-1,10) {$I_{q}$}; 

\draw [<->,  thick,blue, dashed] (-3,10) --(-3,18);
\node [left,scale=.9, blue] at (-3,14) {$J_{q-1}$}; 

\draw [black, thick] plot [smooth] 
coordinates 
{(5,8.8) (5.5, 9.1) 
(6,10) (6.5,12) (8,7) (10,14)
 (11,15) (12,13) (13,12) (14,14) (16, 13) (18,18)
 (20,19) (22,16) (24,14)
 (24.5, 13.5) (25, 13.2)};

\draw [dashed,blue] (6,10) --(6,4);
\draw [dashed,blue] (6,10) --(3,10);

\draw [dashed,red] (10,14) --(10,4);
\draw [dashed,red] (10,14) --(3,14);

\draw [dashed,blue] (18,18) --(18,4);
\draw [dashed,blue] (18,18) --(3,18);

\draw [dashed,red] (24,14) --(24,4);
\draw [dashed,red] (24,14) --(3,14);

\end{tikzpicture}

\end{center}
\end{figure}

Finally, let us justify a simplification in notations which will prevail until the end of this Section.
\begin{remark}
Notice that, owing to our Hypothesis \ref{hyp:sigma}, our problem relies heavily on radial variables in $\R^{m}$. Therefore, in order to alleviate vectorial notations, we will carry out the computations below for $m=d=1$. This allows us in particular to drop the exponents $j$ in our formulae. The reader will easily generalize our considerations to higher dimensions.
\end{remark}

\subsection{Regularity estimates}\label{sec:reg-estimates}
Let us start with a decomposition lemma for the solution to the regularized equation~\eqref{eq:sde-power-approx}. We recall a convention which will prevail until the end of the paper: for a function $f$ defined on $[0,T]$, we set $\delta f_{st}=f_{t}-f_{s}$.

\begin{lemma}
Let $0\le s < t < \tau$.
For $l\ge 0$ we consider the dyadic partition $\Pi_{st}^{l}$ of $[s,t]$ defined by $t_{i}^{l}=s+2^{-l}i(t-s)$ for $l\ge 0$ and $i=0,\ldots,2^{l}$. 
Then one can write:
\begin{equation}\label{eq:dyadic-dcp-integral}
\delta y_{st}
=
\si(y_{s}) \, \der x_{st} + \sum_{l=1}^{\infty} K_{st}^{l},
\end{equation}
where
\begin{equation*}
K_{st}^{l} = \sum_{i=0}^{2^{l}-1} 
\lc \der\si(y) \rc_{t_{2i}^{l+1}t_{2i+1}^{l+1}}
\der x _{t_{2i+1}^{l+1}t_{2i+2}^{l+1}}.
\end{equation*}
\end{lemma}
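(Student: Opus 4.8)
The plan is to prove the dyadic decomposition \eqref{eq:dyadic-dcp-integral} purely as an algebraic telescoping identity, together with a convergence argument showing the series $\sum_{l\ge 1}K_{st}^{l}$ is summable. First I would introduce the Riemann-type sum along the $l$-th dyadic partition,
\[
S_{st}^{l} := \sum_{i=0}^{2^{l}-1} \si(y_{t_{i}^{l}}) \, \der x_{t_{i}^{l}t_{i+1}^{l}},
\]
so that $S_{st}^{0}=\si(y_{s})\,\der x_{st}$, the first term on the right-hand side of \eqref{eq:dyadic-dcp-integral}. Since $y$ solves equation \eqref{eq:sde-power-approx} on $[0,t]$ for $t<\tau$ in the usual Young sense, and the Young integral is the limit of its Riemann sums (see \cite[Corollary 2.3]{Za} / \cite[Theorem 4.3.1]{Za}, already invoked above), I would observe that $S_{st}^{l}\to \delta y_{st}=\int_{s}^{t}\si(y_{u})\,dx_{u}$ as $l\to\infty$. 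The whole statement will therefore follow from the identity $\delta y_{st}=S_{st}^{0}+\sum_{l=1}^{\infty}K_{st}^{l}$ once I identify each increment $S_{st}^{l+1}-S_{st}^{l}$ with $K_{st}^{l+1}$.

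The key step is this telescoping computation. Passing from the partition at level $l$ to level $l+1$ refines each subinterval $[t_{i}^{l},t_{i+1}^{l}]=[t_{2i}^{l+1},t_{2i+2}^{l+1}]$ into two halves. On such an interval the level-$l$ sum contributes $\si(y_{t_{2i}^{l+1}})\,\der x_{t_{2i}^{l+1}t_{2i+2}^{l+1}}$, while the level-$(l+1)$ sum contributes $\si(y_{t_{2i}^{l+1}})\,\der x_{t_{2i}^{l+1}t_{2i+1}^{l+1}}+\si(y_{t_{2i+1}^{l+1}})\,\der x_{t_{2i+1}^{l+1}t_{2i+2}^{l+1}}$. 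Using the additivity of increments $\der x_{t_{2i}^{l+1}t_{2i+2}^{l+1}}=\der x_{t_{2i}^{l+1}t_{2i+1}^{l+1}}+\der x_{t_{2i+1}^{l+1}t_{2i+2}^{l+1}}$, the difference over this interval is exactly
\[
\lp \si(y_{t_{2i+1}^{l+1}})-\si(y_{t_{2i}^{l+1}}) \rp \der x_{t_{2i+1}^{l+1}t_{2i+2}^{l+1}}
=
\lc \der\si(y) \rc_{t_{2i}^{l+1}t_{2i+1}^{l+1}}\der x_{t_{2i+1}^{l+1}t_{2i+2}^{l+1}}.
\]
Summing over $i$ gives precisely $S_{st}^{l+1}-S_{st}^{l}=K_{st}^{l+1}$, so that $S_{st}^{L}=S_{st}^{0}+\sum_{l=1}^{L}K_{st}^{l}$ for every $L\ge 1$.

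To conclude, I would let $L\to\infty$. The left-hand side converges to $\delta y_{st}$ by the Riemann-sum characterization of the Young integral recalled above, which yields \eqref{eq:dyadic-dcp-integral} provided the series $\sum_{l\ge 1}K_{st}^{l}$ converges; but this convergence is automatic as the difference of two convergent quantities, so no independent summability estimate is strictly needed at this stage. I expect the main (and essentially the only nontrivial) obstacle to be justifying the convergence $S_{st}^{l}\to\delta y_{st}$ cleanly on $[s,t]\subset[0,\tau)$: one must check that $\si(y)$ is $\ga$-H\"older on this interval so that classical Young theory applies, which holds since $y$ stays away from $0$ on any compact subinterval of $[0,\tau)$ (hence $\si$ is Lipschitz there by Hypothesis \ref{hyp4.1}) and $y\in\cac^{\ga}([0,t];\R)$ by Proposition \ref{th1}(B). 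Everything else is the elementary telescoping identity above, so the lemma is really just a bookkeeping reorganization of the defining Riemann sums of the Young integral.
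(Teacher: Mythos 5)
Your proposal follows the paper's own proof essentially verbatim: the paper also introduces the dyadic Riemann sums (there denoted $J_{st}^{l}$, your $S_{st}^{l}$), invokes the convergence of Young Riemann sums on $[s,t]\subset[0,\tau)$, and telescopes, identifying the difference of consecutive sums with the $K$-terms; your extra remark that $y$ stays bounded away from $0$ on $[s,t]$, so that $\si(y)$ is $\ga$-H\"older there and classical Young theory applies, is a correct justification of a step the paper leaves implicit. The one point to fix is an index slip. With $K_{st}^{l}$ defined as in the statement (using the points $t_{\cdot}^{l+1}$ of the level-$(l+1)$ partition, with $2^{l}$ summands), the per-interval difference you compute, summed over $i=0,\ldots,2^{l}-1$, is by definition $K_{st}^{l}$, \emph{not} $K_{st}^{l+1}$. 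Hence the telescoping reads $S_{st}^{L}=S_{st}^{0}+\sum_{l=0}^{L-1}K_{st}^{l}$ and, in the limit, $\delta y_{st}=\si(y_{s})\,\der x_{st}+\sum_{l=0}^{\infty}K_{st}^{l}$, which is exactly what the paper's proof derives ($J_{st}^{l+1}-J_{st}^{l}=K_{st}^{l}$, sum starting at $l=0$). In particular the term $K_{st}^{0}=\lc\der\si(y)\rc_{s,(s+t)/2}\,\der x_{(s+t)/2,\,t}$, the correction from the one-point sum $\si(y_{s})\der x_{st}$ to the two-point sum, cannot be dropped; the lower limit $l=1$ in the displayed statement is itself an off-by-one typo, and your relabeling $S^{l+1}-S^{l}=K^{l+1}$ reproduces that typo while contradicting the stated definition of $K^{l}$. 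Modulo this bookkeeping correction, your proof is correct and identical in substance to the paper's.
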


\begin{proof}
Since $s,t\in[0,\tau)$, the integral $\int_{s}^{t} \si(y_{u}) \, dx_{u} $ is a usual Young integral, which is thus limit of Riemann sums along dyadic partitions. Let us write $J_{st}^{l}$ for those Riemann sums, and notice that
\begin{eqnarray}
J_{st}^{l} &=& 
\sum_{i=0}^{2^{l}-1} \si\big( y_{t_{i}^{l}}\big) \, \der x_{t_{i}^{l}t_{i+1}^{l}}  \label{eq:exp1-Jl}\\
&=&
\sum_{i=0}^{2^{l}-1} 
\si\big( y_{t_{2i}^{l+1}}\big) 
\lc \der x_{t_{2i}^{l+1} t_{2i+1}^{l+1}} + \der x_{t_{2i+1}^{l+1} t_{2i+2}^{l+1}}\rc. \label{eq:exp2-Jl}
\end{eqnarray}
Then, we know from usual Young integration that $J_{st}^{l}$ converges, as $l\to\infty$, to $\int_{s}^{t} \si(y_{u}) \, dx_{u}$. Therefore, we can write
\begin{equation*}
\int_{s}^{t} \si(y_{u}) \, dx_{u}
=
\si(y_{s}) \, \der x_{st}  + \sum_{l=0}^{\infty} \lp J_{st}^{l+1} - J_{st}^{l} \rp .
\end{equation*}
Resorting to expression \eqref{eq:exp1-Jl} for $J_{st}^{l+1}$ and to expression \eqref{eq:exp2-Jl} for $J_{st}^{l}$ above, some elementary algebraic manipulations reveal that $J_{st}^{l+1} - J_{st}^{l}=K_{st}^{l}$, which ends the proof.
\end{proof}

Let us state an additional (harmless) hypothesis on our noise $x$, which will be crucial in order to get sharp regularity estimates.
\begin{hypothesis}\label{hyp:reg-x-gamma-gamma1}
There exists $\ep_{1}>0$ such that for $\ga_{1}=\ga+\ep_{1}$, we have  $\|x\|_{\ga_{1}}<\infty$ and $\ga_1+\ga\ka<1$. 
\end{hypothesis}

We are now ready to give the basis of the strategy alluded to above, based on a regularity gain when $y$ is close to 0.

\begin{proposition}\label{prop:regularity-gain}
Assume $\si$ satisfies Hypothesis \ref{hyp4.1} and $x$ is such that \ref{hyp:reg-x-gamma-gamma1} is fulfilled.  Then the following bounds hold true:

\noindent
\emph{(i)}
There exist  constants  $c_{0,x}$  and $c_{1,x}$ such that for $s,t\in [\la_{k}, \la_{k+1})$ satisfying
\begin{equation}\label{eq:cdt-small-scale}
|t-s|\le  c_{0,x} \, 2^{-\al q_{k}},
\quad\text{with}\quad
\al := \frac{1-\ka}{\ga},
\end{equation}
we have the following  bound:
\begin{equation}\label{eq:a-priori-bnd-yn}
\lln  \der y_{st} \rrn \le
 c_{1,x} \, 2^{-q_{k}\ka} |t-s|^{\ga}   .
\end{equation}

\noindent
\emph{(ii)}
With Hypothesis \ref{hyp:reg-x-gamma-gamma1} in mind,  we get a refined decomposition for $\der y_{st}$. Namely, if $s,t$ are two instants in $[\la_{k}, \la_{k+1})$ such that \eqref{eq:cdt-small-scale} holds true, we have the following relation for $\der y_{st}$:
\begin{equation}\label{eq:refined-dcp-yn}
\der y_{st}
=
\si(y_{s}) \, \der x_{st} + r_{st},
\quad\text{with}\quad
|r_{st}| \le   c_{2,x} \, 2^{-\ka_{\ep_{1}} q_{k}} |t-s|^{\ga},
\end{equation}
where we have set $\ka_{\ep_{1}}=\ka+\ep_{1}\al$.
\end{proposition}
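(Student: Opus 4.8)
My plan rests on the dyadic decomposition \eqref{eq:dyadic-dcp-integral}, which already isolates the claimed leading term $\si(y_s)\,\der x_{st}$ and leaves the series $\sum_{l\ge1}K^l_{st}$ to be estimated --- crudely for (i), sharply for (ii). The constant input throughout is that, by the very construction \eqref{eq:def-sigma-tau-k}, one has $|y_u|\in[b_1 2^{-q_k},b_2 2^{-q_k}]$, hence $|y_u|\asymp 2^{-q_k}$, at every point $u\in[\la_k,\la_{k+1}]$ and in particular at all the dyadic nodes. Combining this magnitude control with Hypothesis \ref{hyp:sigma} gives the two elementary facts I will use repeatedly: $|\si(y_u)|\lesssim|y_u|^\ka\lesssim 2^{-q_k\ka}$, and, since $z\mapsto z^{\ka-1}$ is decreasing,
\[
|\si(y_v)-\si(y_u)|\lesssim\big||y_v|^\ka-|y_u|^\ka\big|\lesssim 2^{q_k(1-\ka)}\,|\der y_{uv}|,\qquad u,v\in[\la_k,\la_{k+1}].
\]
The gain $2^{q_k(1-\ka)}$ appearing here, played against the loss carried by $\der y$, is precisely what generates the threshold scale $2^{-\al q_k}$.

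For part (i) I would run a H\"older-seminorm bootstrap. Fix $[s,t]\subseteq[\la_k,\la_{k+1})$ satisfying \eqref{eq:cdt-small-scale} and set $\mathcal G=\sup_{s\le u<v\le t}|\der y_{uv}|/|v-u|^\ga$, which is finite since $[s,t]\subset[0,\tau)$ and $y\in\cac^\ga([0,t'])$ for all $t'<\tau$ by Proposition \ref{th1}. I apply \eqref{eq:dyadic-dcp-integral} on an arbitrary $[u,v]\subseteq[s,t]$, bounding each increment of $x$ by $\|x\|_\ga(2^{-(l+1)}|v-u|)^\ga$ and each increment of $\si(y)$ by $2^{q_k(1-\ka)}\mathcal G\,(2^{-(l+1)}|v-u|)^\ga$; the level-$l$ term then carries a combinatorial factor $2^l$ and two factors $2^{-(l+1)\ga}$, so the sum over $l$ converges exactly because $\ga>1/2$ and gives $\sum_l|K^l_{uv}|\lesssim 2^{q_k(1-\ka)}\mathcal G\,\|x\|_\ga|v-u|^{2\ga}$. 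Dividing by $|v-u|^\ga$, taking the supremum, and using $|v-u|\le|t-s|$ yields
\[
\mathcal G\le C\,2^{-q_k\ka}\|x\|_\ga+C\,2^{q_k(1-\ka)}\|x\|_\ga|t-s|^\ga\,\mathcal G .
\]
Because $\al=(1-\ka)/\ga$, the scale condition \eqref{eq:cdt-small-scale} makes $2^{q_k(1-\ka)}|t-s|^\ga\le c_{0,x}^\ga$, the powers of $2^{-q_k}$ cancelling, so choosing $c_{0,x}$ with $C\|x\|_\ga c_{0,x}^\ga\le 1/2$ absorbs the last term and leaves $\mathcal G\le c_{1,x}2^{-q_k\ka}$, i.e. \eqref{eq:a-priori-bnd-yn}.

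For part (ii) I take $r_{st}=\sum_{l\ge1}K^l_{st}$ and re-estimate it, now feeding the a priori bound \eqref{eq:a-priori-bnd-yn} into the $\si(y)$-increments and measuring the $x$-increments in the finer norm of Hypothesis \ref{hyp:reg-x-gamma-gamma1}: each $\si(y)$-increment is at most $2^{q_k(1-\ka)}c_{1,x}2^{-q_k\ka}(2^{-(l+1)}|t-s|)^\ga=c_{1,x}2^{q_k(1-2\ka)}(2^{-(l+1)}|t-s|)^\ga$, and each $x$-increment at most $\|x\|_{\ga_1}(2^{-(l+1)}|t-s|)^{\ga_1}$. The level-$l$ term now carries $2^l\,2^{-(l+1)(\ga+\ga_1)}$, summable since $\ga+\ga_1>1$, leaving $|r_{st}|\lesssim 2^{q_k(1-2\ka)}|t-s|^{\ga+\ga_1}$. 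I then split $|t-s|^{\ga+\ga_1}=|t-s|^\ga|t-s|^{\ga_1}$ and convert the spare factor via \eqref{eq:cdt-small-scale}, $|t-s|^{\ga_1}\lesssim 2^{-\al\ga_1 q_k}$, so that the exponent of $2^{-q_k}$ becomes
\[
-(1-2\ka)+\al\ga_1=-(1-2\ka)+\frac{(1-\ka)(\ga+\ep_1)}{\ga}=\ka+\ep_1\al=\ka_{\ep_1},
\]
which is exactly the claimed rate $|r_{st}|\le c_{2,x}2^{-\ka_{\ep_1}q_k}|t-s|^\ga$.

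The geometric summations over the level $l$ are routine; the delicate points are the two power-matchings. In (i) the threshold in \eqref{eq:cdt-small-scale} is not free but is forced by requiring the self-referential term to be absorbable, which is where $\al=(1-\ka)/\ga$ gets pinned down, and the one structural subtlety is checking $\mathcal G<\infty$ so that absorption is legitimate. In (ii) the displayed identity --- hinging on the precise definitions $\al=(1-\ka)/\ga$ and $\ga_1=\ga+\ep_1$ --- is what turns the surplus H\"older regularity $\ep_1$ of $x$ into the improved exponent $\ka_{\ep_1}$; I expect this bookkeeping of the $2^{-q_k}$ powers to be the main thing to get right.
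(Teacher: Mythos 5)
Your proposal is correct and is essentially the paper's own proof: the same dyadic decomposition \eqref{eq:dyadic-dcp-integral}, the same Lipschitz-on-the-annulus estimate (the paper gets it from Lemma \ref{lem:psigma} with the choice $\eta=1-\ka$, producing the factor $2^{q_k(1-\ka)}$), the same absorption/bootstrap of a localized H\"older seminorm for (i), and the same re-estimation of $\sum_l K^l_{st}$ in the $\ga_1$-norm with the scale condition converting the surplus $|t-s|^{\ga_1}$ into $2^{-\al\ga_1 q_k}$ for (ii). The only differences are cosmetic (your $\mathcal{G}$ on $[s,t]$ versus the paper's $\|y\|_{\ga,k,\nu}$ on $[\la_k,\la_{k+1})$, and writing $\ga+\ga_1$ for the paper's $2\ga+\ep_1$), and your explicit check that the seminorm is finite before absorbing is a point the paper leaves implicit.
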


\begin{proof}
For $k\ge 1$ and $\nu>0$ we set
\begin{equation*}
\|y\|_{\ga,k,\nu}
=\sup\lcl
\frac{|\der y_{uv}|}{|v-u|^{\ga}} : \,
u,v\in [\la_{k}, \la_{k+1}), \, |v-u|\le \frac{c_{0}}{2^{\nu}}
\rcl,
\end{equation*}
where the constants $c_{0}$ and $\nu$ will be tuned on later.

\noindent
\emph{Step 1: Proof of \eqref{eq:a-priori-bnd-yn}.}
Pick $s,t\in [\la_{k}, \la_{k+1})$ such that $|s-t|\le c_{0} 2^{-\nu}$.
Recall that we consider the dyadic partitions of $[s,t]$, with $t_{i}^{l}=s+2^{-l}i(t-s)$ for $l\ge1$ and $i=0,\ldots,2^{l}$.
Start from decomposition~\eqref{eq:dyadic-dcp-integral}.
Then, since  both $|y_{s}|$ and $|y_{t}|$ lie into $[b_1 2^{-q_k}, b_2 2^{-q_k}]$ and $\si$ verifies Hypothesis 2.1, we obviously have
\begin{equation}\label{eq:bnd-main-increment}
\lln \si(y_{s}) \, \der x_{st} \rrn
\le
 c_1\| x\|_\gamma |t-s|^{\ga}2^{-q_k\ka},
\end{equation}
where $c_1 = \cn_{\ka,\si}  b_2^\ka   $.

In the remainder of this proof, we denote $t^{l+1}_{2i},t^{l+1}_{2i+1}$ by
$t_{2i},t_{2i+1}$, respectively, to simplify the notation.
We now bound the quantity $[\der\si(y)]_{t_{2i}t_{2i+1}}  \der x_{t_{2i+1}t_{2i+2}}$ popping up in~\eqref{eq:dyadic-dcp-integral}. Thanks to Lem\-ma~\ref{lem:psigma}, for any $\eta\le 1-\ka$ we have
\begin{equation*}
\lln [\der\si(y)]_{t_{2i}t_{2i+1}} \rrn
\le \cn_{\ka,\si}  
\lp |y_{t_{2i}}|^{-\eta} +|y_{t_{2i+1}}|^{-\eta} \rp \lln  \der y_{t_{2i}t_{2i+1}} \rrn^{\ka+\eta}.
\end{equation*}
Thus, since $y_{t_{2i}},y_{t_{2i+1}}\in [b_1 2^{-q_k}, b_2 2^{-q_k}]$ we get
\begin{equation}\label{eq:bnd-remainder-1}
\lln [ \der\si(y)]_{t_{2i}t_{2i+1}} \rrn \lln \der x_{t_{2i+1}t_{2i+2}}  \rrn
\le \cn_{\ka,\si} 2b_1^{-\eta}
\|x\|_{\ga} \|y\|_{\ga,k,\nu}^{\ka+\eta}  \, 2^{q_k\eta} \lln  \frac{t-s}{2^{l}} \rrn^{(1+\ka+\eta)\ga}.
\end{equation}
We choose $\eta$ above such that $\ga(1+\ka+\eta)=2\ga$. It is readily checked that such a $\eta$ verifies
\begin{equation*}
\eta= 1-\ka.
\end{equation*}
Furthermore, with this value of $\eta$ in hand, relation \eqref{eq:bnd-remainder-1} becomes
\begin{equation}\label{eq:pdct-increm-sigma-y-x}
\lln [\der\si(y)]_{t_{2i}t_{2i+1}} \rrn \lln \der x_{t_{2i+1}t_{2i+2}}  \rrn
\le \cn_{\ka,\si} 2b_1^{\ka-1}
\|x\|_{\ga} \|y\|_{\ga,k,\nu}  \, 2^{q_k(1-\ka)} \lln  \frac{t-s}{2^{l}} \rrn^{2\ga}.
\end{equation}
Plugging this inequality into the terms $K_{st}^{l}$ of \eqref{eq:dyadic-dcp-integral} we end up with
\begin{equation}\label{eq:bnd-remainder-2}
\sum_{l=1}^{\infty} | K_{st}^{l} |
\le
c_{3,x} \|y\|_{\ga,k,\nu} \, 2^{q_k (1-\ka)} \, |t-s|^{2\ga},
\end{equation}
where we have set $c_{3,x}=  \frac {  \cn_{\ka,\si} 2b_1^{\ka-1}}{2^{2 \ga }-1} \|x\|_{\ga}$.
Reporting \eqref{eq:bnd-main-increment} and \eqref{eq:bnd-remainder-2} into \eqref{eq:dyadic-dcp-integral}, this yields
\begin{equation}\label{eq:estim-increments-y-small}
|\der y_{st}|
\le
 c_1\| x\|_\gamma   |t-s|^{\ga}2^{-q_k\ka} + A_{st}^{2},
 \quad\text{with}\quad
 A_{st}^{2}=
 c_{3,x}    \|y\|_{\ga,k,\nu} \, 2^{q_k (1-\ka)} \, |t-s|^{2\ga}.
\end{equation}
We should now bound the term $A_{st}^{2}$ as a $\ga$-H\"older increment. Indeed, recalling that we assume $|t-s|\le  c_{0} 2^{-\nu}$, we get
\begin{equation}\label{eq:upp-bnd-A2}
A_{st}^{2} \le c_{3,x} c_{0}^{\ga} \, 2^{q_k (1-\ka)- \nu\ga}   \|y\|_{\ga,k,\nu} |t-s|^{\ga}.
\end{equation}
We now choose $c_{0}$ and $\nu$ so that $c_{3,x} c_{0}^{\ga} \, 2^{q_k (1-\ka)- \nu\ga}\le \frac12$. It is readily checked that this is achieved for $c_{0}$ small enough and $\nu=\al q_k:=\ga^{-1}(1-\ka)q_k$ given by \eqref{eq:cdt-small-scale}. With those values of $c_{0}$ and $\nu$ in hand, relation \eqref{eq:estim-increments-y-small} becomes
\begin{equation*}
\|y\|_{\ga,k,\nu} \le 
c_1\| x\|_\gamma  2^{-q_k\ka}  +  \frac12 \|y\|_{\ga,k,\nu} ,
\end{equation*}
from which \eqref{eq:a-priori-bnd-yn} is easily deduced, with $c_{1,x}= 2c_1 \|x\|_\ga$.

\noindent
\emph{Step 2: Proof of \eqref{eq:refined-dcp-yn}.}
Go back to relation \eqref{eq:pdct-increm-sigma-y-x} and invoke Hypothesis \ref{hyp:reg-x-gamma-gamma1} in order to get
\begin{equation*}
\lln [\der\si(y)]_{t_{2i}t_{2i+1}} \rrn \lln \der x_{t_{2i+1}t_{2i+2}}  \rrn
\le \cn_{\ka,\si} 2b_1^{\ka-1}
\|x\|_{\ga_{1}} \|y\|_{\ga,k,\nu}  \, 2^{q_k(1-\ka)} \lln  \frac{t-s}{2^{l}} \rrn^{2\ga+\ep_{1}}.
\end{equation*}
Moreover, according to~\eqref{eq:dyadic-dcp-integral}, the term $r_{st}$ in \eqref{eq:refined-dcp-yn} is given by $\sum_{l=1}^{\infty}  K_{st}^{l}$. Proceeding as for relations \eqref{eq:bnd-remainder-2} and \eqref{eq:estim-increments-y-small}, we obtain that 
\begin{equation}\label{eq:bnd-remainder-3}
\lln r_{st}\rrn \le \sum_{l=1}^{\infty} \lln K_{st}^{l} \rrn \le A_{st}^{2}=
\tilde{ c}_{3,x}    \|y\|_{\ga,k,\nu} \, 2^{q _k(1-\ka)} \, |t-s|^{2\ga+\ep_{1}},
\end{equation}
where
$\tilde{ c}_{3,x} = \frac {  \cn_{\ka,\si} 2b_1^{\ka-1}}{2^{2 \ga+\ep_1 }-1} \|x\|_{\ga_1}$.

We now plug the a priori bound~\eqref{eq:a-priori-bnd-yn} on $\|y\|_{\ga,k,\nu}$ we have just obtained, and read the regularity of $A^{2}$ in $\ga$-H\"older norm. Similarly to \eqref{eq:upp-bnd-A2}, we can recast \eqref{eq:bnd-remainder-3} as:
\[
A_{st}^{2} \le  \tilde{c}_{3,x}   c_0^{\ga +\ep_1} 
 2^{q_k (1-\ka)-\nu(\ga+\ep_{1})} \, c_{1,x} \, 2^{-q_{k}\kappa}  |t-s|^{\ga}.
 \]
 Let us recall that $\nu =\alpha q_k$. Therefore we obtain:
\begin{equation*}
A_{st}^{2} \le  
\tilde{ c}_{3,x}c_0^{\ga+\varepsilon_1}c_{1,x}
 2^{-q_k (\ka+\al \ep_{1})\ga} |t-s|^{\ga}.
\end{equation*}
Taking into account the fact that $\ka_{\ep_{1}}=\ka+\al \ep_{1}$, this finishes the proof of \eqref{eq:refined-dcp-yn}.
\end{proof}

In the sequel we shall need some regularity estimates for $y$ on time scales slightly larger than $2^{-\al q_{k}}$ with $\al= \ga^{-1}(1-\ka)$. This is the contents of the following property.

\begin{corollary}\label{cor:a-priori-bnd-yn-larger-intv}
Under the same hypotheses as in Proposition \ref{prop:regularity-gain},   consider $\ep_{2}>0$ such that 
\[
\ep_2 < \min \left(  \gamma^{-1}(1-\ka), \ka (1-\ga)^{-1}, \frac {\ka + \ga^{-1}(1-\ka)\ep_1}{1+\ep_1} \right).
\]
 Then there exists a  constant  $c_{4,x}=2^{1-\ga}c_{0,x}$  such that for $s,t\in [\la_{k}, \la_{k+1})$ satisfying $|t-s|\le  c_{4,x} 2^{-(\al-\ep_{2}) q_{k}} $ with $\al= \ga^{-1}(1-\ka)$ we have
\begin{equation}\label{eq:a-priori-bnd-yn-larger-intv}
\lln  \der y_{st} \rrn \le
 c_{5,x} 2^{-q_{k}\ka_{\ep_{2}}^{-}} 
 |t-s|^{\ga}   ,
\quad\text{with}\quad
\ka_{\ep_{2}}^{-} = \ka- (1-\ga)\ep_{2}.
\end{equation}
 Moreover, under the same conditions on $s,t$, decomposition \eqref{eq:refined-dcp-yn} still holds true, with
\begin{equation}\label{eq:refined-dcp-yn-larger-intv}
|r_{st}| \le c_{6,x}   2^{-q_{k} \ka_{\ep_{1},\ep_{2}}}  |t-s|^{\ga},
\quad\text{where}\quad
\ka_{\ep_{1},\ep_{2}} = \ka+\al\ep_{1}-\ep_{2} - \ep_{1}\ep_{2}.
\end{equation}

\end{corollary}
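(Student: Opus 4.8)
The plan is to derive both inequalities from the local estimates of Proposition \ref{prop:regularity-gain}, which hold on intervals of length $\le c_{0,x}2^{-\al q_{k}}$, by scaling them up to intervals of length $\le c_{4,x}2^{-(\al-\ep_{2})q_{k}}$, i.e. longer by a factor of order $2^{\ep_{2}q_{k}}$. Throughout, the key structural fact is that $|y|$ stays in $[b_{1}2^{-q_{k}},b_{2}2^{-q_{k}}]$ on $[\la_{k},\la_{k+1}]$ by \eqref{eq:def-sigma-tau-k}, so that Lemma \ref{lem:psigma} and \eqref{eq:a-priori-bnd-yn} may be applied on every subinterval of $[\la_{k},\la_{k+1})$.

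First I would prove \eqref{eq:a-priori-bnd-yn-larger-intv} by a plain subdivision. Given $s,t\in[\la_{k},\la_{k+1})$ with $h:=|t-s|\le c_{4,x}2^{-(\al-\ep_{2})q_{k}}$, I choose the least integer $N$ with $h/N\le c_{0,x}2^{-\al q_{k}}$; since $c_{4,x}=2^{1-\ga}c_{0,x}$ one readily checks $N\lesssim 2^{\ep_{2}q_{k}}$. Partitioning $[s,t]$ into $N$ equal subintervals, each of length $\le c_{0,x}2^{-\al q_{k}}$ and contained in $[\la_{k},\la_{k+1})$, the sharp bound \eqref{eq:a-priori-bnd-yn} applies on every one of them. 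Summing via the triangle inequality produces the superadditive factor $N^{1-\ga}\lesssim 2^{(1-\ga)\ep_{2}q_{k}}$, so that $\lln\der y_{st}\rrn\lesssim 2^{-q_{k}\ka}\,N^{1-\ga}\,h^{\ga}\lesssim 2^{-q_{k}(\ka-(1-\ga)\ep_{2})}h^{\ga}$, which is precisely \eqref{eq:a-priori-bnd-yn-larger-intv}; the constraint $\ep_{2}<\ka(1-\ga)^{-1}$ guarantees $\ka_{\ep_{2}}^{-}>0$.

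For the refined bound \eqref{eq:refined-dcp-yn-larger-intv} I would \emph{not} telescope the decomposition \eqref{eq:refined-dcp-yn}, since the main terms $\si(y_{s})\der x$ do not add up cleanly. Instead I apply the dyadic decomposition \eqref{eq:dyadic-dcp-integral} directly on the whole interval $[s,t]$, which yields $\der y_{st}=\si(y_{s})\der x_{st}+r_{st}$ with $r_{st}=\sum_{l\ge1}K_{st}^{l}$. The subincrements at level $l$ have length $2^{-(l+1)}h$, and I would split the series at the level $l^{*}$ where $2^{-(l^{*}+1)}h\approx c_{0,x}2^{-\al q_{k}}$, so that $2^{l^{*}}\approx 2^{\ep_{2}q_{k}}$. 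On the fine levels $l\ge l^{*}$ the subincrements are shorter than $c_{0,x}2^{-\al q_{k}}$ and I bound $\lln\der y_{t_{2i}t_{2i+1}}\rrn$ by the sharp estimate \eqref{eq:a-priori-bnd-yn} (exponent $\ka$); on the coarse levels $l<l^{*}$ I use the estimate \eqref{eq:a-priori-bnd-yn-larger-intv} just obtained (exponent $\ka_{\ep_{2}}^{-}$). In both regimes I control $\lln[\der\si(y)]_{t_{2i}t_{2i+1}}\rrn$ through Lemma \ref{lem:psigma} with $\eta=1-\ka$ (so $\ka+\eta=1$ and $|y|^{-\eta}\lesssim 2^{q_{k}(1-\ka)}$ on the band), and I measure $\der x$ in $\ga_{1}$-norm through Hypothesis \ref{hyp:reg-x-gamma-gamma1} in order to gain the extra $\ep_{1}$.

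The remaining work is exponent bookkeeping, and this is the delicate point. Since $\ga_{1}+\ga=2\ga+\ep_{1}>1$, each of the two geometric series in $l$ converges, the coarse one being dominated by its term at $l=1$ and the fine one by its term at $l=l^{*}$. Converting the leftover power $h^{\ga_{1}}$ into a power of $2^{-q_{k}}$ via $h\le c_{4,x}2^{-(\al-\ep_{2})q_{k}}$ and using $\al\ga=1-\ka$, I expect the coarse contribution to produce exactly the exponent $\ka_{\ep_{1},\ep_{2}}=\ka+\al\ep_{1}-\ep_{2}-\ep_{1}\ep_{2}$, while the fine contribution produces the strictly larger exponent $\ka+\al\ep_{1}-(1-\ga)\ep_{2}$ and is therefore dominated. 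This matching is where the precise value of $\ka_{\ep_{1},\ep_{2}}$ is forced, and it is also where the constraint $\ep_{2}<(\ka+\al\ep_{1})(1+\ep_{1})^{-1}$ is needed, namely to keep $\ka_{\ep_{1},\ep_{2}}>0$ so that \eqref{eq:refined-dcp-yn-larger-intv} is a genuine regularity gain. Collecting the coarse and fine estimates then gives $|r_{st}|\lesssim 2^{-q_{k}\ka_{\ep_{1},\ep_{2}}}h^{\ga}$, which completes the proof.
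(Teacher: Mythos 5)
Your proof is correct and follows essentially the same route as the paper: for \eqref{eq:a-priori-bnd-yn-larger-intv} the paper likewise subdivides $[s,t]$ into $m=[2^{\ep_{2}q_{k}}]+1$ pieces of length at most $c_{0,x}2^{-\al q_{k}}$, applies \eqref{eq:a-priori-bnd-yn} on each and absorbs the loss $m^{1-\ga}\lesssim 2^{(1-\ga)\ep_{2}q_{k}}$, and for \eqref{eq:refined-dcp-yn-larger-intv} it likewise re-estimates the dyadic series $r_{st}=\sum_{l\ge1}K_{st}^{l}$ on the whole interval using the $\ga_{1}$-norm of $x$ and the newly obtained H\"older bound. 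The only deviation is your coarse/fine split at $l^{*}$, which is an unnecessary (though harmless) refinement: the paper simply inserts the uniform bound $c_{5,x}2^{-q_{k}\ka_{\ep_{2}}^{-}}$ in place of $\|y\|_{\ga,k,\nu}$ in \eqref{eq:bnd-remainder-3} at every level (legitimate since every dyadic subinterval of $[s,t]$ has length at most $c_{4,x}2^{-(\al-\ep_{2})q_{k}}$), sums the geometric series using $2\ga+\ep_{1}>1$, and converts $|t-s|^{\ga+\ep_{1}}\le (c_{4,x}2^{-(\al-\ep_{2})q_{k}})^{\ga+\ep_{1}}$ -- which is exactly your coarse contribution, and as your own bookkeeping shows this is the dominant one, producing the exponent $\ka_{\ep_{1},\ep_{2}}$.
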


\begin{proof}
We take up the notation introduced for the proof of Proposition \ref{prop:regularity-gain}, and we split again our computations in 2 steps.

\noindent
\emph{Step 1: Proof of \eqref{eq:a-priori-bnd-yn-larger-intv}.}
Start from inequality \eqref{eq:a-priori-bnd-yn}, which is valid for $|t-s|\le  c_{0,x} 2^{-\al q_{k}}$. Now let $m\in\N$ and consider $s,t\in [\la_{k}, \la_{k+1})$ such that $c_{0,x}  (m-1) 2^{-\al q_{k}} <|t-s|\le  c_{0,x}  m 2^{-\al q_{k}}$. We partition the interval $[s,t]$ by setting $t_{j}=s+c_{0,x} j 2^{-\al q_{k}}$ for $j=0,\ldots,m-1$ and $t_{m}=t$. Then we simply write
\begin{equation*}
|\der y_{st}|
\le
\sum_{j=0}^{m-1} |\der y_{t_{j}t_{j+1}}|
\le
c_{1,x}  2^{-q_{k}\ka} \sum_{j=0}^{m-1} \lp t_{j+1} - t_{j} \rp^{\ga}
\le
c_{1,x} 2^{-q_{k}\ka} m^{1-\ga} |t-s|^{\ga},
\end{equation*}
where the last inequality stems from  the fact that $t_{j+1} - t_{j}\l \le (t-s)/m$. Now  the upper bound \eqref{eq:a-priori-bnd-yn-larger-intv} is easily deduced by applying the above inequality to $m=[2^{\ep_{2} q_k}]+1$.

\noindent
\emph{Step 2: Proof of \eqref{eq:refined-dcp-yn-larger-intv}.} Once \eqref{eq:a-priori-bnd-yn-larger-intv} is proven, we go again through the estimation of $K_{st}^{l}$. Replacing $\|y\|_{\ga,k,\nu}$ by $c_{5,x} 2^{-q_{k}\ka_{\ep_{2}}^{-}}$ in \eqref{eq:bnd-remainder-3}, we end up with
\begin{equation*}
|r_{st}| 
\le
c_{6,x} \, 2^{-q_k \ka_{\ep_{2}}^{-}} 2^{q_k (1-\ka)} 2^{-q_k (\al
-\varepsilon_2)(\ga+\ep_{1})} |t-s|^{\ga}
= c_{6,x} \,  2^{-q_k \ka_{\ep_{1},\ep_{2}}}  |t-s|^{\ga},
\end{equation*}
which is our claim \eqref{eq:refined-dcp-yn-larger-intv}.

\end{proof}

\subsection{Estimates for stopping times}

Thanks to the regularity estimates of the previous section, we get a bound on the difference $\la_{k+1}-\la_{k}$ which roughly states that a solution to  equation  \eqref{eq:sde-power-approx},   cannot go too sharply to 0.
\begin{proposition}\label{prop:upper-bound-diff-sigma-k}
The sequence of stopping times $\{\la_{k},\, k\ge 1\}$ defined by \eqref{eq:def-sigma-tau-k} satisfies
\begin{equation}\label{eq:low-bnd-increment-sigma-k}
\la_{k+1}-\la_{k} \ge  c_{7,x}  \, 2^{-\al q_{k}},
\end{equation}
where we recall that $\al=(1-\ka)/\ga$.
\end{proposition}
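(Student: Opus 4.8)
The plan is to play two competing estimates on the increment $\der y$ over the first phase $[\la_{k},\tau_{k}]$ of the $k$-th excursion against each other: a lower bound coming solely from the geometric definition of the stopping times, and the a priori upper bound \eqref{eq:a-priori-bnd-yn} furnished by Proposition \ref{prop:regularity-gain}. Both are naturally expressed through the common scale $2^{-q_{k}}$, and matching them will pin $\la_{k+1}-\la_{k}$ at the scale $2^{-\al q_{k}}$.

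First I would record a lower bound on the displacement $|\der y_{\la_{k}\tau_{k}}|$. By construction the path enters the interval $I_{q_{k}}$ at time $\la_{k}$ coming from the previous $J$-interval, so $|y_{\la_{k}}|$ is the unique $\hat a$-value sitting inside $I_{q_{k}}$, namely $\hat a_{q_{k}}=\tfrac34\,2^{-q_{k}}$; whereas $\tau_{k}$ is the first exit time from $I_{q_{k}}$, so $|y_{\tau_{k}}|$ is one of its endpoints $\tfrac12\,2^{-q_{k}}$ or $2^{-q_{k}}$. In either case the reverse triangle inequality yields
\[
|\der y_{\la_{k}\tau_{k}}|\ \ge\ \big||y_{\tau_{k}}|-|y_{\la_{k}}|\big|\ \ge\ \tfrac14\,2^{-q_{k}} .
\]
Only the two-sided bound $|y_{t}|\asymp 2^{-q_{k}}$ recorded in \eqref{eq:def-sigma-tau-k} and the fact that $\la_{k},\tau_{k}$ are hitting times of interval boundaries separated by a gap of order $2^{-q_{k}}$ are used here; the precise constant $1/4$ is irrelevant and one only needs $|\der y_{\la_{k}\tau_{k}}|\gtrsim 2^{-q_{k}}$.

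I would then run a dichotomy on $\tau_{k}-\la_{k}$, using throughout that $\tau_{k}-\la_{k}\le \la_{k+1}-\la_{k}$ since $\la_{k}<\tau_{k}<\la_{k+1}$. If $\tau_{k}-\la_{k}> c_{0,x}\,2^{-\al q_{k}}$, the claim \eqref{eq:low-bnd-increment-sigma-k} is immediate. Otherwise $\tau_{k}-\la_{k}\le c_{0,x}\,2^{-\al q_{k}}$, so the pair $s=\la_{k}$, $t=\tau_{k}\in[\la_{k},\la_{k+1})$ satisfies condition \eqref{eq:cdt-small-scale}, and Proposition \ref{prop:regularity-gain}(i) applies to give
\[
\tfrac14\,2^{-q_{k}}\ \le\ |\der y_{\la_{k}\tau_{k}}|\ \le\ c_{1,x}\,2^{-q_{k}\ka}\,(\tau_{k}-\la_{k})^{\ga} .
\]
Solving for the elapsed time yields $(\tau_{k}-\la_{k})^{\ga}\gtrsim 2^{-q_{k}(1-\ka)}$, whence $\tau_{k}-\la_{k}\gtrsim 2^{-q_{k}(1-\ka)/\ga}=2^{-\al q_{k}}$ because $\al=(1-\ka)/\ga$. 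Combining the two cases and invoking $\la_{k+1}-\la_{k}\ge \tau_{k}-\la_{k}$ establishes \eqref{eq:low-bnd-increment-sigma-k}, with $c_{7,x}=\min\big(c_{0,x},\,(4c_{1,x})^{-1/\ga}\big)$.

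The argument has no genuinely hard step: the a priori estimate \eqref{eq:a-priori-bnd-yn} is valid exactly up to the scale $2^{-\al q_{k}}$ that we wish to certify as a lower bound for the excursion length, so the dichotomy is self-consistent and the exponent $\al$ emerges automatically. The only point demanding care is the displacement bound, where one must correctly read off the entry value $|y_{\la_{k}}|$ and the exit value $|y_{\tau_{k}}|$ from the nested families $I_{q},J_{q}$ and verify that, whatever the direction of the excursion, they are separated by a fixed fraction of $2^{-q_{k}}$; this is pure bookkeeping with the geometry of the partition.
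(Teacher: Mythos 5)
Your proof is correct and takes essentially the same route as the paper: the exit from $I_{q_k}$ forces a displacement of order $2^{-q_k}$, and playing this against the a priori bound \eqref{eq:a-priori-bnd-yn} of Proposition \ref{prop:regularity-gain}(i) pins $\tau_k-\la_k$ (hence $\la_{k+1}-\la_k$) above $2^{-\al q_k}$. If anything your write-up is slightly tighter than the paper's: you make the dichotomy on $\tau_k-\la_k$ explicit, use the exact displacement $\tfrac14\,2^{-q_k}$ (the paper loosely says $2^{-(q_k+1)}$), and note that the bound on $\tau_k-\la_k$ alone already suffices via $\la_{k+1}-\la_k\ge\tau_k-\la_k$, none of which changes the substance.
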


\begin{proof}
 We shall prove that $\tau_{k}-\la_{k}$ satisfies a lower bound of the form
\begin{equation}\label{eq:low-bnd-increment-tau-k-sigma-k}
\tau_{k}-\la_{k} \ge  c_{7,x} \, 2^{-\al q_k}.
\end{equation}
Along the same lines we can prove a similar bound for $\la_{k+1}-\tau_{k}$, and this will prove our claim~\eqref{eq:low-bnd-increment-sigma-k}. 

Inequality \eqref{eq:low-bnd-increment-tau-k-sigma-k} is obtained in the following way. We observe that in order to get out of the interval $[\la_{k},\tau_{k})$, an increment of size $2^{-(q_k+1)}$ must occur.  Indeed, at $\lambda_k$ the solution is at the middle point of  $I_{q_k}$ and the length of this interval is of order $2^{-q_k}$.
However, relation~\eqref{eq:a-priori-bnd-yn} asserts that if $|\der y_{st}|\ge 2^{-(q_{k}+1)}$ and $|t-s|\le c_{0,x}  2^{-\al q_k} $,
 then we must have
\begin{equation}\label{eq:lower-bnd-t-s}
c_{1,x}  \frac{|t-s|^{\ga}}{2^{\ka q_k}} \ge \frac{1}{2^{q_k+1}},
\end{equation}
which implies  
\[
 |t-s | \ge  \left(2 c_{1,x} \right)^{-\frac 1\ga} 2^{-\frac{(1-\ka)q_k}{\ga}}
 = \left(2c_{1,x} \right)^{-\frac 1\ga} 2^{-\al q_k}
 .
 \]
This finishes our proof.
\end{proof}

In order to sharpen Proposition \ref{prop:upper-bound-diff-sigma-k}, we introduce a roughness hypothesis on $x$, borrowed from \cite{CHLT}. As we shall see, this assumption is satisfied when $x$ is a fractional Brownian motion.

\begin{hypothesis}\label{hyp:roughness}
We assume that for $\hep$ arbitrarily small there exists  
a constant $c>0$ such that for every $s$ in $\left[  0,T\right]  $, every
$\epsilon$ in $(0,T/2]$, and every $\phi$ in $\mathbb{R}^{d}$ with $\left\vert
\phi\right\vert =1$, there exists $t$ in $\left[  0,T\right]  $ such that
$\epsilon/2<\left\vert t-s\right\vert <\epsilon$ and
\[
\left\vert \left\langle \phi,\der x_{st}\right\rangle \right\vert >c \, \epsilon^{\ga+\hep}.
\]
The largest such constant is called the modulus of $(\ga+\hep)$-H\"{o}lder
roughness of $x$, and is denoted by $L_{\ga,\hep}\left(  x\right)$.
\end{hypothesis}

Under this hypothesis, we are also able to upper bound the difference $\la_{k+1}-\la_{k}$ in a useful way

\begin{proposition}\label{prop:bound-diff-sigma-k-2}
For all $\ep_{2}<\frac{\al\varepsilon_1}{1+\ga+\varepsilon_1}\wedge
\frac{\kappa}{1-\ga}$
 and $q_{k}$ large enough (that is for $k$ large enough, since $\lim_{k\to\infty}q_k= \infty$ under Assumption (B) of Proposition \ref{th1}), the sequence of stopping times $\{\la_{k},\, k\ge 1\}$ defined by \eqref{eq:def-sigma-tau-k} satisfies
\begin{equation}\label{eq:upp-bnd-increment-sigma-k}
\la_{k+1}-\la_{k}\le  c_{x,\ep_{2}}  2^{- q_{k} (\al-\ep_{2})},
\end{equation}
where we recall that $\al=(1-\ka)/\ga$. Furthermore, inequality \eqref{eq:a-priori-bnd-yn-larger-intv} can be extended as follows:  there exists a constant $c_{x}$ such that for $s,t\in [\la_{k}, \la_{k+1})$ we have
\begin{equation}\label{eq:a-priori-bnd-yn-whole-interval-lambda}
\lln  \der y_{st} \rrn \le
 c_{x} 2^{-\ka_{\ep_{2}}^{-}q_{k}}  |t-s|^{\ga}   .
\end{equation}
\end{proposition}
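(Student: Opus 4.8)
The plan is to establish the upper bound \eqref{eq:upp-bnd-increment-sigma-k} by contradiction, playing the roughness of $x$ (Hypothesis \ref{hyp:roughness}) against the refined decomposition \eqref{eq:refined-dcp-yn-larger-intv} of Corollary \ref{cor:a-priori-bnd-yn-larger-intv}. Working in the scalar setting $m=d=1$, recall that on $[\la_{k},\la_{k+1})$ the path $|y|$ remains in the band $[b_1 2^{-q_k}, b_2 2^{-q_k}]$, so any increment satisfies $|\der y_{uv}|\le (b_2-b_1)2^{-q_k}$. Suppose, contrary to \eqref{eq:upp-bnd-increment-sigma-k}, that $\la_{k+1}-\la_{k}> c_{x,\ep_2}\,2^{-(\al-\ep_2)q_k}$ with $c_{x,\ep_2}:=c_{4,x}$. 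Setting $\ep=c_{4,x}\,2^{-(\al-\ep_2)q_k}$ (which is $\le T/2$ for $q_k$ large), I apply Hypothesis \ref{hyp:roughness} at the base point $s=\la_k$ to produce $t$ with $\ep/2<t-\la_k<\ep$ and $|\der x_{\la_k t}|>L_{\ga,\hep}(x)\,\ep^{\ga+\hep}$, where $L_{\ga,\hep}(x)>0$. By construction $t-\la_k<\ep<\la_{k+1}-\la_k$, so $t\in[\la_k,\la_{k+1})$ and the Corollary applies to the pair $(\la_k,t)$.

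Next I bound $\der y_{\la_k t}$ from below through the decomposition $\der y_{\la_k t}=\si(y_{\la_k})\,\der x_{\la_k t}+r_{\la_k t}$. Using Hypothesis \ref{hyp4.1}(i) together with $|y_{\la_k}|\ge b_1 2^{-q_k}$, one has $|\si(y_{\la_k})|\gtrsim 2^{-\ka q_k}$, so the principal term is at least of order $2^{-\ka q_k}\ep^{\ga+\hep}$, while \eqref{eq:refined-dcp-yn-larger-intv} controls the remainder by $c_{6,x}\,2^{-\ka_{\ep_1,\ep_2}q_k}\ep^{\ga}$. The crux is to check two exponent inequalities, both of which hold once $\hep$ is taken small enough: first, $\ka_{\ep_1,\ep_2}-\ka=\al\ep_1-\ep_2-\ep_1\ep_2>(\al-\ep_2)\hep$, which makes the remainder negligible against the principal term for $q_k$ large, so that $|\der y_{\la_k t}|\gtrsim 2^{-\ka q_k}\ep^{\ga+\hep}$; and second, using the identity $\ka+\al\ga=1$ (so that $2^{-\ka q_k}\ep^{\ga+\hep}$ is of order $2^{-[1+\al\hep-\ep_2(\ga+\hep)]q_k}$), the inequality $\al\hep<\ep_2(\ga+\hep)$, which forces this lower bound to eventually exceed $(b_2-b_1)2^{-q_k}$. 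The positivity of the gap $\al\ep_1-\ep_2-\ep_1\ep_2$ is precisely where the assumption $\ep_2<\frac{\al\ep_1}{1+\ga+\ep_1}$ enters, yielding the definite margin $\al\ep_1\cdot\frac{\ga}{1+\ga+\ep_1}>0$, while $\ep_2\ga>0$ is automatic; hence both constraints on $\hep$ can be met simultaneously. The resulting inequality $|\der y_{\la_k t}|>(b_2-b_1)2^{-q_k}$ contradicts the band constraint, which proves \eqref{eq:upp-bnd-increment-sigma-k}.

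Finally, for \eqref{eq:a-priori-bnd-yn-whole-interval-lambda} I upgrade \eqref{eq:a-priori-bnd-yn-larger-intv} from the scale $c_{4,x}2^{-(\al-\ep_2)q_k}$ to the whole interval $[\la_k,\la_{k+1})$, whose length is now controlled by \eqref{eq:upp-bnd-increment-sigma-k}. Given $s,t\in[\la_k,\la_{k+1})$, since $|t-s|\le c_{x,\ep_2}2^{-(\al-\ep_2)q_k}$ I split $[s,t]$ into $M:=\lceil c_{x,\ep_2}/c_{4,x}\rceil$ equal pieces, each of length at most $c_{4,x}2^{-(\al-\ep_2)q_k}$, apply \eqref{eq:a-priori-bnd-yn-larger-intv} on each piece, and sum exactly as in Step~1 of Corollary \ref{cor:a-priori-bnd-yn-larger-intv}. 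This gives $|\der y_{st}|\le M^{1-\ga}c_{5,x}\,2^{-\ka_{\ep_2}^{-}q_k}|t-s|^{\ga}$, and since $M$ is an absolute constant we absorb $M^{1-\ga}c_{5,x}$ into $c_{x}$, obtaining \eqref{eq:a-priori-bnd-yn-whole-interval-lambda}.

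The main obstacle is the delicate exponent bookkeeping of the second paragraph: everything hinges on beating both the remainder $r_{st}$ and the band width $(b_2-b_1)2^{-q_k}$ with the single rough lower bound on $\si(y_{\la_k})\,\der x_{\la_k t}$, which is possible only because the roughness loss $\hep$ is an arbitrarily small free parameter and because the structural identity $\ka+\al\ga=1$ places $2^{-\ka q_k}|t-s|^{\ga}$ at exactly the band scale $2^{-q_k}$, up to the tunable corrections carried by $\ep_2$ and $\hep$.
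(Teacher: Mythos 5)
Your proof is correct and follows essentially the same route as the paper's: a contradiction argument that pits the roughness lower bound on $\si(y_{\la_k})\,\der x_{\la_k t}$ (Hypotheses \ref{hyp4.1}(i) and \ref{hyp:roughness}) against the refined remainder estimate \eqref{eq:refined-dcp-yn-larger-intv} of Corollary \ref{cor:a-priori-bnd-yn-larger-intv}, with the same exponent bookkeeping --- your condition $\al\hep<\ep_2(\ga+\hep)$ is precisely the paper's requirement $(1-\ka)/(\ga+\hep)\ge\al-\ep_2$, and the positive margin in $\al\ep_1-\ep_2(1+\ga+\ep_1)$ is where $\ep_2<\frac{\al\ep_1}{1+\ga+\ep_1}$ enters in both arguments. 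The only differences are in tuning: the paper normalizes $\ep$ so that the noise increment carries an arbitrarily large constant $c_1$ (framing the contradiction as ``for every $C$, infinitely many $k$''), whereas you fix $\ep$ at the Corollary's scale $c_{4,x}2^{-(\al-\ep_2)q_k}$ and beat the band width through the exponent gap for $q_k$ large, which yields a cleaner per-$k$ statement with an explicit constant; moreover you spell out the chaining argument for the second claim \eqref{eq:a-priori-bnd-yn-whole-interval-lambda}, which the paper's proof leaves implicit.
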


\begin{proof}
If  \eqref{eq:upp-bnd-increment-sigma-k} does not hold,  this implies that there exists $\ep_2<\frac{\al\varepsilon_1}{1+\ga+\varepsilon_1}\wedge
\frac{\kappa}{1-\ga}$ 
satisfying the condition of Corollary   \ref{cor:a-priori-bnd-yn-larger-intv} so that for any constant $C$ the inequality
\begin{equation}\label{a1}
\la_{k+1}-\la_{k}\ge  C  2^{- q_{k} (\al-\ep_{2})}
\end{equation}
holds for infinitely many values of $k$.
This implies that
\begin{equation}\label{a2}
\la_{k+1}-\la_{k} \ge  C \, 2^{-q_k(1-\ka)/(\ga+\hep)},
\end{equation}
if we  choose $\hep$ small enough so that  $(1-\ka)/(\ga+\hep)\ge \al-\ep_{2}$. We wish to exhibit a contradiction, namely that one can find $s,t\in[\la_{k},\la_{k+1}]$ such that $|\delta y_{st}| > |J_{q_k}|$, where $|J_{q_k}|$ denotes the size of $J_{q_k}$.

In order to lower bound $|\delta y_{st}|$, let us first invoke Hypothesis \ref{hyp:roughness}. 
Since our computations are performed in the one-dimensional case for notational sake, we can in fact recast Hypothesis~\ref{hyp:roughness} as follows. Choose
\[
\ep:=  \frac{c_{1} \, 2^{-\frac{q_k(1-\ka)}{\ga+\hep}}}{\lc L_{\ga,\hep}(x) \rc^{\frac{1}{\ga+\hep}}}  \le  C \, 2^{-\frac{q_k(1-\ka)}{\ga+\hep}},
\]
which can be achieved by taking the constant $C$ large enough, for a given constant $c_1$. Then there exist $s,t\in [\la_{k},\la_{k+1}]$ satisfying
\begin{equation}\label{eq:low-bnd-inc-x}
\frac \ep 2 \le |t-s| 
\le  \ep,
\quad\text{and}\quad
|\der x_{st}| \ge  \, c_{1}^{\ga+\hep} \, 2^{-q_k(1-\ka)}.
\end{equation}
Notice that $c_{1}$ can be made arbitrarily large, by playing with $k$ and $\hep$. In addition, we can use the fact that $|\si(y_{s})|\ge  c2^{-q_k\ka}$ whenever $s\in[\la_{k},\la_{k+1}]$ . Indeed, this follows from Hypothesis~\ref{hyp4.1} and the fact that $y_s \ge b_1 2^{-q_k} \ge 2^{ -q_k-2}$.
 This entails, for $s,t$ as in \eqref{eq:low-bnd-inc-x}
\begin{equation*}
|\si(y_{s}) \der x_{st}| \ge   c     c_{1}^{\ga+\hep} \, 2^{-q_k}.
\end{equation*}
If \eqref{a1} holds true, we can now choose $c_{1}$  so that $c c_{1}^{\ga+\hep}\ge 6$. This yields
\begin{equation*}
|\si(y_{s}) \der x_{st}| \ge   6\cdot  2^{-q_k} = 2 |J_{q_k}|.
\end{equation*}
In particular the size of this increment is larger than twice the size of $J_{q_k}$.

We now assume again that we have chosen $\hep$ small enough so that  $(1-\ka)/(\ga+\hep)\ge \al-\ep_{2}$. Then the upper bound on $|t-s|$ in \eqref{eq:low-bnd-inc-x}  also implies $|t-s|\le c_{8,x}  2^{-q_k (\al-\ep_{2})}$.
For the two instants $s,t$ exhibited in relation \eqref{eq:low-bnd-inc-x}, we resort to decomposition~\eqref{eq:dyadic-dcp-integral} together with the bound~\eqref{eq:refined-dcp-yn-larger-intv}. This yields
\begin{equation*}
|\der y_{st}| \gtrsim A_{st}^{1} - A_{st}^{2},
\quad\text{with}\quad
A_{st}^{1} = 6 \cdot 2^{-q_k},
\quad
A_{st}^{2} \le c_{6,x} 2^{-q_k \ka_{\ep_{1},\ep_{2}}}   |t-s|^{\ga} \le c_{9,x} 2^{-q_k \mu_{\ep_{2}}},
\end{equation*}
where we recall that $\ka_{\ep_{1},\ep_{2}}=\ka+\al\ep_{1}-\ep_{2}- \ep_{1}\ep_{2}$ and where we obtain 
\begin{equation*}
\mu_{\ep_{2}}= \ka_{\ep_{1},\ep_{2}} +(\al-\ep_{2})\ga
= 1+\al\ep_{1} -(1+\ga+\ep_{1})\ep_{2}.
\end{equation*}
Our aim is now to prove that $A_{st}^{2}$ can be made negligible with respect to $2^{-q_k}$ when $q_k$ is large enough. This is achieved whenever $\mu_{\ep_{2}}>1$, and this condition can be met by picking $\ep_{1}$ large enough and $\ep_{2}$ small enough.
Summarizing our considerations, we have thus shown that $A_{st}^{1}$  is larger than twice $|J_{q_k}| =3\cdot 2^{-q_k}$ and that $A_{st}^{2}$ is negligible with respect to $A_{st}^{1}$ as $q_k$ gets large. This proves our claim \eqref{eq:upp-bnd-increment-sigma-k}. 

\end{proof}

\subsection{H\"older continuity}

 We shall use the following notation, valid for $\ga\in(0,1)$, a time horizon $t\in [0,T]$ and a function from $[0,t]$ to $\R^{m}$:
\begin{equation}
\|f\|_{\ga,t}
:= \sup_{0\le s < u \le t} \frac{|\delta f_{st}|}{|u-s|^{\ga}},
\quad\text{where}\quad
\delta f_{st} = f_{t}-f_{s}.
\end{equation}
Then, we have the following result, which is our first main objective in this section.

\begin{proposition} \label{th2}
Suppose that $\si$ satisfies Hypothesis \ref{hyp4.1} and that our noise $x$ satisfies Hypotheses~\ref{hyp:reg-x-gamma-gamma1} and~\ref{hyp:roughness}. We also assume that $\ga+\ka>1$.
Then, the function $y$ given in Proposition~\ref{th1} belongs to $\mathcal{C}^\gamma([0,T]; \R^m)$.
\end{proposition}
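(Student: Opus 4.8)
The plan is to prove the global bound $|\der y_{st}|\lesssim|t-s|^{\ga}$ for all $0\le s<t\le T$, by first extracting a pointwise decay estimate as $y$ approaches $\tau$, and then running a case analysis governed by the relative sizes of $t-s$ and $\tau-s$. Since $q_k\to\infty$ under assumption (B), I would fix $k_0$ so that the conclusions of Proposition \ref{prop:bound-diff-sigma-k-2} (in particular \eqref{eq:upp-bnd-increment-sigma-k} and \eqref{eq:a-priori-bnd-yn-whole-interval-lambda}) hold for every $k\ge k_0$. On the compact interval $[0,\la_{k_0}]$ the path $|y|$ is bounded away from $0$, so there $\si$ is Lipschitz and classical Young theory gives $y\in\cac^{\ga}([0,\la_{k_0}])$. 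A triangle-inequality splitting at $\la_{k_0}$ then reduces everything to the H\"older bound for $s,t\in[\la_{k_0},T]$.

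The first genuine step is the pointwise bound $|y_u|\le C_0(\tau-u)^{\ga}$ for every $u\in[0,\tau)$. For $u\in[\la_k,\la_{k+1})$ with $k\ge k_0$ one has $|y_u|\le b_2\,2^{-q_k}$, while the lower bound \eqref{eq:low-bnd-increment-sigma-k} applied to the index $k+1$ gives $\tau-u\ge\la_{k+2}-\la_{k+1}\ge c_{7,x}\,2^{-\al q_{k+1}}$. Using $q_{k+1}\le q_k+1$ and $\al\ga=1-\ka$ this yields $(\tau-u)^{\ga}\gtrsim 2^{-(1-\ka)q_k}\ge 2^{-q_k}\gtrsim|y_u|$, which is the claim (the range $u<\la_{k_0}$ being trivial, as $\tau-u$ is then bounded below). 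With this in hand the case $t\ge\tau$ is immediate: either $\der y_{st}=0$ when $s\ge\tau$, or $|\der y_{st}|=|y_s|\lesssim(\tau-s)^{\ga}\le(t-s)^{\ga}$ when $s<\tau\le t$.

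It remains to handle $\la_{k_0}\le s<t<\tau$, where I would split into two regimes. If $t-s\ge\frac12(\tau-s)$ (a ``large'' increment), I would simply write $|\der y_{st}|\le|y_s|+|y_t|\lesssim(\tau-s)^{\ga}+(\tau-t)^{\ga}\lesssim(t-s)^{\ga}$, invoking the pointwise bound together with $\tau-s\le2(t-s)$ and $\tau-t\le 2(t-s)$. The delicate regime is $t-s<\frac12(\tau-s)$, where $s$ and $t$ may straddle many of the intervals $[\la_i,\la_{i+1})$. Writing $s\in[\la_j,\la_{j+1})$ and $t\in[\la_k,\la_{k+1})$, I would telescope $\der y_{st}$ over these intervals (all of index $\ge k_0$). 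The two partial end increments are each $\lesssim(t-s)^{\ga}$ by \eqref{eq:a-priori-bnd-yn-whole-interval-lambda}, since there $2^{-\ka_{\ep_2}^{-}q}\le1$. For a full interval I would combine \eqref{eq:a-priori-bnd-yn-whole-interval-lambda} with the lower length bound \eqref{eq:low-bnd-increment-sigma-k}, rewritten as $2^{-q_i}\lesssim(\la_{i+1}-\la_i)^{1/\al}$, to obtain $|\der y_{\la_i\la_{i+1}}|\lesssim(\la_{i+1}-\la_i)^{\beta}$ with $\beta=\ga+\ka_{\ep_2}^{-}/\al$. Because $\ga+\ka>1$ forces $\beta>1$ (for $\ep_2$ small, where $\beta\to\ga/(1-\ka)$ as $\ep_2\to0$), the superadditivity inequality $\sum_i a_i^{\beta}\le(\max_i a_i)^{\beta-1}\sum_i a_i$ applies; the total length is $\le t-s$, and the maximal interval length is controlled via the upper bound \eqref{eq:upp-bnd-increment-sigma-k} together with $2^{-q_i}\lesssim(\tau-s)^{1/\al}$ (valid since $\la_i\ge s$ gives $\tau-\la_i\le\tau-s$ while $\tau-\la_i\ge c_{7,x}2^{-\al q_i}$). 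A short exponent bookkeeping, using $t-s\le\tau-s\le T$ and $\ga<1$, then yields $\lesssim(t-s)^{\ga}$.

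The main obstacle is precisely this last summation in the small-increment regime: the dyadic levels $q_i$ are not monotone along the partition, so one cannot sum a geometric series directly; instead the whole estimate must be routed through the interval lengths $\la_{i+1}-\la_i$ and the superadditivity of $x\mapsto x^{\beta}$ with $\beta>1$. Securing $\beta>1$ is exactly where the standing hypothesis $\ga+\ka>1$ enters, and one must also verify that $\ep_2$ can be chosen small enough to meet simultaneously the constraints of Corollary \ref{cor:a-priori-bnd-yn-larger-intv} and Proposition \ref{prop:bound-diff-sigma-k-2} while keeping both $\ka_{\ep_2}^{-}>0$ and $\beta>1$.
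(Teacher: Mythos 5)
Your proof is correct and follows essentially the same route as the paper's: telescoping over the intervals $[\la_i,\la_{i+1}]$, converting the dyadic gain $2^{-q_i\ka_{\ep_2}^{-}}$ into a power of $\la_{i+1}-\la_i$ via the lower bound \eqref{eq:low-bnd-increment-sigma-k}, and summing with an exponent $\beta=\ga+\ka_{\ep_2}^{-}/\al$ (the paper's $\mu_{\ep_2}$) that exceeds $1$ precisely because $\ga+\ka>1$. The only difference is organizational: your pointwise decay bound $|y_u|\lesssim(\tau-u)^{\ga}$ and the large/small-increment dichotomy make explicit the treatment of increments straddling $\tau$ and of the initial segment $[0,\la_{k_0}]$, which the paper handles only implicitly in its final decomposition $\der y_{st}=\der y_{s\la_k}+\der y_{\la_k\la_l}+\der y_{\la_l t}$.
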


\begin{proof}
Case (A) of Proposition \ref{th1} is trivially obtained by Young integration techniques.
Hence it suffices to assume that $y$ satisfies condition (B) in Proposition \ref{th1}.
 Consider first $s=\la_{k}$ and $t=\la_{l}$ with $k<l$. We start by decomposing the increments $|\delta y_{st}|$ as follows
\begin{equation*}
\lln \delta y_{st}  \rrn
\le
\sum_{j=k}^{l-1} \lln \delta y_{\la_{j} \la_{j+1}} \rrn.
\end{equation*}
Then owing to Proposition \ref{prop:bound-diff-sigma-k-2} we have $\la_{k+1}-\la_{k}\le  c_{x,\ep_{2}}  2^{- q_{k} (\al-\ep_{2})}$ for $k$ large enough. We can thus apply Corollary \ref{cor:a-priori-bnd-yn-larger-intv}, which yields
\begin{equation}\label{eq:global-increments-yn-1}
\lln \delta y_{st}  \rrn
\le
\sum_{j=k}^{l-1} \lln \delta y_{\la_{j} \la_{j+1}}  \rrn
\le
c_{5,x}  \sum_{j=k}^{l-1}  2^{-q_{j}\ka_{\ep_{2}}^{-}} |\la_{j+1} - \la_{j}|^{\ga}.
\end{equation}
Furthermore,  inequality \eqref{eq:low-bnd-increment-sigma-k} entails:
\begin{equation*}
2^{-\frac{q_{j} (1-\ka)}{\ga}} \lesssim c_{7,x}^{-1} \lp  \la_{j+1} - \la_{j} \rp
\quad\Longrightarrow\quad
2^{-q_{j} \ka_{\ep_{2}}^{-}} \le 
(c_{7,x})^{-\frac{\ga \ka_{\ep_{2}}^{-}}{1-\ka}}
 \lp  \la_{j+1} - \la_{j} \rp^{\frac{\ga \ka_{\ep_{2}}^{-}}{1-\ka}}.
\end{equation*}
Plugging this information into \eqref{eq:global-increments-yn-1} and setting $c_{10,x}=c_{5,x} (c_{7,x})^{-\frac{\ga \ka_{\ep_{2}}^{-}}{1-\ka}} $,
 we end up with:
\begin{equation*}
\lln \delta y_{st}  \rrn
\le
c_{10,x} 
\sum_{j=k}^{l-1}   |\la_{j+1} - \la_{j}|^{\mu_{\ep_{2}}},
\quad\text{with}\quad
\mu_{\ep_{2}}= \ga\lp 1+  \frac{\ka_{\ep_{2}}^{-}}{1-\ka} \rp.
\end{equation*}
We now wish the exponent $\mu_{\ep_{2}}$ to be of the form $\mu_{\ep_{2}}=1+\ep_{3}$ with $\ep_{3}>0$. Since $\ka_{\ep_{2}}^{-}$ is arbitrarily close to $\ka$, it is readily checked that this can be achieved as long as $\ga+\ka>1$. Recalling that $s=\la_{k}$ and $t=\la_{l}$, one can thus recast the previous inequality as
\begin{equation*}
\lln \delta y_{st}  \rrn
\le
c_{10,x} 
\sum_{j=k}^{l-1}   |\la_{j+1} - \la_{j}|^{1+\ep_{3}}
\le
c_{10,x} |\la_{l} - \la_{k}|^{1+\ep_{3}}
\le 
c_{10,x}  \, \tau^{1+\ep_{3}-\ga} |t-s|^{\ga},
\end{equation*}
which is consistent with our claim.

The general case $s < \la_{k} \le \la_{l} < t$ is treated by decomposing $\delta y_{st}$ as
\begin{equation*}
\delta y_{st} = \delta y_{s \la_{k}}+ \delta y_{\la_{k} \la_{l}}  + \delta y_{\la_{l}t}.
\end{equation*}
Then resort to \eqref{eq:a-priori-bnd-yn-whole-interval-lambda} in order to bound $\delta y_{s \la_{k}}$ and $\delta y_{\la_{l}t}$.
\end{proof}

 The next proposition says that if (B) holds, the function  $y$ can be obtained as the limit of a suitable sequence of Riemann sums.

\begin{proposition}\label{prop:riemann-sums}
Let $y$ be the function  given in Proposition~\ref{th1}.
For all $0\le s < t \le T$, let $\Pi_{st}$ be the set of partitions of $[s,t]$, denoted generically by $\pi=\{ s=t_{0}<\cdots<t_{m}=t$\}. For $\ep>0$ arbitrarily small, define
\begin{equation*}
\Pi_{st}^{\ep}
=
\lcl
\pi\in\Pi_{st}; \text{ there exists }
\js \text{ such that } t_{\js} < \tau \le t_{\js+1} \text{ and } \eta \le |\tau -t_{\js}| \le 2\eta
\rcl,
\end{equation*}
where $\eta=c_{x} \ep^{1/\ga}$ for a strictly positive constant $c_{x}$. Then under the conditions of Proposition~\ref{th2}, one can find $\pi\in\Pi_{st}^{\ep}$ such that:
\begin{equation}\label{eq:lim-riemann-sums}
\lln \ist \si(y_{u}) \, dx_{u}  - \sum_{t_{j}\in\pi}  \si(y_{t_{j}}) \, \delta x_{t_{j}t_{j+1}} \rrn
\le
\ep.
\end{equation}
\end{proposition}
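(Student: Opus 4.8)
The plan is to reduce to the only genuinely delicate configuration $s<\tau\le t$, and then to exploit the fact that $y$ actually \emph{solves} the equation on $[0,\tau)$, which turns the singular integral near $\tau$ into a plain increment of $y$. I would first dispose of the trivial cases: if $t<\tau$ then $[s,t]\subset[0,\tau)$, so $\ist\si(y_u)\,dx_u$ is an ordinary Young integral and \eqref{eq:lim-riemann-sums} is the classical convergence of Young Riemann sums (the constraint defining $\Pi_{st}^{\ep}$ being vacuous); if $s\ge\tau$ then $\si(y_u)=\si(0)=0$ on $[s,t]$, so both the integral and every Riemann sum vanish. Assuming now $s<\tau\le t$, I would record the value of the target integral: since $y$ solves \eqref{eq:sde-power-approx} on each $[s,t']$ with $t'<\tau$ in the Young sense, one has $\int_s^{t'}\si(y)\,dx=\delta y_{st'}=y_{t'}-y_s$; letting $t'\uparrow\tau$ and using $y_\tau=0$ together with the continuity of $y$ gives $\int_s^\tau\si(y)\,dx=-y_s$, and as $\si(y)\equiv0$ on $[\tau,t]$ the target value is simply $\ist\si(y_u)\,dx_u=-y_s$.

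Next I would select the partition. With $\eta=c_x\ep^{1/\ga}$, I pick $\pi\in\Pi_{st}^{\ep}$ whose crossing index $\js$ satisfies $\eta\le\tau-t_{\js}\le2\eta$ and, in addition, $|t_{\js+1}-\tau|\le\eta$ (we are free to place the remaining nodes of $\pi$ on $[\tau,t]$ arbitrarily, as they contribute nothing). Since every term with $t_j\ge\tau$ vanishes, the Riemann sum reads
\begin{equation*}
S_\pi=\sum_{j=0}^{\js-1}\si(y_{t_j})\,\delta x_{t_jt_{j+1}}+\si(y_{t_{\js}})\,\delta x_{t_{\js}t_{\js+1}}.
\end{equation*}
The first sum is an ordinary Young Riemann sum for $\int_s^{t_{\js}}\si(y)\,dx=y_{t_{\js}}-y_s$ on $[s,t_{\js}]\subset[0,\tau)$, where $y$ is bounded away from $0$ (Proposition \ref{th1}(B) gives $y\ne0$ on $[0,\tau)$) so that $\si(y)\in\cac^{\ga}$ and $2\ga>1$; by classical Young convergence I refine $\pi$ on $[s,t_{\js}]$ so that this sum differs from $y_{t_{\js}}-y_s$ by at most $\ep/2$.

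It then remains to control the crossing term. Combining $\ist\si(y_u)\,dx_u=-y_s$ with the previous step yields
\begin{equation*}
\lln \ist\si(y_u)\,dx_u - S_\pi \rrn
\le \frac\ep2 + \lln y_{t_{\js}} + \si(y_{t_{\js}})\,\delta x_{t_{\js}t_{\js+1}} \rrn.
\end{equation*}
Using $y_\tau=0$ and $y\in\cac^{\ga}([0,T])$ from Proposition \ref{th2}, I bound $|y_{t_{\js}}|\le\|y\|_{\ga}|\tau-t_{\js}|^{\ga}\lesssim\eta^{\ga}$, while $|\si(y_{t_{\js}})|\lesssim|y_{t_{\js}}|^{\ka}\lesssim\eta^{\ka\ga}$ (from $\si(0)=0$ and \eqref{eq:dsigma}) and $|\delta x_{t_{\js}t_{\js+1}}|\le\|x\|_{\ga}|t_{\js+1}-t_{\js}|^{\ga}\lesssim\eta^{\ga}$ by the choice of $t_{\js+1}$. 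Hence the bracket is $\lesssim\eta^{\ga}+\eta^{(\ka+1)\ga}=c_x^{\ga}\ep+c_x^{(\ka+1)\ga}\ep^{\ka+1}$, and taking the constant $c_x$ small enough makes this at most $\ep/2$, which finishes the proof.

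The main obstacle — and the place where the earlier analysis pays off — is the crossing term at $t_{\js}$: a naive estimate of $\si(y_{t_{\js}})\,\delta x_{t_{\js}t_{\js+1}}$ with $t_{\js+1}$ far from $\tau$ would only give $O(\ep^{\ka})$, which is too weak since $\ka<1$. The resolution is twofold. First, the dominant error is $|y_{t_{\js}}|\sim\eta^{\ga}\sim\ep$, which is controlled \emph{solely} by the global $\ga$-Hölder continuity of $y$ proved in Proposition \ref{th2} (itself the payoff of the regularity-gain estimates of Section \ref{sec:reg-estimates}), rather than by any Young bound on the singular integral. Second, forcing $t_{\js+1}$ to lie within $O(\eta)$ of $\tau$ upgrades the product term to the harmless order $\ep^{\ka+1}$. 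Everything else rests on the identity $\int_s^{t'}\si(y)\,dx=\delta y_{st'}$ valid on $[0,\tau)$, which replaces the genuinely singular integral by an increment of $y$ and is what makes the argument quantitative.
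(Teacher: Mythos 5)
Your proof is correct and follows essentially the same route as the paper: the same decomposition into the Young Riemann-sum error on $[s,t_{\js}]$ (where $y$ is bounded away from $0$), the increment $|y_{t_{\js}}| = |\delta y_{t_{\js}\tau}|$ controlled via the global $\ga$-H\"older bound of Proposition \ref{th2}, and the crossing product $\si(y_{t_{\js}})\,\delta x_{t_{\js}t_{\js+1}}$. The one place you go beyond the paper is in explicitly forcing $t_{\js+1}$ to lie within $O(\eta)$ of $\tau$ so that the crossing product is of order $\ep^{\ka+1}$ rather than $\ep^{\ka}$ --- a detail the paper dispatches as ``the same kind of elementary considerations,'' but which is genuinely needed since the crossing interval has length at least $\eta$ and so cannot be shrunk by refining the mesh.
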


\begin{proof} Consider a partition $\pi$ lying in $\Pi_{st}^{\ep}$, and set $S_{\pi}=\sum_{t_{i}\in\pi}  \si(y_{t_{i}}) \, \delta x_{t_{i}t_{i+1}}$. Since $y_{u}=0$ for $u\ge\tau$, it is worth noting that we also have
\begin{equation*}
S_{\pi} = S_{\pi^{*}} + \si(y_{t_{\js}}) \, \delta x_{t_{\js}t_{\js+1}},
\quad\text{where}\quad
S_{\pi^{*}} \equiv \sum_{j < \js} \si(y_{t_{j}}) \, \delta x_{t_{j}t_{j+1}}.
\end{equation*}
Then we can write
\begin{equation*}
\lln \delta y_{st} - S_{\pi} \rrn \le 
\lln \delta y_{st_{\js}} - S_{\pi^{*}} \rrn + \lln \delta  y_{t_{\js} \tau} \rrn
+ \lln \si(y_{t_{\js}}) \, \delta x_{t_{\js}t_{\js+1}}  \rrn
:= I_{1} + I_{2} + I_{3}.
\end{equation*}
We now bound separately  the 3 terms on the right hand side above. For the term $I_{2}$ we have
\begin{equation*}
I_{2} \le \| y \|_{\ga} |\tau - t_{\js}|^{\ga} \le c_{x}  (2\eta)^{\ga}.
\end{equation*}
We can obviously choose a constant $c_{x}$ such that if $\eta=c_{x} \ep^{1/\ga}$, then $I_{2}\le\frac{\ep}{3}$. Thanks to the same kind of elementary considerations, we can also make the term $I_{3}$ smaller than $\frac{\ep}{3}$. In order to bound $I_{1}$, we invoke the fact that $|\tau-t_{\js}|\ge \eta$ and we set
\begin{equation*}
Q_{\eta} = \inf \lcl |y_{s}| : \, s < \tau - \eta  \rcl.
\end{equation*}
Observe that $Q_{\eta}>0$. In addition, by Hypothesis \ref{hyp4.1} (ii), $\si$ is  differentiable and locally H\"older continuous of order $\frac 1\gamma -1$ on $[Q_{\eta},\infty)$. By usual convergence of Riemann sums for Young integrals, we thus have
\begin{equation*}
\lim_{\pi\in\Pi_{s t_{\js}}, |\pi|\to 0} I_{1}
=
\lim_{\pi\in\Pi_{s t_{\js}}, |\pi|\to 0} \lln \delta y_{st} - S_{\pi} \rrn 
= 0.
\end{equation*}
Therefore we can choose $|\pi|$ so that $I_{1}\le\frac{\ep}{3}$. Putting together our upper bounds on $I_{1}$, $I_{2}$ and $I_{3}$, the proof of \eqref{eq:lim-riemann-sums} is now finished.
\end{proof}

Finally we can summarize the considerations of this section into the following theorem.

\begin{theorem}\label{thm:exist-multidim}
Consider equation \eqref{eq:sde-power-approx}, and let $T$ be a given strictly positive time horizon. We suppose that Hypothesis \ref{hyp4.1} holds for the coefficient $\si$, and that Hypothesis \ref{hyp:reg-x-gamma-gamma1} and \ref{hyp:roughness} are satisfied for our noise $x$. Then, there exists a continuous function $y$ defined on $[0,T]$ and an instant $\tau \le T$, such that one of the following two possibilities holds:  

\noindent
\emph{(A)}
$\tau =T$, $y$ is nonzero on $[0,T]$,  $y\in\cac^{\ga}([0,T];\R^m)$   and $y$ solves equation  \eqref{eq:sde-power-approx} on $[0,T]$, where the integrals $\int \si^{j}(y_{u}) \, dx_{u}^{j}$ are understood in the usual Young sense.

\noindent
\emph{(B)}
We have $\tau<T$. Then for any $t<\tau$, the path $y$ sits in $\cac^{\ga}([0,T];\R^m)$   and $y$ solves equation  \eqref{eq:sde-power-approx} on $[0,T]$, where the integrals $\int \si^{j}(y_{u}) \, dx_{u}^{j}$ are understood as in Proposition~\ref{prop:riemann-sums}.
Furthermore,  $y_s \not=0$ on   $[0,\tau)$, $\lim_{t\rightarrow \tau} y_t=0$ and $y_t=0$ on the interval $[\tau,T]$.
\end{theorem}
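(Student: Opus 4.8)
The plan is to assemble the theorem from the three results already established in this section, namely Proposition~\ref{th1}, Proposition~\ref{th2} and Proposition~\ref{prop:riemann-sums}. First I would invoke Proposition~\ref{th1}, which under Hypothesis~\ref{hyp4.1} (ii) already produces a continuous function $y$ on $[0,T]$ together with an instant $\tau\le T$ obeying the dichotomy (A)/(B). In case (A) there is nothing to add: the conclusion of option (A) in the theorem is literally the conclusion of option (A) in Proposition~\ref{th1}, and the integrals are genuine Young integrals. Hence the whole content of the theorem lies in upgrading the description of case (B) under the extra assumptions on $x$, that is Hypotheses~\ref{hyp:reg-x-gamma-gamma1} and~\ref{hyp:roughness}, together with $\ga+\ka>1$.

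Next, assuming we are in case (B), I would first promote the local regularity to global regularity. Proposition~\ref{th1} only asserts $y\in\cac^{\ga}([0,t];\R^m)$ for every $t<\tau$, with no uniform control as $t\uparrow\tau$. Proposition~\ref{th2}, whose hypotheses are exactly those of the present theorem, closes this gap and yields $y\in\cac^{\ga}([0,T];\R^m)$ on the whole interval. The vanishing statements, namely $y_{s}\neq0$ on $[0,\tau)$, $\lim_{t\to\tau}y_{t}=0$ and $y_{t}=0$ on $[\tau,T]$, are carried over verbatim from option (B) of Proposition~\ref{th1}; in particular the continuity of $y$ at $\tau$ gives $y_{\tau}=0$.

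It then remains to check that $y$ solves~\eqref{eq:sde-power-approx} on all of $[0,T]$ with the integrals interpreted as limits of Riemann sums. For $t<\tau$ the integrals $\int_{0}^{t}\si^{j}(y_{u})\,dx_{u}^{j}$ are ordinary Young integrals, the equation holds by Proposition~\ref{th1}, and standard convergence of Riemann sums for Young integrals identifies them as such limits. For $t\ge\tau$ I would split each integral as $\int_{0}^{t}=\int_{0}^{\tau}+\int_{\tau}^{t}$. The contribution $\int_{\tau}^{t}\si^{j}(y_{u})\,dx_{u}^{j}$ vanishes because $y\equiv0$ on $[\tau,t]$ and $\si^{j}(0)=0$ by Hypothesis~\ref{hyp:sigma}. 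The contribution on $[0,\tau]$ is precisely the delicate object controlled by Proposition~\ref{prop:riemann-sums}: applying that proposition on $[s,t]$ with $s<\tau\le t$ provides, for every $\ep>0$, a partition in $\Pi_{st}^{\ep}$ along which the Riemann sums approximate $\int_{0}^{\tau}\si^{j}(y_{u})\,dx_{u}^{j}$ to within $\ep$, so this integral is well defined as such a limit. Passing to the limit $t\uparrow\tau$ in $y_{t}=a+\sum_{j}\int_{0}^{t}\si^{j}(y_{u})\,dx_{u}^{j}$ and using $y_{t}\to0$ identifies $\sum_{j}\int_{0}^{\tau}\si^{j}(y_{u})\,dx_{u}^{j}=-a$; combined with the vanishing of the integral past $\tau$ this gives $a+\sum_{j}\int_{0}^{t}\si^{j}(y_{u})\,dx_{u}^{j}=0=y_{t}$ for every $t\ge\tau$, as required.

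The point to handle with care, rather than a genuine obstacle since the hard analysis is already contained in Propositions~\ref{th2} and~\ref{prop:riemann-sums}, is the consistency of the two notions of integral at the junction $t=\tau$: one must ensure that the limit of Riemann sums furnished by Proposition~\ref{prop:riemann-sums} agrees with the value $-a$ forced by continuity, and that the integral genuinely switches off once $y$ hits the origin. Once this reconciliation is performed, the assembled statement is exactly the conclusion of the theorem.
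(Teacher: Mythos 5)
Your proposal is correct and takes essentially the same approach as the paper: the paper offers no separate proof of this theorem, introducing it explicitly as a summary of the section, so its implicit argument is exactly your assembly of Proposition~\ref{th1} (dichotomy and vanishing behavior), Proposition~\ref{th2} (global $\ga$-H\"older regularity in case (B)), and Proposition~\ref{prop:riemann-sums} (interpretation of the integrals as limits of Riemann sums). Your consistency check at $t=\tau$ — that the integral switches off after $\tau$ because $\si(0)=0$, and that $\sum_j\int_0^\tau\si^j(y_u)\,dx_u^j=-a$ — is compatible with the paper, since the proof of Proposition~\ref{prop:riemann-sums} identifies the limit of the Riemann sums with the increment $\delta y_{st}$ itself.
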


\section{Application to fractional Brownian motion}

 Let $B^{H}=\{B_{t}^{H},$ $t\in \lbrack 0,T]\}$ be a standard $d$-dimensional
fractional Brownian motion with \ the Hurst parameter $H\in (\frac{1}{2},1)$
defined on a complete probability space $(\Omega ,\mathcal{F},P)$, that is,
the components of $B^{H}$ are independent centered Gaussian processes with
covariance 
\[
 \mathbf{E}(B_{t}^{H,i}B_{s}^{H,i})=\frac{1}{2}\left( \left| t\right|
^{2H}+\left| s\right| ^{2H}-\left| t-s\right| ^{2H}\right),
\]%
for any $s,t \in [0,T]$.
It is clear that $ \mathbf{E} |B^H_t -B^H_s|^2 = d |t-s|^{2H}$, and, as a consequence, the trajectories of $B^H$ are $\ga$-H\"older continuous for any $\ga <H$.
  Consider
the  $m$-dimensional stochastic differential equation
\begin{equation}  \label{k2}
X_{t}=x_{0}+ \sum_{j=1}^d \int_{0}^{t}\sigma^j
(X_{s})dB_{s}^{H,j} ,\;0\leq t\leq T, 
\end{equation}%
where  $x_{0} \in \R^m$.  
 If $\sigma $ is H\"{o}lder
continuous of order   $\ka >\frac{1}{H}-1$, then, there exists a solution $%
X $ which has H\"{o}lder continuous trajectories of order $\ga $, for any $%
\ga  <H $.  This was proved by Lyons in  \cite{Ly}  using the Young's integral and $p$-variation estimates.
An extension of this result  where there is a measurable drift with linear growth was given by 
  Duncan and Nualart in  \cite{DN}.   Under this weak assumption of $\sigma$ we cannot expect the uniqueness of a solution, which requires $\sigma$ to be differentiable with partial derivatives H\"older continuous of order larger than  $\frac{1}{H}-1$ (see \cite{Ly,NR}).

The results proved in the previous sections  allow us to construct  examples of existence of solutions for equation
(\ref{k2}), when $\si$ is H\"older continuous of order $\ka$ and $\ka < \frac 1H-1$.

\begin{example}
Suppose that $m=d=1$, $x_0=0$  and $\sigma(\xi) =C|\xi|^\ka$, with $\ka <\frac 1H-1$. Then, the process
\[
X_t= \phi^{-1} (B^H_t),
\]
where $\phi(\xi) = \int_0^\xi \frac{dx}{\sigma(x)}$ satisfies equation (\ref{k2}), where the integral is a path-wise integral defined in Proposition  \ref{prop:integral}. Indeed, it suffices to show that the assumptions of Theorem \ref{th1v1} hold. Taking into account that  $\phi^{-1}$ satisfies
\[
{\rm sgn} (\phi^{-1} (\xi)) | (\phi^{-1} (\xi)) |^{1-\ka} = C(1-\ka) \xi,
\]
for any $\xi \in \R$, we get  $| \phi^{-1} (\xi))|  = [C(1-\ka)] ^{\frac 1{1-\ka}} |\xi| ^{\frac 1{1-\ka}}$.
Therefore, for any $\eta <1-\ka$,
\[
\mathbf{E} \int_0^T  |\phi^{-1} (B^H_s)|  ^{-   \eta } ds
=[C(1-\ka)] ^{-\frac \eta{1-\ka}}  \mathbf{E} \int_0^T  |B^H_s|  ^{-\frac \eta{1-\ka}}ds  <\infty.
\]
This implies  $ \int_0^T  |\phi^{-1}  (B^H_s)|  ^{-   \eta } ds <\infty$ almost surely, and we can apply Theorem \ref{th1v1}.
\end{example}

\begin{example}
Consider equation  (\ref{k2}) in the multidimensional case, with $x_0\not =0$. Suppose that  each component $\sigma^j $   satisfies Hypothesis \ref{hyp4.1} with $\ka< \frac 1H-1$ and observe that $B^H$ satisfies Hypotheses \ref{hyp:reg-x-gamma-gamma1} and \ref{hyp:roughness}.
 Then, we can apply Theorems \ref{th1} and  \ref{th2}, and conclude that there exist a  stochastic process $X$  such that, if
\begin{equation*}
\tau   = \inf\lcl t >0 : \, X_{t} = 0 \rcl \wedge T,
\end{equation*}
then,
\[
X_{t} =\left( x_0 +  \sum_{j=1}^{d}\int_{0}^{t} \si^{j}(X_{s}) \, dB_{s}^{H,j}\right) \mathbf{1}_{[0,\tau)} (t),
\]
where for $t<\tau$,  the stochastic integral is understood as  a path-wise Young integral. Moreover,  
the process  $X$ satisfies
$X\in\cac^{\ga}([0,T];\R^m) $ for any $\ga<H$.

\end{example}

\bigskip

\end{document}